\definecolor{ltblue}{RGB}{168,195,251}
\definecolor{ltpink}{RGB}{240,178,169}
\definecolor{ltgreen}{RGB}{191,226,180}
\newtheorem{theorem}{Theorem}[section]
\newtheorem{corollary}[theorem]{Corollary}
\newtheorem{lemma}[theorem]{Lemma}
\theoremstyle{remark}
\newtheorem{definition}[theorem]{Definition}
\newtheorem{remark}[theorem]{Remark}
\newtheorem{example}[theorem]{Example}
\newtheorem{problem}[theorem]{Problem}
\numberwithin{equation}{section}
\begin{document}

\title{Totally symmetric Grassmannian codes}
\author{Matthew~Fickus$^*$ 
\quad Joseph~W.~Iverson$^\dag$ 
\quad John~Jasper$^*$ 
\quad Dustin~G.~Mixon$^\ddag$$^\S$}
\address{$^*$Department of Mathematics and Statistics, Air Force Institute of Technology, Wright-Patterson AFB, OH}
\address{$^\dag$Department of Mathematics, Iowa State University, Ames, IA}
\address{$^\ddag$Department of Mathematics, The Ohio State University, Columbus, OH}
\address{$^\S$Translational Data Analytics Institute, The Ohio State University, Columbus, OH}

\email{jwi@iastate.edu}

\begin{abstract}
We introduce a general technique to construct tight fusion frames with prescribed symmetries.
Applying this technique with a prescription for ``all the symmetries'', we construct a new family of equi-isoclinic tight fusion frames (EITFFs), which consequently form optimal Grassmannian codes.
By virtue of their construction, our EITFFs have the remarkable property of \textit{total symmetry}: any permutation of subspaces can be achieved by an appropriate unitary.
\end{abstract}

\maketitle

%SECTION: INTRODUCTION
\section{Introduction}

This paper\footnote{An early version of this paper appeared on the arXiv with the title ``Equi-isoclinic subspaces from symmetry''. To help distinguish between the two (very different) versions, the authors also changed the title.} concerns a packing problem in Grassmannian space: in a $d$-dimensional ambient space, how can one arrange $n$ subspaces of dimension $r\geq 1$ to be well separated, in the sense that 
any pair of lines through the origin in distinct subspaces casts a wide angle?
To be more precise, assume $d \geq r \geq 1$ and $n \geq 2$, let $V$ be a $d$-dimensional Hilbert space over $\mathbb{F} \in \{ \mathbb{R}, \mathbb{C} \}$, and let $\{W_j\}_{j\in [n]}$ be a sequence of $r$-dimensional subspaces $W_j \leq V$, where $[n]:=\{1,\ldots,n\}$.
For $i,j \in [n]$, consider the smallest angle between a line in $W_i$ and a line in $W_j$, namely, $\theta_{i,j} \in [0,\tfrac{\pi}{2}]$ with
\[
\cos \theta_{i,j}= \operatorname{max} \big\{ | \langle w_i, w_j \rangle | : w_i \in W_i, \, w_j \in W_j, \, \| w_i \| = \| w_j \| = 1 \big\}.
\]
Then the \textbf{spectral distance} is defined as
\[
\operatorname{dist}_s(W_i,W_j) := \sin \theta_{i,j}.
\]
Accounting for all pairs of subspaces in the sequence, the \textbf{spectral coherence} (or \textit{block coherence})
\[
\mu_s(\{W_j\}_{j\in [n]}) := \sqrt{1 - \min_{i \neq j} \operatorname{dist}_s(W_i,W_j)^2 }
= \max_{i \neq j} \cos \theta_{i,j}
\]
is the cosine of the smallest angle between any two lines in distinct subspaces.
The smaller the spectral coherence, the better the subspace separation.

In this paper, we construct subspace ensembles that minimize spectral coherence by attaining the (spectral) \textbf{Welch bound}~\cite{Welch:74,DHST:08,CTX:15}:
\begin{equation}
\label{eq: spectral welch}
\mu_s(\{W_j\}_{j\in [n]}) \geq \sqrt{ \tfrac{nr - d}{d(n-1)} },
\end{equation}
which holds whenever $nr \geq d$ (a safe assumption since we can replace $V$ with $W_1+\dotsb+W_n$).
To describe conditions for equality, let $P_j$ denote orthogonal projection onto $W_j$ for $j \in [n]$.
Then equality holds in~\eqref{eq: spectral welch} if and only if
	\begin{itemize}
	\item[(i)]
	$\exists \, \alpha$ s.t.\ for all $i \neq j$ in $[n]$, every $w \in W_j$ satisfies $\| P_i w \| = \alpha \| w \|$, and
	\item[(ii)]
	$\exists \, c$ s.t.\ $P_1+\dotsb+P_n = c I$.
	\end{itemize}
When (i) holds, $\{W_j\}_{j \in [n]}$ is called \textbf{equi-isoclinic} with parameter~$\alpha$; when (ii) holds, $\{W_j\}_{j\in [n]}$ is called a \textbf{tight fusion frame}.\footnote{Warning: some texts instead define $\alpha^2$ to be the isoclinism parameter.}
When both conditions hold, so that the Welch bound is attained, $\{W_j\}_{j \in[n]}$ is called an \textbf{equi-isoclinic tight fusion frame}, abbreviated $\operatorname{EITFF}_{\mathbb{F}}(d,r,n)$; then the isoclinism parameter is necessarily
\begin{equation}
\label{eq: isoclinism parameter}
\alpha = \sqrt{ \tfrac{nr - d}{d(n-1)} },
\end{equation}
and the fusion frame constant is necessarily $c = \tfrac{nr}{d}$.

To reiterate, EITFFs are optimal Grassmannian codes with respect to spectral distance.
However, they are difficult to construct.
The case $r=1$ is understood best; in this regime, an $\operatorname{EITFF}_{\mathbb{F}}(d,1,n)$ is also known as an \textbf{equiangular tight frame}, abbreviated $\operatorname{ETF}_{\mathbb{F}}(d,n)$.
ETFs form optimal projective codes, and many constructions are known~\cite{FM:15}.
Next, a simple tensor construction converts any $\operatorname{EITFF}_{\mathbb{F}}(d_0,r_0,n)$ into an $\operatorname{EITFF}_{\mathbb{F}}(d_0m,r_0m,n)$ for any choice of $m \geq 1$~\cite{LS:73a,CTX:15,FMW:21}.
In particular, an $\operatorname{ETF}_{\mathbb{F}}(d_0,n)$ can be used to create an $\operatorname{EITFF}_{\mathbb{F}}(d_0m,m,n)$ for any $m \geq 1$.
While ETFs are themselves rare, this technique produces an $\operatorname{EITFF}_{\mathbb{F}}(d,r,n)$ for a variety of parameters, all with the property that $r$ divides $d$ and an $\operatorname{ETF}_{\mathbb{F}}(\tfrac{d}{r},n)$ exists.
It is thus of interest to find an $\operatorname{EITFF}_{\mathbb{F}}(d,r,n)$ with $\tfrac{d}{r} \notin \mathbb{Z}$.
Some examples are known~\cite{GH:92,ET:06,ETR:09,IKM:21,FIJM:22,FIJM:23}, but to the authors' knowledge, all but finitely many of them have $\tfrac{d}{r} \in \tfrac{1}{2} + \mathbb{Z}$ since they ultimately derive from an $\operatorname{EITFF}_{\mathbb{F}}(d_0,2,n)$ with $d_0$ odd.
The main result of this paper, Theorem~\ref{thm: single layer}, provides infinitely many new examples by constructing an $\operatorname{EITFF}_{\mathbb{F}}(d,r,n)$ for infinitely many choices of the coset $\tfrac{d}{r} + \mathbb{Z}$.

While we are primarily motivated by the Grassmannian packing problem above, our results have a secondary impact for algebraic combinatorics, which has a historical interest in equi-isoclinic subspaces~\cite{LS:73a}.
The main problem in this area is to determine the maximum number of equi-isoclinic $r$-dimensional subspaces with isoclinism parameter $\alpha$ in an ambient space of dimension $d$ over $\mathbb{F}$.
Denoting $N_\mathbb{F}(d,r,\alpha)$ for this quantity, the \textbf{relative bound}~\cite{LS:73a} states that
\[
N_\mathbb{F}(d,r,\alpha) \leq \tfrac{d(1-\alpha^2)}{r-d\alpha^2}=:n,
\]
with equality if and only if an $\operatorname{EITFF}_{\mathbb{F}}(d,r,n)$ exists.
Since Theorem~\ref{thm: single layer} produces infinitely many new EITFF parameters, it resolves infinitely many cases of this problem.

Our technique pulls a classic maneuver from both the packing and algebraic combinatorics playbooks: take advantage of symmetry.
To elaborate, the \textbf{automorphism group} of a subspace ensemble $\{W_j\}_{j \in [n]}$ consists of all permutations $\sigma \in S_n$ for which there exists a unitary $U$ such that $UW_j = W_{\sigma(j)}$ for every $j\in [n]$.
Each EITFF constructed by Theorem~\ref{thm: single layer} is \textbf{totally symmetric}, meaning its automorphism group is all of $S_n$.\footnote{A preliminary version of this result appeared in the conference proceeding~\cite{FIJM:ICASSP}.}
More broadly, Theorem~\ref{thm: one layer general group} provides a very general way to build a tight fusion frame with prescribed automorphisms, and the EITFFs of Theorem~\ref{thm: single layer} are instances of this construction, which ultimately relies on representation theory.
In Subsection~\ref{subsec: exs}, we use Theorem~\ref{thm: one layer general group} to build some other sporadic examples of new EITFFs with remarkable symmetry.

Given the definition of equi-isoclinism, it might not be surprising to learn that symmetry plays a role in the construction of some EITFFs.
In fact, our results might be seen as part of a bigger story about symmetry interacting with optimality.
Fejes T\'{o}th observed this interplay in his work on regular figures~\cite{FT:64}, and Conway, Hardin, and Sloane noted it again in their seminal paper on Grassmannian codes~\cite{CHS:96}.
For projective codes, symmetry implies optimality since any ensemble of $n>d$ lines spanning $\mathbb{F}^d$ with a doubly transitive automorphism group forms an ETF~\cite{C:18,IM:DTI,IM:DTII,DK:23}.
In the other direction, optimality implies symmetry for any $\operatorname{EITFF}_{\mathbb{F}}(d,r,n)$ with $\tfrac{d}{r} = 2$, since its automorphism group necessarily contains $A_n$~\cite{FGLI:24}.
(None of the constructions in this paper have $\tfrac{d}{r} = 2$.)
By comparison with these implications, one might say that symmetry \textit{facilitates} optimality in this paper, since a transitive permutation group is the first ingredient in the construction of Theorem~\ref{thm: single layer}.
This follows an established theme in the literature.
Modern frame theory was founded on group-theoretic techniques~\cite{DGM:86}; more recently, results akin to Theorem~\ref{thm: single layer} appear for tight fusion frames with abelian symmetry in~\cite{FIJM:23} and for tight frames with prescribed symmetry in~\cite{IJM:20}.
Furthermore, symmetry is famously conjectured to facilitate optimality in \textit{Zauner's conjecture} on the existence of an $\operatorname{ETF}_{\mathbb{C}}(d,d^2)$ for every~$d\geq1$~\cite{Z:99,RBSC:04}.
(Interestingly, symmetry may also \textit{obstruct} optimality for projective codes, since any $\operatorname{ETF}_{\mathbb{F}}(d,n)$ with a triply transitive automorphism group is necessarily a trivial construction with $d \in \{1,n-1,n\}$~\cite{K:19,IM:DTII}.
As our results demonstrate, this obstruction evaporates for EITFFs with higher-dimensional subspaces.)

Here, we use symmetry as a means of constructing optimal Grassmannian codes, but its presence in our constructions has the potential for further applications in \textit{block compressed sensing}~\cite{T:04,EKB:10,CTX:15}.
In detail, let $\{ W_j \}_{j \in [n]}$ be as above, and for each $S \subseteq [n]$, define $\delta_S$ as the largest distance between~1 and any of the top $r|S|$ eigenvalues of $\sum_{j \in S} P_j$.
Then for $k \leq \tfrac{d}{r}$, the $k$th \textit{block restricted isometry constant} may be expressed as
\[
\delta_k := \max_{\substack{S \subseteq [n], \\ |S| = k}} \delta_S.
\]
(This is not the usual definition, but it is equivalent.)
In settings where $\delta_k$ is sufficiently small, the Grassmannian code allows one to recover arbitrary block-sparse signals of block sparsity $k$ from $d$ linear measurements that are determined by the code. 
Unfortunately, it is computationally prohibitive to compute $\delta_k$ in general, and so researchers have turned to random matrix theory to close the gap. 
However, if the code is totally symmetric, then $\delta_S$ is the same for every $S \subseteq [n]$ of size $k$, and so $\delta_k$ can be computed with ease. 
While this is part of our motivation, we leave its investigation for follow-up work.

The paper is organized as follows.
Following preliminaries in Section~\ref{sec: preliminaries}, we provide our general construction of tight fusion frames with prescribed automorphisms in Section~\ref{sec: tight fusion frames from symmetry}.
This result has two versions: the ``single layer'' version of Theorem~\ref{thm: one layer general group} is simpler and uses irreducible representations, while the ``multiple layer'' version of Theorem~\ref{thm: multiple layers general group} is more general and allows for reducible (but still multiplicity-free) representations.
The remainder of the paper interprets the constructions of Section~\ref{sec: tight fusion frames from symmetry} for EITFFs prescribed to have total symmetry.
Section~\ref{sec: Sn review} reviews the necessary representation theory of symmetric groups, 
and then Section~\ref{sec: total symmetry one layer} uses this theory to build totally symmetric EITFFs from the ``single layer'' construction (Theorem~\ref{thm: single layer}).
Finally, Section~\ref{sec: total symmetry multiple layers} develops the theory of totally symmetric EITFFs from multiple-layer constructions, leading to infinitely many additional examples (Theorems~\ref{thm: three parts} and~\ref{thm: four parts}).

%SECTION: PRELIMINARIES
\section{Preliminaries}
\label{sec: preliminaries}

We begin by establishing notation and terminology.

%Subsection: Group actions
\subsection{Group actions}

The reader may consult~\cite{DM:96} for background on group actions.
We use the following conventions.
Given a set $X$, we write $S_X$ for the group of all bijections $g \colon X \to X$, where the group operation is given by composition, with functions on the \textit{right} acting first: $fg = f \circ g$.
The \textbf{symmetric group} on $n$ points is $S_n := S_{[n]}$.
We adopt the usual cycle notation for its elements, where as above permutations on the right act first.
Thus, $(1\, 2)(2\, 3) = (1\, 2\, 3)$.
(Warning: GAP~\cite{gap} implements the opposite group operation!)
Whenever it is convenient we abuse notation and consider $S_{n-1}$ to be a subgroup of $S_n$, where elements of $S_{n-1}$ fix the point $n$ and permute the elements of $[n-1] \subset [n]$ as expected.

For a group $G$ and a set $X$, a (left) \textbf{group action} of $G$ on $X$ amounts to a group homomorphism $\sigma \colon G \to S_X$.
Given such an action, we typically suppress $\sigma$ in our notation and abbreviate $g x = \sigma(g)x$ for $g \in G$ and $x \in X$.
The action is called \textbf{faithful} if $\sigma$ is injective, i.e., no nontrivial element of $G$ fixes every element of $X$.
For non-faithful actions the image $\sigma(G) \leq S_X$ is isomorphic to $G/\ker \sigma$.
The \textbf{orbit} of a point $x \in X$ is the subset $G x := \{ g  x : g \in G\} \subseteq X$; the orbits partition $X$.
The action is \textbf{transitive} if $X = G x$ for some (hence any) $x \in X$, or equivalently, for every $x,y \in X$ there exists $g \in G$ with $g  x = y$.
In particular, if $G$ acts transitively on $X$ then for any choice of $x_0 \in X$ there exists a \textbf{transversal} $\{t_x\}_{x\in X}$ of elements in $G$ such that $t_x x_0 = x$ for every $x \in X$.
The \textbf{stabilizer} of a point $x \in X$ is the subgroup $G_x := \{ g \in G : g  x = x \} \leq G$.
Given any subgroup $H \leq G$, $G$ acts transitively on the left cosets $G/H$ via $g  (hG) = ghG$, where the stabilizer of the coset $H \in G/H$ is the subgroup $H\leq G$.
Up to equivalence, this accounts for all transitive group actions: if $G$ acts transitively on $X$ then for any choice of $x_0 \in X$ the mapping $f \colon G/G_{x_0} \to X$ given by $f(gH) = gx_0$ is a well-defined bijection that satisfies $f(hgG_{x_0})=hf(g G_{x_0})$ for every $h \in G$ and $gG_{x_0} \in G/G_{x_0}$.
The action of $G$ on $X$ is \textbf{regular} if it is transitive and some (hence every) stabilizer $G_{x_0}$ is trivial; equivalently, the mapping $g \mapsto g  x_0$ gives a bijection $G \to X$.
We say $G$ acts \textbf{doubly homogeneously} on $X$ if for every two unordered pairs $\{x,y\},\{x',y'\} \subseteq X$ with $x \neq y$ and $x' \neq y'$ there exists $g \in G$ such that $\{g  x, g  y\} = \{ x', y' \}$.
The action is \textbf{doubly transitive} if for every two ordered pairs $(x,y)$ and $(x',y')$ of elements in $X$ with $x \neq y$ and $x' \neq y'$ there exists $g \in G$ such that $(g  x, g  y) = ( x', y' )$.

%Subsection: subspace ensembles
\subsection{Subspace ensembles}

Assume $d \geq 1$, and let $V$ be a $d$-dimensional Hilbert space over $\mathbb{F} \in \{ \mathbb{R}, \mathbb{C} \}$, where the inner product $\langle \cdot , \cdot \rangle$ is linear in the first variable and either linear or conjugate-linear in the second variable, according as $\mathbb{F} = \mathbb{R}$ or $\mathbb{C}$.
We use the following terminology for an ensemble $\mathcal{W} = \{ W_j \}_{j\in [n]}$ of $r$-dimensional subspaces $W_j \leq V$, where $n,r \geq 1$.
For each $j \in [n]$, let $P_j \colon V \to V$ give orthogonal projection onto $W_j$.
Then $S = \sum_{j=1}^n P_j$ is the \textbf{fusion frame operator} of $\mathcal{W}$, which equals a scalar multiple of the identity if and only if $\mathcal{W}$ forms a tight fusion frame for $V$, abbreviated $\operatorname{TFF}_{\mathbb{F}}(d,r,n)$.
(No confusion should arise between the fusion frame operator $S$ and the symmetric group $S_n$.)
Next, fix an $r$-dimensional Hilbert space $W$ over $\mathbb{F}$, and choose linear isometries $\Phi_j \colon W \to V$ with $\operatorname{im} \Phi_j = W_j$ for each $j \in [n]$, so that each $P_j =  \Phi_j \Phi_j^*$.
Given $i,j \in [n]$, $\Phi_i^* \Phi_j \colon W \to W$ is called a \textbf{cross-Gram operator} for $W_i$ and $W_j$.
By fixing an orthonormal basis for $W$, we may identify $\Phi_i^* \Phi_j$ with an $r \times r$ matrix, called a \textbf{cross-Gram matrix} for $W_i$ and $W_j$.
Now fix an orthonormal basis for $V$.
Then each $\Phi_j$ may be identified with a $d \times r$ matrix whose columns form an orthonormal basis for $W_j$, and $\mathcal{W}$ can be described by a \textbf{fusion synthesis matrix} $\Phi := \left[ \begin{array}{ccc} \Phi_1 & \cdots & \Phi_n \end{array} \right] \in \mathbb{F}^{d \times nr}$.
Here, $\Phi \Phi^* \in \mathbb{F}^{d\times d}$ is the matrix of the fusion frame operator $S$, while $\Phi^*\Phi \in \mathbb{F}^{nr \times nr}$ is known as a \textbf{fusion Gram matrix} for $\mathcal{W}$ (note it is not unique).
Specifically, $\Phi^* \Phi$ is the $n \times n$ block array whose $(i,j)$ block is the cross-Gram matrix $\Phi_i^* \Phi_j \in \mathbb{F}^{r\times r}$.
Since $\Phi \Phi^*$ and $\Phi^* \Phi$ have the same nonzero eigenvalues with multiplicities, $\mathcal{W}$ is a tight fusion frame for $V$ if and only if $\Phi^* \Phi$ has exactly one nonzero eigenvalue $\lambda$ occurring with multiplicity~$d$.
Taking a trace shows the only option is $\lambda = \tfrac{nr}{d}$.
Furthermore, if $\mathcal{W}$ is a tight fusion frame and $d < nr$ then $\tfrac{nr}{nr-d}(I-\tfrac{d}{nr} \Phi^* \Phi)$ is the fusion Gram matrix of some $\operatorname{TFF}_{\mathbb{F}}(nr-d,r,n)$, called a \textbf{Naimark complement} for $\mathcal{W}$.
In the case where $\mathcal{W}$ is an $\operatorname{EITFF}_{\mathbb{F}}(d,r,n)$, any Naimark complement of $\mathcal{W}$ is an $\operatorname{EITFF}_{\mathbb{F}}(nr-d,r,n)$.

Given another $d$-dimensional Hilbert space $V'$ over $\mathbb{F}$ and another ensemble $\mathcal{W}' = \{ W_j' \}_{j\in [n]}$ of $r$-dimensional subspaces $W_j' \leq V'$, we say $\mathcal{W}$ and $\mathcal{W}'$ are \textbf{unitarily equivalent} if there exists a unitary $U \colon V \to V'$ with $U W_j = W_j'$ for every $j \in [n]$.
Equivalently, $\mathcal{W}$ and $\mathcal{W}'$ are unitarily equivalent if and only if some fusion Gram matrix for $\mathcal{W}$ is also a fusion Gram matrix for $\mathcal{W}'$.
In particular, if $\mathcal{W}$ is a tight fusion frame and $d < nr$ then any two Naimark complements for $\mathcal{W}$ are unitarily equivalent.
For this reason, we often refer to ``the'' Naimark complement of a tight fusion frame.

Many notions of distance in Grassmannian space can be expressed in terms of the \textbf{principal angles} between subspaces $W_i$ and $W_j$, defined as follows.
The first principal angle is the sharpest angle achieved between some unit vectors $w_i^{(1)} \in W_i$ and $w_j^{(1)} \in W_j$; explicitly,
\[
\theta_{i,j,1} := \min \big\{ \arccos | \langle w_i^{(1)}, w_j^{(1)} \rangle | : w_\ell^{(1)} \in W_\ell, \, \| w_\ell^{(1)} \| =  1, \, \ell \in \{i,j\} \big\}.
\]
The second principal angle $\theta_{i,j,2}$ is then the sharpest angle achieved between unit vectors $w_i^{(2)} \in W_i \ominus \operatorname{span}\{w_i^{(1)}\}$ and $w_j^{(2)} \in W_j \ominus \operatorname{span}\{w_j^{(1)}\}$ in the orthogonal complements, and so on, for a total of $r$ principal angles $\theta_{i,j,1},\dotsc,\theta_{i,j,r}$ between $W_i$ and $W_j$.
These can be expressed in terms of singular values of any cross-Gram operator $\Phi_i^* \Phi_j$, where each $\theta_{i,j,k} = \arccos \sigma_k(\Phi_i^* \Phi_j)$.
The subspaces $W_i$ and $W_j$ are called \textbf{isoclinic} with parameter $\alpha$ if $\theta_{i,j,k} = \arccos \alpha$ for every $k \in [r]$, that is, $(\Phi_i^* \Phi_j)^*(\Phi_i^* \Phi_j) = \alpha^2 I$; one can show this is equivalent to the condition that $\| \Phi_i \Phi_i^* w \| = \alpha \|w \|$ for every $w \in W_j$.
(Thus, ``equi-isoclinic with parameter $\alpha$'' means ``pairwise isoclinic with parameter $\alpha$''.)
In addition to spectral distance, we will also consider the \textbf{chordal distance} between subspaces, defined as
\[
\operatorname{dist}_c(W_i,W_j) := \sqrt{ \sum_{k \in [r]} \sin^2 \theta_{i,j,k} } = \sqrt{ r - \| \Phi_i^* \Phi_j \|_F^2 },
\]
where $\| \cdot \|_F$ indicates Frobenius norm.
The spectral distance can be expressed similarly as
\[
\operatorname{dist}_s(W_i,W_j) = \min_{k \in [r]} \sin \theta_{i,j,k} = \sqrt{ 1 - \| \Phi_i^* \Phi_j \|^2_{\text{op}} },
\]
where $\| \cdot \|_{\text{op}}$ indicates operator norm.
The minimum chordal distance obeys the (chordal) \textbf{Welch bound}~\cite{Welch:74,CHS:96,DHST:08}, which states that
\begin{equation}
\label{eq: chordal welch}
\min_{i \neq j} \operatorname{dist}_c(W_i,W_j) \leq \sqrt{ \frac{ nr(d-r) }{d(n-1)} }.
\end{equation}
Equality holds in~\eqref{eq: chordal welch} if and only if $\{ W_j \}_{j \in [n]}$ is an \textbf{equi-chordal tight fusion frame} for $V$, abbreviated $\operatorname{ECTFF}_{\mathbb{F}}(d,r,n)$, meaning $\{ W_j \}_{j \in [n]}$ is a tight fusion frame and $\operatorname{dist}_c(W_i,W_j)$ is equal across all pairs $i \neq j$.
When $r = 1$, the notions of ECTFF and EITFF coincide, and each is equivalent to the notion of an equiangular tight frame.
When $r > 1$, every EITFF is an ECTFF, but not vice versa.
As with EITFFs, any Naimark complement of an $\operatorname{ECTFF}_{\mathbb{F}}(d,r,n)$ is an $\operatorname{ECTFF}_{\mathbb{F}}(nr-d,r,n)$.

In addition to the notation above, we will find it convenient to work with subspace ensembles indexed by an arbitrary finite set $X$, not necessarily $[n]$.
Then similar notation and terminology apply.
In particular, the automorphism group of an ensemble $\{ W_x \}_{x\in X}$ of subspaces $W_x \leq V$ is the subgroup $\operatorname{Aut} (\{ W_x \}_{x \in X}) \leq S_X$ consisting of all permutations $\sigma$ for which there exists a unitary $U \in \operatorname{U}(V)$ satisfying $U W_x = W_{\sigma(x)}$ for every $x \in X$.

%Subsection: Representation theory
\subsection{Representation theory}

The reader may consult~\cite{CSST:18,CSST:10,S:77} for background on representation theory.
Fix a finite group $G$ and a field $\mathbb{F} \in \{ \mathbb{R}, \mathbb{C} \}$.
A (unitary) \textbf{representation} of $G$ is a group homomorphism $\rho \colon G \to \operatorname{U}(V)$, where $V$ is a finite-dimensional Hilbert space over $\mathbb{F}$.
Its \textbf{degree} is the dimension of $V$.
A subspace $W \leq V$ is \textbf{invariant} for $\rho(G)$ if $\rho(g) W = W$ for every $g \in G$; equivalently, orthogonal projection onto $W$ commutes with $\rho(g)$ for every $g \in G$.
Any invariant subspace $W \leq V$ gives rise to another representation $G \to \operatorname{U}(W)$ by restricting the domain and codomain of each $\rho(g)$ to $W$.
We say that $\rho$ is \textbf{reducible} if there exists a proper nontrivial invariant subspace $W \leq V$; otherwise it is \textbf{irreducible}.

Next, let $W$ be another Hilbert space over $\mathbb{F}$, and let $\pi \colon G \to \operatorname{U}(W)$ be another representation of $G$.
A linear map $L \colon W \to V$ \textbf{intertwines} $\pi$ and $\rho$ if $L \pi(g) = \rho(g)L $ for every $g \in G$.
If there exists an intertwining isometry $W \to V$, then we call $\pi$ a \textbf{constituent} of $\rho$.
We say $\pi$ and $\rho$ are \textbf{equivalent}, written $\pi \cong \rho$, if there exists an intertwining unitary $W \to V$; otherwise they are \textbf{inequivalent}.
In case $\pi$ is irreducible, its \textbf{isotypic component} in $\rho$ is the sum of all images of intertwining isometries $W \to V$.
(If no such isometries exist, i.e.\ $\pi$ is not a constituent of $\rho$, then the isotypic component is defined to be $\{0\}$.)
This is an invariant subspace for $\rho(G)$.
Part of \textbf{Schur's Lemma} says that if $\pi$ and $\rho$ are inequivalent and irreducible, then any linear map intertwining them is constantly~$0$.

Let $X$ be a finite set, let $\{ V_x \}_{x \in X}$ be a collection of finite-dimensional Hilbert spaces over $\mathbb{F}$, and let $\{ \pi_x \}_{x\in X}$ be a collection of representations $\pi_x \colon G \to \operatorname{U}(V_x)$.
Their \textbf{direct sum} $\bigoplus_{x\in X} \pi_x$ is the representation of $G$ on the orthogonal direct sum $\bigoplus_{x \in X} V_x$ given by $g \mapsto \bigoplus_{x\in X} \pi_x(g)$ for $g \in G$.
It is always possible to decompose $\rho$ as a direct sum of irreducible representations.
(Here and throughout, we freely conflate internal and external direct sums of Hilbert spaces.)
If the irreducible representations in some (hence any) such decomposition are pairwise inequivalent, then $\rho$ is called \textbf{multiplicity free}.

Finally, in the case where $\mathbb{F} = \mathbb{R}$, $\rho$ extends to a representation $\rho'$ on the complexification $V' \supset V$ of $V$, where any orthonormal basis for $V$ (over $\mathbb{R}$) is also an orthonormal basis for $V'$ (over $\mathbb{C}$), and the action of each $\rho'(g)$ on this orthonormal basis is identical.
Put differently, the matrix of each $\rho'(g)$ is exactly the matrix of $\rho(g)$, now understood to have complex entries.
We say $\rho$ is \textbf{absolutely irreducible} if $\rho'$ is irreducible.
Any irreducible representation over $\mathbb{C}$ is also called absolutely irreducible.

%SECTION: TIGHT FUSION FRAMES FROM SYMMETRY
\section{Tight fusion frames from symmetry}
\label{sec: tight fusion frames from symmetry}

In this section, we develop our main technical tools for constructing tight fusion frames with prescribed transitive symmetry.
We have two such tools: Theorem~\ref{thm: one layer general group} implements a simpler ``single layer'' construction based on irreducible representations, while Theorem~\ref{thm: multiple layers general group} implements a more flexible ``multiple layer'' construction based on multiplicity-free representations.
We begin with the simpler of the two.

%Subsection: One layer
\subsection{One layer}

As inspiration for the following, let $G \leq S_X$ be contained in the automorphism group of an ensemble $\{ W_x \}_{x\in X}$ of subspaces of $V$.
Then for each $\sigma \in G$, there exists a unitary $U \in \operatorname{U}(V)$ such that $U W_x = W_{\sigma(x)}$ for every $x \in X$.
In particular, if $\sigma$ belongs to the stabilizer of a point $x_0 \in X$ then $U$ must hold $W_{x_0}$ invariant.
This suggests a connection with representation theory, which we leverage as a source of unitaries below.

\begin{theorem}
\label{thm: one layer general group}
Let $G$ be a finite group acting transitively on a set $X$ via $\sigma \colon G \to S_X$.
Fix a base point $x_0 \in X$ with stabilizer $H \leq G$, and select a transversal $\{ t_x \}_{x\in X}$ of elements in $G$ such that $\sigma(t_x) x_0 = x$ for each $x \in X$.
Next, let $\rho \colon G \to \operatorname{U}(\mathbb{F}^d)$ be an irreducible representation, where $\mathbb{F} \in \{ \mathbb{R},\mathbb{C}\}$, and suppose $W \leq \mathbb{F}^d$ is a subspace held invariant by $\rho(H)$.
Then the orbit $\mathcal{W} := \{ \rho(t_x) W \}_{x\in X}$ is a tight fusion frame whose automorphism group contains $\sigma(G) \leq S_X$.

Furthermore, if $G$ acts doubly homogeneously on $X$ then the following hold:
\begin{itemize}
\item[(a)]
$\mathcal{W}$ is an ECTFF, and
\item[(b)]
when $|X| \geq 2$, $\mathcal{W}$ is an EITFF if and only if there exists $g \in G \setminus H$ for which $W$ and $\rho(g)W$ are isoclinic.
\end{itemize}
\end{theorem}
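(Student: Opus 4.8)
The plan is to establish the basic tight-fusion-frame claim first, then leverage double homogeneity for parts (a) and (b). Let me sketch my approach.

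The plan is to work throughout with orthogonal projections, writing $P$ for the projection onto $W$ and $P_x$ for the projection onto $W_x = \rho(t_x)W$, so that $P_x = \rho(t_x) P \rho(t_x)^*$. First I would prove the tight fusion frame claim via Schur's Lemma. The key bookkeeping is that for any $g \in G$ the element $g t_x$ sends $x_0$ to $\sigma(g)x$, hence $g t_x = t_{\sigma(g)x}\, h$ for some $h \in H$; since $W$ is $\rho(H)$-invariant, $\rho(h)$ commutes with $P$ and so $\rho(h)P\rho(h)^* = P$. Combining these gives $\rho(g t_x) P \rho(g t_x)^* = P_{\sigma(g)x}$, so conjugating the fusion frame operator $S = \sum_{x\in X} P_x$ by $\rho(g)$ merely permutes the summands: $\rho(g)S\rho(g)^* = \sum_{x} P_{\sigma(g)x} = S$. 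As $\rho$ is irreducible, Schur forces $S = \lambda I$, whence $\mathcal{W}$ is a TFF (the scalar being $\tfrac{r|X|}{d}$ by a trace, as in the preliminaries). The same coset identity yields $\rho(g)W_x = W_{\sigma(g)x}$, which simultaneously shows $\sigma(G) \leq \operatorname{Aut}(\mathcal{W})$ and records the equivariance $P_{\sigma(g)x} = \rho(g)P_x\rho(g)^*$ that I will reuse. I also note $t_{x_0}\in H$, so $W_{x_0} = W$.

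For the ``furthermore'' claims I would first translate the relevant distances into projections. Using that each $\rho(g)$ is unitary together with the equivariance above, $\operatorname{tr}(P_{\sigma(g)x}P_{\sigma(g)y}) = \operatorname{tr}(P_xP_y)$, and via the identity $P_iP_jP_i = \alpha P_i$ characterizing $\alpha$-isoclinism (equivalent to $(\Phi_i^*\Phi_j)^*(\Phi_i^*\Phi_j)=\alpha I$), the pair $\{x,y\}$ is $\alpha$-isoclinic if and only if $\{\sigma(g)x,\sigma(g)y\}$ is. Since double homogeneity makes every unordered pair of distinct points $G$-equivalent to every other, the quantity $\operatorname{tr}(P_xP_y) = \|\Phi_x^*\Phi_y\|_F^2$ is constant across pairs, so the chordal distances agree and $\mathcal{W}$ is an ECTFF, giving (a).

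For (b) I would push the same equivalence further: because the $G$-action preserves both the property of being isoclinic and the parameter $\alpha$, and because all pairs lie in one orbit, the situation is all-or-nothing: either every pair is $\alpha$-isoclinic for one common $\alpha$ (so $\mathcal{W}$ is equi-isoclinic, hence an EITFF by the TFF claim) or no pair is isoclinic. Thus $\mathcal{W}$ is an EITFF exactly when some pair is isoclinic. Finally I would match this with the stated criterion through the base point: the pair $\{x_0,x\}$ with $x \neq x_0$ is isoclinic precisely when $W = W_{x_0}$ and $W_x = \rho(t_x)W$ are, where $t_x \in G\setminus H$; conversely any $g \in G\setminus H$ with $W$ and $\rho(g)W$ isoclinic produces the isoclinic pair $\{x_0,\sigma(g)x_0\}$, since $\rho(g)W = W_{\sigma(g)x_0}$ and $\sigma(g)x_0 \neq x_0$. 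The hypothesis $|X|\ge 2$ guarantees both that a pair exists and that $G \setminus H \neq \emptyset$, so the two conditions coincide.

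The step I expect to be the main obstacle is the dichotomy in (b): one must ensure that double homogeneity transports not merely the isoclinic property but the parameter $\alpha$ itself, so that ``some pair isoclinic'' strengthens to genuinely equi-isoclinic rather than ``each pair isoclinic with its own parameter.'' This hinges on verifying that $P_iP_jP_i = \alpha P_i$ is a faithful, parameter-preserving, and (importantly) symmetric characterization of $\alpha$-isoclinism, together with the relation $P_{\sigma(g)x}P_{\sigma(g)y}P_{\sigma(g)x} = \rho(g)\,(P_xP_yP_x)\,\rho(g)^*$. By contrast, the TFF/Schur argument is routine once the coset identity $g t_x = t_{\sigma(g)x}h$ is in hand.
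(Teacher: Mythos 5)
Your proposal is correct and follows essentially the same route as the paper: the coset identity $gt_x = t_{\sigma(g)x}h$ with $h \in H$ yields $\rho(g)P_x\rho(g)^* = P_{\sigma(g)x}$, irreducibility gives tightness, and double homogeneity transports unordered pairs (hence chordal distances and isoclinism parameters), with your part~(b) simply spelling out what the paper leaves as ``follows similarly.'' The one step to phrase more carefully is the Schur step when $\mathbb{F}=\mathbb{R}$: the commutant of an irreducible real representation can be larger than $\mathbb{R}$, so rather than citing Schur's Lemma outright you should use that $S$ is self-adjoint, pick a real eigenvalue, and note its eigenspace is invariant — which is exactly how the paper closes this gap.
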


In Theorem~\ref{thm: one layer general group}, the subspace ensemble $\mathcal{W}$ does not depend on the choice of transversal $\{ t_x \}_{x\in X}$ since $W$ is held invariant by the stabilizer $H$ of $x_0$.
Indeed, if $t_x' \in G$ also satisfies $\sigma(t_x')x_0 = x$, then $t_x^{-1} t_x' \in H$ and $\rho(t_x') W = \rho(t_x) \rho(t_x^{-1} t_x') W = \rho(t_x) W$.
In the case where $G$ has an abelian subgroup $A\leq G$ that acts regularly on $X$, then selecting $A$ as a transversal shows that $\mathcal{W}$ is \textit{harmonic}, in the sense of~\cite{FIJM:23}.

Our construction of tight fusion frames in Theorem~\ref{thm: one layer general group} generalizes the well-known construction of unit-norm tight frames by irreducible representations, as explained in Theorem~10.5 of~\cite{W:18} (for instance).
An even more general version appears in Theorem~\ref{thm: multiple layers general group} below.
The precise construction in Theorem~\ref{thm: one layer general group} was suggested as a promising method for arranging subspaces by Conway, Hardin, and Sloane in~\cite{CHS:96}, but the terminology of tight fusion frames had not yet been invented, and the authors of~\cite{CHS:96} did not provide certificates for optimality of this construction.
Whereas~\cite{C:18} essentially contains part (a) of Theorem~\ref{thm: one layer general group}, it does not contain the observation in part (b) or any of its consequences.
As far as the authors know, the content of Theorem~\ref{thm: one layer general group} is entirely new in the case where $\dim W > 1$ and $G$ does not act doubly transitively on $X$.
In the sequel, we will mainly be interested in collecting EITFFs which arise from Theorem~\ref{thm: one layer general group}(b).

\begin{proof}[Proof of Theorem~\ref{thm: one layer general group}]
Let $P \leq \mathbb{F}^{d\times d}$ be the matrix of orthogonal projection onto $W$, so that for each $x \in X$ $P_x := \rho(t_x) P \rho(t_x)^*$ is the orthogonal projection onto $\rho(t_x) W$.
Then $S:= \sum_{x \in X} P_x$ is the fusion frame operator of $\mathcal{W}$.
We claim it commutes with $\rho(g)$ for every $g \in G$.
To prove this, we first demonstrate that 
\begin{equation}
\label{eq: action on subspaces}
\rho(t_{\sigma(g)x}) W = \rho(gt_x) W
\end{equation} 
for every $x \in X$.
Indeed, the identity $\sigma( g t_x) x_0= \sigma(g) x = \sigma(t_{\sigma(g) x}) x_0$
implies $t_{\sigma(g) x}^{-1} g t_x \in H$, and since $H$ holds $W$ invariant it follows that $\rho(t_{\sigma(g)x}^{-1}gt_x) W = W$, as in~\eqref{eq: action on subspaces}.
In particular, \eqref{eq: action on subspaces} implies that
\[
P_{\sigma(g) x}
= \rho(t_{\sigma(g) x}) P \rho(t_{\sigma(g) x})^*
= \rho(g t_x) P \rho(g t_x)^*
= \rho(g) P_x \rho(g)^*,
\]
so that
\[
\rho(g) S \rho(g)^* = \sum_{x\in X} \rho(g) P_x \rho(g)^* = \sum_{x\in X} P_{\sigma(g) x} = S.
\]
This proves the claim.

Now choose an eigenvalue $\lambda \in \mathbb{F}$ of $S$.
(It exists even when $\mathbb{F} = \mathbb{R}$ since $S = S^*$.)
Applying the claim, we find that the $\lambda$-eigenspace $V\leq \mathbb{F}^d$ is invariant under $\rho(g)$ for every $g \in G$: any $v \in V$ satisfies $S \rho(g) v = \rho(g) S v = \lambda \rho(g) v$.
Since $\rho$ is irreducible, it follows that $V = \mathbb{F}^d$ and $S = \lambda I$.
In other words, $\mathcal{W}$ is a tight fusion frame.
Its automorphism group contains $\sigma(G)$ since the identity $\rho(g) P_x \rho(g)^* = P_{\sigma(g) x}$ implies $\sigma(g) \in \operatorname{Aut}(\mathcal{W})$ for each $g \in G$.

Finally, to prove the ``furthermore'' part, assume $G$ acts doubly homogeneously on $X$.
Given any two unordered pairs $\{x,y\}, \{x',y'\} \subseteq X$ with $x \neq y$ and $x' \neq y$, there exists $g \in G$ with $\{ \sigma(g) x, \sigma(g) y \} = \{ x', y' \}$.
Applying~\eqref{eq: action on subspaces}, we find that 
\[
\big\{ \rho(t_{x'}) W, \rho(t_{y'}) W \big\} 
= \big\{ \rho(t_{\sigma(g) x}) W, \rho(t_{\sigma(g) y}) W \big\}
= \big\{ \rho(g) \rho(t_x) W, \rho(g) \rho(t_y) W \big\}.
\]
In particular, $\operatorname{dist}_c\big( \rho(t_{x'}) W, \rho(t_{y'}) W \big) = \operatorname{dist}_c\big( \rho(t_x) W, \rho(t_y) W \big)$.
This proves (a).
The ``if'' part of (b) follows similarly, while the ``only if'' part holds trivially.
\end{proof}

\begin{remark}
When applying Theorem~\ref{thm: one layer general group}, there could be infinitely many candidate subspaces $W$ that are invariant under $\rho(H)$, and different choices of $W$ may lead to fusion frames with very different properties.
(This occurs, for instance, in the context of \textit{Zauner's conjecture}~\cite{Z:99,RBSC:04}, where the main problem is to choose a line from a particular eigenspace so that its orbit under the Schr\"odinger representation is equiangular.
This construction may be seen as an instance of Theorem~\ref{thm: one layer general group}, where $G$ is a certain group containing the Heisenberg group as a normal subgroup of index~$3$.)
Among this possible multitude of choices for $W$, we focus in this paper on a special class of candidates identified by Creignou~\cite{C:18}.
Let $G$, $H$, and $\rho$ be as in Theorem~\ref{thm: one layer general group}.
For each irreducible representation of $H$, the corresponding isotypic component of $\rho \big|_H$ is invariant under $\rho(H)$, and so any sum of isotypic components satisfies the condition for $W$ in Theorem~\ref{thm: one layer general group}.
In particular, when $G$ is doubly transitive on $G/H$, taking $W$ to be any sum of isotypic components of $\rho \big|_H$ produces an ECTFF via Theorem~\ref{thm: one layer general group}(a).
This method creates many examples of ECTFFs~\cite{C:18}, and they have not been carefully catalogued for $\dim W >1$.
(The case $\dim W =1$ amounts to doubly transitive lines, and the interesting examples have been classified~\cite{DK:23,IM:DTII,IM:DTI}.)
As we will see, this method also produces many EITFFs via Theorem~\ref{thm: one layer general group}(b).
\end{remark}

%Subsection: Examples
\subsection{Examples}
\label{subsec: exs}
Details of the following examples can be verified with software such as GAP~\cite{gap}.
Table~\ref{tbl: sporadic params} collects EITFF parameters obtained in this subsection.
We begin with a simple example that illustrates the notation of Theorem~\ref{thm: one layer general group}.

\begin{table}
\begin{center}
\begin{tabular}{c|ccccccc}
$\mathbb{F}$ & $\mathbb{R}$ & $\mathbb{R}$ & $\mathbb{R}$ & ${\mathbb{C}}$ & ${\mathbb{C}}$ & $\mathbb{R}$ & ${\mathbb{C}}$  \\
$d$ & 2 & 5 & 8 & {10} & {10} & 11 & {16}  \\
$r$ & 1 & 2 & 3 & {3} & {4} & 3 & {5}  \\
$n$ & 3 & 5 & 6 & {15} & {10} & 11 & {12} \ \\[3 pt]
$\alpha$ & $\frac{1}{2}$ & $\frac{1}{2}$ & $\frac{1}{2}$ & $\frac{1}{2}$ & $\frac{1}{\sqrt{3}}$ & $\frac{1}{\sqrt{5}}$ & $\frac{1}{2}$ \\[3 pt]
$G$ & $S_3$ & $S_5$ & $A_6$ & $A_7$ & $\operatorname{PSL}(2,9)$ & $\operatorname{PSL}(2,11)$ & $M_{11}$
\end{tabular}
\end{center}
\smallskip
\caption{
For each column above, Theorem~\ref{thm: one layer general group} constructs an $\operatorname{EITFF}_{\mathbb{F}}(d,r,n)$ with isoclinism parameter $\alpha$.
Furthermore, the automorphism group of this EITFF is doubly transitive and contains a copy of $G$.
These examples appear in Subsection~\ref{subsec: exs}.
}
\label{tbl: sporadic params}
\end{table}

\begin{example}
Theorem~\ref{thm: one layer general group} constructs the three equiangular lines in $\mathbb{R}^2$ spanned by vectors of the \textit{Mercedes--Benz frame} as follows.
Let $G := S_3$ act on $X := [3]$, and take $x_0 := 3$ for a base point.
Its stabilizer is $H = \langle (1\, 2) \rangle$, and a transversal consists of $t_1 := (1\, 2\, 3)$, $t_2 := (1\, 3\, 2)$, and $t_3 := ()$.
For an irreducible representation, take the homomorphism $\rho \colon S_3 \to \operatorname{U}(\mathbb{R}^2)$ uniquely determined by the generator images
\[
\rho(1\, 2\, 3) = \tfrac{1}{2} \left[ \begin{array}{rr} -1 & -\sqrt{3} \\ \sqrt{3} & -1 \end{array} \right]
\quad
\text{and}
\quad
\rho(1\, 2) = \left[ \begin{array}{rr} 1 & 0 \\ 0 & -1 \end{array} \right].
\]
Then the line $W:=\operatorname{span}\{ (0,1) \} \leq \mathbb{R}^2$ is held invariant by $\rho(H)$, and its orbit $\mathcal{W} = \{ \rho(t_k) W \}_{k \in [3]}$ consists of the three lines spanned by the Mercedes--Benz frame vectors $\tfrac{1}{2} (-\sqrt{3}, -1)$, $\tfrac{1}{2} (\sqrt{3}, -1)$, and $(0,1)$.
Theorem~\ref{thm: one layer general group} implies $\mathcal{W}$ is a tight fusion frame with automorphism group $S_3$.
Furthermore, since $S_3$ is doubly homogeneous, and since any two lines are isoclinic, Theorem~\ref{thm: one layer general group}(b) implies $\mathcal{W}$ is an $\operatorname{EITFF}$ (as is well known).
\end{example}

\begin{example}
\label{ex: 5 2 5}
Next, we construct a totally symmetric $\operatorname{EITFF}_{\mathbb{R}}(5,2,5)$.
This example is the smallest instance of our main results in Section~\ref{sec: total symmetry one layer}.
Consider $G := S_5$ acting on $X := [5]$, and take $x_0 := 5$ for a base point.
Its stabilizer is $H := \langle (1\, 2),\, (2\, 3),\, (3\, 4) \rangle \cong S_4$, and a transversal $\{ t_k \}_{k\in [5]}$ is given by $t_k:=(1\, 2\, 3\, 4\, 5)^k$.
There is an irreducible representation $\rho \colon S_5 \to \operatorname{U}(\mathbb{R}^5)$ uniquely determined by the generator images
\[
\rho(1\, 2) =
\left[\begin{array}{rrrrr}
1 & 0 & 0 & 0 & 0 \\
0 & 1 & 0 & 0 & 0 \\
0 & 0 & -1 & 0 & 0 \\
0 & 0 & 0 & 1 & 0 \\
0 & 0 & 0 & 0 & -1
\end{array}\right],
\quad
\rho(2\, 3) =
\tfrac{1}{2} \left[\begin{array}{rrrrr}
2 & 0 & 0 & 0 & 0 \\
0 & -1 & \sqrt{3} & 0 & 0 \\
0 & \sqrt{3} & 1 & 0 & 0 \\
0 & 0 & 0 & -1 & \sqrt{3} \\
0 & 0 & 0 & \sqrt{3} & 1
\end{array}\right],
\]
\[
\rho(3\, 4) =
\tfrac{1}{3} \left[\begin{array}{rrrrr}
-1 & 2 \, \sqrt{2} & 0 & 0 & 0 \\
2 \, \sqrt{2} & 1 & 0 & 0 & 0 \\
0 & 0 & 3 & 0 & 0 \\
0 & 0 & 0 & 3 & 0 \\
0 & 0 & 0 & 0 & -3
\end{array}\right],
\quad
\rho(4\, 5) =
\tfrac{1}{2}
\left[ \begin{array}{rrrrr}
2 & 0 & 0 & 0 & 0 \\
0 & -1 & 0 & \sqrt{3} & 0 \\
0 & 0 & -1 & 0 & \sqrt{3} \\
0 & \sqrt{3} & 0 & 1 & 0 \\
0 & 0 & \sqrt{3} & 0 & 1
\end{array}\right].
\]
Then the image $W \leq \mathbb{R}^5$ of the isometry $\Psi:=\left[ \begin{array}{rr} e_4 & e_5 \end{array} \right] \in \mathbb{R}^{5 \times 2}$ is invariant under $\rho(H)$, where $e_k$ is the $k$th column of the identity matrix.
By Theorem~\ref{thm: one layer general group}, $\mathcal{W}=\{ \rho(t_k) W \}_{k \in [5]}$ is a tight fusion frame with automorphism group $S_5$.
An explicit fusion synthesis matrix for $\mathcal{W}$ is
\[
\left[ \begin{array}{ccccc}
\rho(t_1) \Psi & \rho(t_2) \Psi & \rho(t_3) \Psi & \rho(t_4) \Psi & \rho(t_5) \Psi
\end{array} \right]
\]
\[
=
\tfrac{1}{12}
\left[ \begin{array}{rr|rr|rr|rr|rr}
4 \, \sqrt{6} & 0 & -2 \, \sqrt{6} & -6 \, \sqrt{2} & 4 \, \sqrt{6} & 0 & 0 & 0 & 0 & 0 \\
-\sqrt{3} & 9 & -4 \, \sqrt{3} & 6 & 2 \, \sqrt{3} & 0 & -3 \, \sqrt{3} & -9 & 0 & 0 \\
-3 & -3 \, \sqrt{3} & -6 & 0 & 0 & 6 \, \sqrt{3} & -9 & 3 \, \sqrt{3} & 0 & 0 \\
-3 & -3 \, \sqrt{3} & 6 & 0 & 6 & 0 & -3 & -3 \, \sqrt{3} & 12 & 0 \\
-3 \, \sqrt{3} & 3 & 0 & -6 & 0 & -6 & -3 \, \sqrt{3} & 3 & 0 & 12
\end{array} \right].
\]
Furthermore, $W$ and $\rho(4\, 5)W$ are isoclinic since $\Psi^\top \rho(4\, 5) \Psi = \tfrac{1}{2} I_2$, and so Theorem~\ref{thm: one layer general group}(b) implies $\mathcal{W}$ is a totally symmetric $\operatorname{EITFF}(5,2,5)$.
Such an ensemble was previously constructed by Et-Taoui using a different technique~\cite{ET:06}.
In Section~\ref{sec: total symmetry one layer}, we extend the present construction to an infinite family of totally symmetric EITFFs.
See Example~\ref{ex: 5 5 2 redux}
for more detail.
\end{example}

\begin{example}
We construct an $\operatorname{EITFF}_{\mathbb{R}}(8,3,6)$ from the action of $G:=A_6$ on $X:=[6]$.
Fix a base point $x_0 \in [6]$, let $H \leq G$ be its stabilizer, and choose a transversal $\{t_k\}_{k\in [6]}$.
Next, let $\rho \colon A_6 \to \operatorname{U}(\mathbb{R}^8)$ be either of the two irreducible representations with degree~8.
(Both can be taken to be real, and either one works.)
The restriction of $\rho$ to $H$ decomposes as a direct sum of two irreducible subrepresentations, one of degree~3 and another of degree~5.
Let $W \leq \mathbb{R}^8$ be the isotypic component for the 3-dimensional constituent. 
By Theorem~\ref{thm: one layer general group}, its orbit $\{ \rho(t_k)W \}_{k \in [6]}$ is a tight fusion frame whose automorphism group contains $A_6$.
This turns out to be an $\operatorname{EITFF}_{\mathbb{R}}(8,3,6)$.
EITFFs with such parameters previously appeared in~\cite{ETR:09,IKM:21}.
We include a fusion synthesis matrix with the arXiv version of this paper.
\end{example}

\begin{example}
The group $\operatorname{PSL}(2,9)$ acts doubly transitively on $10$~points~\cite{DM:96}.
Here we construct an $\operatorname{EITFF}_{\mathbb{C}}(10,4,10)$ whose automorphism group contains an isomorphic copy of this action.
Consider $G:=\operatorname{SL}(2,9) \allowbreak = \langle g_1,g_2,g_3 \rangle$, where
\[
g_1:= \left[ \begin{array}{rr} a & 0 \\ 0 & a^{-1} \end{array} \right],
\quad
g_2:=\left[ \begin{array}{rr} 1 & 1 \\ 0 & 1 \end{array} \right],
\quad
g_3:=\left[ \begin{array}{rr} 0 & -1 \\ 1 & 0 \end{array} \right],
\]
and where $a$ is a generator of the multiplicative group of the finite field $\mathbb{F}_9$.
Then $G$ acts doubly transitively on the set $X$ of one-dimensional lines through the origin of $\mathbb{F}_9^2$, and the stabilizer of the line spanned by 
$[\ 1\ 0\ ]^\top$
is the upper-triangular subgroup $H := \langle g_1,g_2 \rangle$.
This action of $G$ is not faithful, and the corresponding image in $S_{X}$ is isomorphic to $\operatorname{PSL}(2,9)$.

For a unitary representation of $G$, first let $\xi \colon G \to \mathbb{T} = \operatorname{U}(\mathbb{C})$ be either one of the two degree-1 representations of $H$ given by  $\big(\xi(g_1),\xi(g_2)\big) = \big(e^{\frac{2\pi i}{8}},1\big)$ or $\big(\xi(g_1),\xi(g_2)\big) = \big(e^{\frac{7\cdot 2\pi i}{8}},1\big)$, and then let
$\rho\colon G \to \operatorname{U}(\mathbb{C}^{10})$ be the unitary representation of $G$ \textit{induced} from $\xi$; see~\cite{CSST:18}.
It turns out that $\rho$ is irreducible, and its restriction to $H$ decomposes as a sum of pairwise inequivalent irreducible constituents with degrees $1,1,4,4$.
Let $W \leq \mathbb{C}^{10}$ be either of the $4$-dimensional isotypic components.
By Theorem~\ref{thm: one layer general group}, the orbit $\{ \rho(g) W \}_{gH \in G/H}$ of $W$ is a tight fusion frame whose automorphism group contains $\operatorname{PSL}(2,9) \leq S_{X}$.
This fusion frame turns out to be an $\operatorname{EITFF}_{\mathbb{C}}(10,4,10)$.
As far as the authors know, these parameters are new.
We include a fusion synthesis matrix as an ancillary file with the arXiv version of this paper.
\end{example}

\begin{example}
There is a sporadic doubly transitive action of $\operatorname{PSL}(2,11)$ on $11$ points; see~\cite{DM:96}.
Here we construct an $\operatorname{EITFF}_{\mathbb{R}}(11,3,11)$ from this action.
Consider $G := \operatorname{SL}(2,11) = \langle g_1,g_2,g_3 \rangle$, where
\[
g_1:= \left[ \begin{array}{rr} 6 & 10 \\ 2 & 9 \end{array} \right],
\qquad
g_2:= \left[ \begin{array}{rr} 4 & 4 \\ 4 & 7 \end{array} \right],
\qquad
g_3:= \left[ \begin{array}{rr} 0 & 1 \\ 10 & 0 \end{array} \right].
\]
Then (one can verify) $G$ has a 2-transitive action on $X:=[11]$, where a base point $x_0:=11$ is stabilized by $H := \langle g_1,g_2 \rangle$, and where a transversal $\{ t_k \}_{k \in [11]}$ is given by $t_k:= \left[ \begin{array}{rr} 9 & 4 \\ 6 & 4 \end{array} \right]^k$.
This is not a faithful action of $G$, and the corresponding image in $S_{11}$ is isomorphic to $\operatorname{PSL}(2,11)$.
Let $\rho \colon G \to \operatorname{U}(\mathbb{R}^{11})$ be an irreducible representation of degree~11.
(There is only one up to equivalence, and it can be taken to be real.)
Then the restriction of $\rho$ to $H$ decomposes as a direct sum of three inequivalent irreducible subspaces with dimensions $3,3,7$.
Let $W \leq \mathbb{R}^{11}$ be either one of the two 3-dimensional isotypic components.
Then its orbit $\mathcal{W}:=\{ \rho(t_k) W \}_{k \in [11]}$ is a tight fusion frame whose automorphism group contains a copy of $\operatorname{PSL}(2,11)$, by Theorem~\ref{thm: one layer general group}.
This fusion frame turns out to be an $\operatorname{EITFF}_{\mathbb{R}}(11,3,11)$.
Furthermore, we constructed $\mathcal{W}$ as an orbit of the regular abelian subgroup $\langle t_1 \rangle \cong \mathbb{Z}_{11}$, and so it is a \emph{harmonic} $\operatorname{EITFF}_{\mathbb{R}}(11,3,11)$ in the sense of~\cite{FIJM:23}.
Such an EITFF appeared in Theorem~4.3 of~\cite{FIJM:23}.
\end{example}

\begin{example}
Next, we construct an $\operatorname{EITFF}_{\mathbb{C}}(16,5,12)$ whose automorphism group is triply transitive and contains a copy of the Mathieu group $M_{11}$.
Let $G = M_{11}$, and let $H \leq G$ be a subgroup of index $[G : H] = 12$.
(There is only one conjugacy class of such subgroups.)
Take $\rho \colon G \to \operatorname{U}(\mathbb{C}^{16})$ to be either one of the two (complex conjugate) irreducible representations of $G$ having degree~16.
Then the restriction of $\rho$ to $H$ decomposes as a direct sum of two irreducible subspaces with dimensions $5$ and $11$.
If $W \leq \mathbb{C}^{16}$ is the 5-dimensional isotypic component, then its orbit $\{ \rho(g) W \}_{gH \in G/H}$ is a tight fusion frame whose automorphism group contains a copy of $M_{11}$, by Theorem~\ref{thm: one layer general group}.
This fusion frame turns out to be an $\operatorname{EITFF}_{\mathbb{C}}(16,5,12)$.
As far as the authors know, these parameters are new.
We include a fusion synthesis matrix as an ancillary file with the arXiv version of this paper.
\end{example}

\begin{example}
Finally, we construct an $\operatorname{EITFF}_{\mathbb{C}}(10,3,15)$ whose automorphism group is doubly transitive and contains a copy of $A_7$.
Let $G = A_7$, and let $H \leq G$ be a subgroup of index $[G : H] = 15$.
(There are two conjugacy classes, and either one works.)
Take $\rho\colon G \to \operatorname{U}(\mathbb{C}^{10})$ to be either one of the two (complex conjugate) irreducible representations of $G$ having degree~10.
Then the restriction of $\rho$ to $H$ decomposes as a direct sum of two irreducible subspaces with dimensions $3$ and $7$.
If $W \leq \mathbb{C}^{10}$ is the 3-dimensional isotypic component, then its orbit $\{ \rho(g) W \}_{gH \in G/H}$ is a tight fusion frame whose automorphism group contains an action of $A_7$ on 15~points, by Theorem~\ref{thm: one layer general group}.
This fusion frame turns out to be an $\operatorname{EITFF}_{\mathbb{C}}(10,3,15)$.
As far as the authors know, these parameters are also new.
We include a fusion synthesis matrix as an ancillary file with the arXiv version of this paper.
\end{example}

%Subsection: Multiple layers
\subsection{Multiple layers}

The remainder of the paper is devoted to generalizing the totally symmetric $\operatorname{EITFF}_{\mathbb{R}}(5,2,5)$ from Example~\ref{ex: 5 2 5}.
We will utilize the construction of Theorem~\ref{thm: one layer general group}, as well as the following generalization, which produces tight fusion frames from multiplicity-free representations.

\begin{theorem}
\label{thm: multiple layers general group}
Let $G$ be a finite group acting transitively on a set $X$ via $\sigma \colon G \to S_X$.
Fix a base point $x_0 \in X$ with stabilizer $H \leq G$, and select a transversal $\{ t_x \}_{x\in X}$ of elements in $G$ such that $\sigma(t_x) x_0 = x$ for each $x \in X$.
Next, choose pairwise inequivalent, irreducible representations $\rho_i \colon G \to \operatorname{U}(\mathbb{F}^{d_i})$, $i \in [\ell]$, over a common base field $\mathbb{F} \in \{ \mathbb{R}, \mathbb{C} \}$.
Assume their restrictions to $H$ have a common constituent $\pi \colon H \to \operatorname{U}(\mathbb{F}^r)$ (not necessarily irreducible), and select isometries $\Psi_i \in \mathbb{F}^{d_i \times r}$ with the property that $\rho_i(h) \Psi_i = \Psi_i \pi(h)$ for every $i\in [\ell]$ and $h \in H$.
Finally, denote $d:=\sum_{i\in [\ell]} d_i$ and $\rho := \bigoplus_{i\in [\ell]} \rho_i \colon G \to \operatorname{U}(\mathbb{F}^d)$.
Then the vertical concatenation 
\[ \left[ \begin{array}{cccc} \sqrt{d_1/d} \Psi_1 ; & \sqrt{d_2/d} \Psi_2 ; & \cdots \ \ ; & \sqrt{d_\ell/d} \Psi_\ell \end{array} \right] \in \mathbb{F}^{d \times r} \]
is an isometry onto a subspace $W \leq \mathbb{F}^d$, and its orbit $\mathcal{W}:= \{ \rho(t_x) W \}_{x\in X}$ is a tight fusion frame whose automorphism group contains $\sigma(G) \leq S_X$.

Furthermore, if $G$ acts doubly homogeneously on $X$ then the following hold:
\begin{itemize}
\item[(a)]
$\mathcal{W}$ is an ECTFF, and
\item[(b)]
when $|X| \geq 2$, $\mathcal{W}$ is an EITFF if and only if there exists $g \in G \setminus H$ for which $W$ and $\rho(g)W$ are isoclinic.
\end{itemize}
\end{theorem}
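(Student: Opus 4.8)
The plan is to follow the skeleton of the proof of Theorem~\ref{thm: one layer general group}, replacing its single irreducibility step with a block analysis that exploits the multiplicity-free structure of $\rho$. Write $n := |X|$, and let $\iota_i \colon \mathbb{F}^{d_i} \to \mathbb{F}^d$ denote the canonical inclusion onto the $i$-th summand, so that the concatenated matrix is $\Phi_0 := \sum_{i\in[\ell]}\sqrt{d_i/d}\,\iota_i\Psi_i$. First I would record two preliminary facts. Since each $\Psi_i$ is an isometry,
\[
\Phi_0^*\Phi_0 = \sum_{i\in[\ell]}\tfrac{d_i}{d}\,\Psi_i^*\Psi_i = \tfrac1d\Big(\sum_{i\in[\ell]}d_i\Big)I_r = I_r,
\]
so $\Phi_0$ is an isometry onto $W := \operatorname{im}\Phi_0$. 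Moreover, as $\rho(h)=\bigoplus_i\rho_i(h)$ and $\rho_i(h)\Psi_i=\Psi_i\pi(h)$ for every $h\in H$, we get $\rho(h)\Phi_0=\Phi_0\pi(h)$; since $\pi(h)$ is unitary, this shows $W$ is held invariant by $\rho(H)$ --- exactly the hypothesis that drives the orbit argument.

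With $W$ invariant under $\rho(H)$, I would transcribe the first half of the proof of Theorem~\ref{thm: one layer general group} verbatim. Setting $P:=\Phi_0\Phi_0^*$, $P_x:=\rho(t_x)P\rho(t_x)^*$, and $S:=\sum_{x\in X}P_x$, the identity~\eqref{eq: action on subspaces} holds for the same reason (its proof used only invariance of $W$ under $H$), and therefore $\rho(g)S\rho(g)^*=S$ for every $g\in G$; that is, the self-adjoint operator $S$ intertwines $\rho$ with itself.

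The crux is to upgrade this commutation to tightness without the benefit of an irreducible ambient representation. Because $\rho_1,\dots,\rho_\ell$ are pairwise inequivalent, the version of Schur's Lemma recorded in the preliminaries forces every off-diagonal block $\iota_i^* S\iota_j$ (with $i\neq j$) to vanish, so $S=\bigoplus_{i\in[\ell]}S_i$ where $S_i:=\iota_i^* S\iota_i$ is self-adjoint and commutes with $\rho_i(g)$ for all $g\in G$. For each fixed $i$, the eigenspace argument from Theorem~\ref{thm: one layer general group} applies unchanged --- a (real) eigenvalue of $S_i$ has a $\rho_i$-invariant eigenspace, which must be all of $\mathbb{F}^{d_i}$ by irreducibility --- giving $S_i=\lambda_i I_{d_i}$ for some $\lambda_i\in\mathbb{F}$. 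It remains to show the $\lambda_i$ all coincide, and this is exactly where the weights $\sqrt{d_i/d}$ earn their keep. Reading off the $(i,i)$-block of $P$ gives $\iota_i^* P\iota_i=\tfrac{d_i}{d}\Psi_i\Psi_i^*$, whence $S_i=\tfrac{d_i}{d}\sum_{x\in X}\rho_i(t_x)\Psi_i\Psi_i^*\rho_i(t_x)^*$. Taking a trace and using $\operatorname{tr}(\Psi_i\Psi_i^*)=r$ yields $\lambda_i d_i=\operatorname{tr}S_i=\tfrac{d_i}{d}\,nr$, so $\lambda_i=nr/d$ independent of $i$. Hence $S=\tfrac{nr}{d}I_d$ and $\mathcal{W}$ is a tight fusion frame; the containment $\sigma(G)\leq\operatorname{Aut}(\mathcal{W})$ follows as before from $\rho(g)P_x\rho(g)^*=P_{\sigma(g)x}$.

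Finally, I expect conclusions (a) and (b) to transfer with no new work, since the corresponding argument in Theorem~\ref{thm: one layer general group} deduced equality of distances (and isoclinism) across all pairs purely from~\eqref{eq: action on subspaces} and double homogeneity, using nothing about the ambient representation. For the ``only if'' direction of (b) I would note that $t_{x_0}\in H$ forces $\rho(t_{x_0})W=W$ while $t_x\notin H$ for $x\neq x_0$, so any isoclinic pair in the EITFF furnishes the required $g\in G\setminus H$; the ``if'' direction propagates a single isoclinic pair to all pairs via double homogeneity, exactly as in the single-layer proof. The one genuinely new obstacle is the block/trace bookkeeping of the third paragraph --- verifying that the weighting $\sqrt{d_i/d}$ forces every block eigenvalue to equal $nr/d$ --- while everything else is a faithful adaptation of Theorem~\ref{thm: one layer general group}.
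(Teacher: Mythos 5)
Your proposal is correct and follows essentially the same route as the paper's proof: block-diagonalize the fusion frame operator via Schur's Lemma for the pairwise inequivalent $\rho_i$, reduce each diagonal block to a scalar by irreducibility, and equalize the scalars with the trace computation that the weights $\sqrt{d_i/d}$ are designed for. The only (harmless) organizational difference is that you first establish $\rho(H)$-invariance of $W$ and the global commutation $\rho(g)S\rho(g)^*=S$ before splitting into blocks, whereas the paper proves the blockwise intertwining identity~\eqref{eq: intertwine} directly from $\rho_k(h)\Psi_k=\Psi_k\pi(h)$; both are valid and the remaining steps, including the ``furthermore'' part, match the paper's argument.
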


As with Theorem~\ref{thm: one layer general group}, the subspace ensemble $\mathcal{W}$ in Theorem~\ref{thm: multiple layers general group} does not depend on the choice of transversal $\{t_x\}_{x\in X}$.
We refer to $\ell$ as the number of \textbf{layers} in Theorem~\ref{thm: multiple layers general group}, since the resulting fusion synthesis matrix decomposes as an $[\ell] \times X$ array of blocks, where the $(i,x)$ block is $\sqrt{d_i/d} \rho_i(t_x) \Psi_i \in \mathbb{F}^{d_i \times r}$.
This decomposition appears prominently in the following proof.

\begin{proof}[Proof of Theorem~\ref{thm: multiple layers general group}]
It is easy to verify that the matrix in the theorem statement is an isometry.
Let $W$ be its image, and let $P \in \mathbb{F}^{d\times d}$ be the matrix of orthogonal projection onto $W$, so that for each $x \in X$ the orthogonal projection onto $\rho(t_x) W$ is given by $P_x := \rho(t_x) P \rho(t_x)^*$.
We may view each $P_x$ as an $\ell \times \ell$ array of blocks, where the $(i,j)$ block is 
\[ (P_x)_{ij} = \frac{\sqrt{d_i d_j}}{d} \rho_i(t_x) \Psi_i \Psi_j^* \rho_j(t_x)^* \in \mathbb{F}^{d_i \times d_j}. \]
Then the fusion frame operator $S = \sum_{x\in X} P_x$ has a corresponding block structure, where
\[
S_{ij} = \frac{\sqrt{d_i d_j}}{d} \sum_{x\in X} \rho_i(t_x) \Psi_i \Psi_j^* \rho_j(t_x)^* \in \mathbb{F}^{d_i \times d_j}.
\]
We claim that $\rho_i(g) S_{ij} = S_{ij} \rho_j(g)$ for every $i,j \in [\ell]$ and $g \in G$.
To see this, choose any $g \in G$ and $x \in X$.
As in the proof of Theorem~\ref{thm: one layer general group}, it holds that $h:= t_{\sigma(g) x}^{-1} g t_x \in H$, and so 
\[
\rho_k(g) \rho_k( t_x) \Psi_k = \rho_k( t_{\sigma(g) x}) \rho_k( t_{\sigma(g) x}^{-1} g t_x) \Psi_k = \rho_k( t_{\sigma(g) x} ) \Psi_k \pi(h)
\]
for every $k \in [\ell]$.
Given any $i,j \in [\ell]$, we deduce the crucial identity
\begin{equation}
\label{eq: intertwine}
\rho_i(g) \rho_i( t_x) \Psi_i \Psi_j^* \rho_j(t_x)^* \rho_j(g)^*
= \rho_i( t_{\sigma(g) x} ) \Psi_i \Psi_j^* \rho_j( t_{\sigma(g) x} )^*.
\end{equation}
Summing over $x \in X$ and then re-indexing, we quickly deduce that $\rho_i(g) S_{ij} \rho_j(g)^* = S_{ij}$, as claimed.

For $i \neq j$, the irreducible representations $\rho_i$ and $\rho_j$ are inequivalent, and our claim together with Schur's Lemma implies $S_{ij} = 0$.
Similarly, for $i = j$, we have $S_{ii}^* = S_{ii}$, and it follows as in the proof of Theorem~\ref{thm: one layer general group} that $S_{ii} = c_i I_{d_i}$ for some constant $c_i$.
The constant $c_i$ does not depend on $i$, as seen by taking a trace:
\[
c_i d_i = \operatorname{tr}(c_i I_{d_i})
= \operatorname{tr}(S_{ii})
= \frac{d_i}{d} \sum_{x\in X} \operatorname{tr}\Bigl[ \rho_i(t_x) \Psi_i^* \Psi_i \rho_i(t_x)^* \Bigr]
= \frac{d_i}{d} \sum_{x\in X} \operatorname{tr}( I_r )
= \frac{d_i}{d} r |X|.
\]
Hence, $S = \frac{r |X|}{d} I_d$, and $\mathcal{W}$ is a tight fusion frame.
To see that its automorphism group contains $\sigma(G)$, we may interpret~\eqref{eq: intertwine} to say $\rho_i(g) (P_x)_{ij} \rho_j(g)^* = (P_{\sigma(g) x})_{ij}$, so that $\rho(g) P_x \rho(g)^* = P_{\sigma(g) x}$ for every $g\in G$ and $x \in X$.

Finally, the proof of the ``furthermore'' part is similar to that in Theorem~\ref{thm: one layer general group}.
\end{proof}

\section{Review of representation theory of $S_n$}
\label{sec: Sn review}

In order to produce more EITFFs with the techniques from Section~\ref{sec: tight fusion frames from symmetry}, we now recall some representation theory of the symmetric group $S_n$~\cite{CSST:10}.
Some of the notation that follows is nonstandard, but it will prove helpful in the sequel.

As detailed below, each irreducible representation of $S_n$ is associated with a unique \textbf{partition} $\lambda \vdash n$, that is, a nonincreasing sequence of positive integers that sum to~$n$.
We use superscripts to indicate repeated entries in this sequence:
\[
\lambda = (\underbrace{l_1,\dotsc,l_1}_{a_1 \text{ copies }},\cdots,\underbrace{l_m,\dotsc,l_m}_{a_m \text{ copies }}) =: (l_1^{a_1},\dotsc,l_m^{a_m}).
\]
Here, if $l_1 > \dotsb > l_m$, then we say $\lambda$ has $m$ \textbf{distinct parts}.

A \textbf{Young diagram} depicts a partition $\lambda = (\lambda_1,\dotsc,\lambda_h)$ as an array of $n$ boxes arranged on a grid, where the $i$th row from the top contains $\lambda_i$ boxes.
For example, Figure~\ref{fig: young ex}(a) shows the Young diagram with shape $(4,2^2) \vdash 8$.
Boxes of a Young diagram are indexed just like matrix entries: rows from top to bottom and columns from left to right.
For $c \in \mathbb{Z}$, the $c$th \textbf{superdiagonal} of a Young diagram consists of all boxes in position $(i,j)$ with $j-i=c$.
Thus, the main diagonal is the $0$th superdiagonal, with positive superdiagonals above it and negative ones below it.
In Figure~\ref{fig: young ex}(a), the upper blue box is on the 3rd superdiagonal, and the lower one is on the $-1$st.  
The \textbf{axial distance} from the $(i,j)$ box to the $(k,l)$ box is defined as the difference of their superdiagonals: 
\[
D\Bigl( (i,j),\, (k,l) \Bigl) := (j-i) - (l-k).
\]
This is the total signed distance of any path from $(i,j)$ to $(k,l)$, where paths that go down or to the left count as having positive length, and those that go up or to the right count negative.
In Figure~\ref{fig: young ex}(a), the axial distance from the lower blue square to the upper one is $-4$.

\begin{figure}
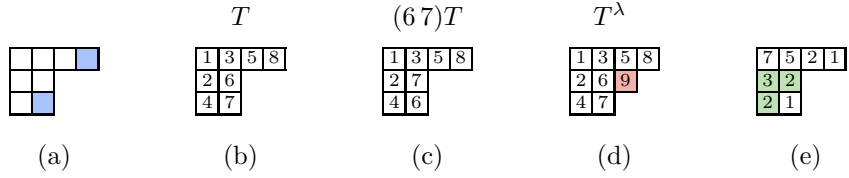

\begin{center}
\ytableausetup{smalltableaux}
\begin{tabular}{ccccc}
\centering
& $T$ & $(6\, 7) T$ & $T^{\lambda}$ \vspace{5 pt} \\
\hspace{10 pt}
\begin{ytableau}
~ & ~ & ~ & *(ltblue) ~  \\
~ & ~ \\
~ & *(ltblue) ~
\end{ytableau}
\hspace{10 pt}
&\hspace{10 pt}
%T
\begin{ytableau}
1 & 3 & 5 & 8  \\
2 & 6 \\
4 & 7
\end{ytableau}
\hspace{10 pt}
&
\hspace{10 pt}
%(6 7) T
\begin{ytableau}
1 & 3 & 5 & 8  \\
2 & 7 \\
4 & 6
\end{ytableau}
\hspace{10 pt}
&
\hspace{10 pt}
%e(T)
\begin{ytableau}
1 & 3 & 5 & 8  \\
2 & 6 & *(ltpink) 9 \\
4 & 7
\end{ytableau}
\hspace{10 pt}
&
\hspace{10 pt}
\begin{ytableau}
7 & 5 & 2 & 1  \\
*(ltgreen) 3 & *(ltgreen) 2 \\
*(ltgreen) 2 & 1
\end{ytableau}
\hspace{10 pt}
\vspace{10 pt} \\
(a) & (b) & (c) & (d) & (e)
\end{tabular}
\end{center}
\caption{
\footnotesize{
(a) The Young diagram with shape $\mu := (4,2^2)$, where removable boxes are colored blue.
(b) A standard tableau $T \in \operatorname{Tab}(\mu)$.
(c) The action of a transposition on $T$.
(d) The embedding of $T$ into $\operatorname{Tab}(\lambda)$ for $\lambda := (4,3,2) \in \mu^\uparrow$, where the $\lambda-\mu$ box is colored red.
(e) Hook lengths for $\mu$, where one hook is colored green.
}
}
\label{fig: young ex}
\end{figure}

A \textbf{Young tableau} $T$ with shape $\lambda$ assigns the numbers $1,\dotsc,n$ bijectively to the boxes in the Young diagram of $\lambda$.
We write $T_{ij}$ for the number in the $(i,j)$ box of $T$.
Then $S_n$ acts on the set of all tableaux with shape $\lambda$ by the formula $(gT)_{ij} = g(T_{ij})$ for $g \in S_n$.
Figure~\ref{fig: young ex}(b) shows an example of a tableau $T$ with shape $(4,2,2)$, and then Figure~\ref{fig: young ex}(c) shows $(6\, 7) T$.
A tableau is called \textbf{standard} if its entries increase moving down and to the right: if $i < i'$ and $j < j'$ then $T_{ij} < T_{i'j}$ and $T_{ij} < T_{ij'}$.
In Figure~\ref{fig: young ex}, $T$ is standard and $(6\, 7)T$ is not.
The set of all standard tableaux with shape $\lambda \vdash n$ is denoted $\operatorname{Tab}(\lambda)$.
The \textbf{content} of a (possibly nonstandard) tableau $T$ with shape $\lambda$ is the sequence $C(T) = (a_1,\dotsc,a_n)$ of superdiagonal positions of $1,\dotsc,n$; that is, $a_k = j - i$ when $k = T_{ij}$.
The \textbf{axial distance} from $i\in [n]$ to $j \in [n]$ in $T$ is the same as for their box positions, $D_T(i,j):=a_i - a_j$.
For example, the tableau $T$ in Figure~\ref{fig: young ex}(b) has content 
$C(T) = (0,-1,1,-2,2,0,-1,3)$, and some of its axial distances are $D_T(5,4) = 4$, $D_T(1,6) = 0$, and $D_T(7,6) = -1$.

As we now explain, each partition $\lambda \vdash n$ determines a representation $\pi_\lambda$ of $S_n$ on the real Hilbert space $V_\lambda$ with formal orthonormal basis $\{ v_T \}_{T \in \operatorname{Tab}(\lambda)}$.
It suffices to describe $\pi_\lambda$ on the generating set of adjacent transpositions $s_k := (k\, k+1)$, $k \in [n-1]$, and it suffices to give the action of $\pi_\lambda(s_k)$ on the basis vectors $v_T$, $T \in \operatorname{Tab}(\lambda)$.
In this paper, we take $\pi_\lambda$ in \textbf{Young's orthogonal form}, where
\begin{equation}
\label{eq:Young's orthogonal form}
\pi_\lambda(s_k) v_T =
\frac{1}{D_T(k+1,k)} v_T + \sqrt{ 1 - \frac{1}{D_T(k+1,k)^2} } v_{s_k T}.
\end{equation}
(Above, the coefficient $\sqrt{ 1 - \frac{1}{D_T(k+1,k)^2} }$ is zero precisely when $s_k T \notin \operatorname{Tab}(\lambda)$, and in this case it does not matter that $v_{s_k T}$ is undefined.)
Expressed as a matrix with entries indexed by $\operatorname{Tab}(\lambda) \times \operatorname{Tab}(\lambda)$,
\[
[ \pi_\lambda(s_k) ]_{S,T} = 
\begin{cases}
\frac{1}{D_T(k+1,k)} & \text{if } S = T, \\[5 pt]
\sqrt{ 1 - \frac{1}{D_T(k+1,k)^2} } & \text{if } S = s_k T, \\[5 pt]
0 & \text{otherwise}.
\end{cases}
\]
Then $\pi_\lambda(s_k)$ is a unitary operator on $V_\lambda$, and the images $\{ \pi_\lambda(s_k) : k \in {[n-1]}\}$ determine an absolutely irreducible unitary representation $\pi_\lambda \colon S_n \to \operatorname{U}(V_\lambda)$.
Furthermore, this accounts for all irreducible representations of $S_n$, and when $\lambda_1 \neq \lambda_2$ the representations $\pi_{\lambda_1}$ and $\pi_{\lambda_2}$ are inequivalent.

The degree of $\pi_\lambda$ can be computed with the aid of the Young diagram for $\lambda$ as follows.
In a Young diagram, the \textbf{hook} with corner at position $(k,l)$ consists of all boxes in position $(i,j)$ where either $i = k$ and $j \geq l$, or $i \geq k$ and $j = l$.
In other words, a hook consists of a box, all boxes underneath it, and all boxes to its right.
The Young diagram in Figure~\ref{fig: young ex}(e) shows a hook highlighted in green.
The number of boxes in a hook is its \textbf{length}, denoted $h_\lambda(k,l)$ for the hook with corner at position $(k,l)$.
In Figure~\ref{fig: young ex}(e), the length of each hook in the Young diagram is listed at its corner.
The \textbf{hook length formula} states that
\[
d_\lambda := \dim(V_\lambda) = | \operatorname{Tab}(\lambda) | = \frac{n!}{\prod_{(k,l)} h_\lambda(k,l) },
\]
where the product is over all box positions in the Young diagram for $\lambda$.
For example, one can compute $d_{(4,2,2)}$ with the aid of Figure~\ref{fig: young ex}(e):
\[
d_{(4,2,2)} = \frac{8!}{7 \cdot 5 \cdot 2 \cdot 1 \cdot 3 \cdot 2 \cdot 2 \cdot 1} = 48.
\]

The \textbf{transpose} of a partition $\lambda = (\lambda_1,\dotsc,\lambda_h) \vdash n$ is defined as the partition $\lambda' = (\lambda_1',\dotsc,\lambda'_{h'}) \vdash n$ given by $\lambda_i' := \max\{ j : \lambda_j \geq i\}$, where $h' := \lambda_1$.
This terminology is appropriate since the Young diagram of $\lambda'$ is obtained by reflecting the Young diagram of $\lambda$ across its main diagonal.
Then $d_{\lambda'} = d_\lambda$ (as shown for instance by the Hook length formula).
If $\lambda = \lambda'$ then $\lambda$ is called \textbf{symmetric}.
Similarly, the \textbf{transpose} of a tableau $T$ with shape $\lambda$ is defined to be the tableau $T'$ with shape $\lambda'$ and entries $T'_{ij} = T_{ji}$.

The irreducible constituents of the restriction $\pi_\lambda \big|_{S_{n-1}}$ are necessarily indexed by certain partitions of $n-1$, and they can be identified with the aid of a Young diagram as follows.
In the Young diagram for $\lambda = (\lambda_1,\dotsc,\lambda_h)$, the $(k,l)$ box is called \textbf{removable} if $l = \lambda_k$ and either $k = h$ or $\lambda_k > \lambda_{k+1}$.
In Figure~\ref{fig: young ex}(a), the removable boxes are colored blue.
Equivalently, the $(k,l)$ box is removable if its deletion produces a valid Young diagram for another partition $\mu \vdash (n-1)$.
We express the relationship between such $\lambda$ and $\mu$ by writing
$\mu \in \lambda^\downarrow$
and 
$\lambda \in \mu^\uparrow$,
and we write $\lambda - \mu := (k,l)$ for the index of the removed box.
We use 
$\lfloor \lambda \rfloor := \{ \lambda - \mu : \mu \in \lambda^\downarrow\}$
for the set of box indices that can be removed from $\lambda$.
Notice that the number $| \lfloor \lambda \rfloor |$ of removable boxes is precisely the number of distinct entries in the sequence $\lambda$, i.e., the number of distinct parts.
An inductive argument shows that a box is removable if and only if it contains $n$ in some standard tableau:
\begin{equation}
\label{eq: removable boxes get n}
\lfloor \lambda \rfloor
= \big\{ (i,j): T_{ij} = n \text{ for some }T \in \operatorname{Tab}(\lambda) \big\}.
\end{equation}
The \textbf{branching rule} says that each constituent of $\pi_\lambda \big|_{S_{n-1}}$ occurs with multiplicity one, and it is obtained by removing a box:
\[ 
\pi_\lambda \big|_{S_{n-1}} \cong \bigoplus_{\mu \in \lambda^\downarrow} \pi_\mu.
\]

For an explicit embedding, choose $\mu \in \lambda^\downarrow$, and let $(k,l) = \lambda - \mu$ index the removed box.
Given a tableau $R$ with shape $\mu$, we define $R^\lambda$ to be the tableau that
restores the $(k,l)$ box to $R$ and fills it with the number $n$.
Figure~\ref{fig: young ex}(d) shows an example.
Explicitly, $R^\lambda_{ij} = R_{ij}$ for $(i,j) \neq (k,l)$, and $R^\lambda_{kl} = n$.
Then 
$D_R(i,j) = D_{R^\lambda}(i,j)$ for every $i,j \in [n-1]$.
Furthermore, 
$(gR)^\lambda = g (R^\lambda)$ for every $g \in S_{n-1}$, and
$R^\lambda \in \operatorname{Tab}(\lambda)$ if and only if $R \in \operatorname{Tab}(\mu)$.
Now consider the linear isometry $\Psi_{\lambda,\mu} \colon V_\mu \to V_\lambda$ given by $\Psi_{\lambda,\mu} v_R = v_{R^\lambda}$ for $R \in \operatorname{Tab}(\mu)$.
As a matrix, the entries of $\Psi_{\lambda,\mu}$ are indexed by $\operatorname{Tab}(\lambda) \times \operatorname{Tab}(\mu)$ and can be expressed in terms of the Kronecker delta:
\[
[\Psi_{\lambda,\mu}]_{T,R} = \delta_{T, R^\lambda}.
\]
Then it is easy to check that $\Psi_{\lambda,\mu} \pi_\mu(s_k) v_R = \pi_\lambda(s_k) \Psi_{\lambda,\mu} v_R$ for every $k \in [n-2]$ and $R \in \operatorname{Tab}(\mu)$, so that 
\[
\Psi_{\lambda,\mu} \pi_\mu(g) = \pi_\lambda(g) \Psi_{\lambda,\mu},
\qquad
g \in S_{n-1}.
\]
In particular, the $\pi_\mu$-isotypic component of $\pi_\lambda \big|_{S_{n-1}}$ is
\[
\operatorname{im} \Psi_{\lambda,\mu} 
= \operatorname{span}\{ v_{R^\lambda} : R \in \operatorname{Tab}(\mu) \}
= \operatorname{span}\{ v_T : T \in \operatorname{Tab}(\lambda),\, T_{kl} = n \}.
\]

%SECTION: TOTALY SYMMETRY - ONE LAYER
\section{Total symmetry: One layer}
\label{sec: total symmetry one layer}

Having reviewed the necessary background, we now apply the single-layer construction of Theorem~\ref{thm: one layer general group} to the representations from Section~\ref{sec: Sn review}, obtaining new and totally symmetric EITFFs in Theorem~\ref{thm: single layer}.
Specifically, select a partition $\lambda \vdash n$ and an isotypic component $W \leq V_\lambda$ of $\pi_\lambda \big|_{S_{n-1}}$.
Then the action of $S_{n-1}$ on $W$ is equivalent to $\pi_\mu$ for some $\mu \in \lambda^\downarrow$.
Choose a transversal $\{t_k \}_{k\in [n]}$ in $S_n$ with $t_k n = k$ for each $k$, and consider the orbit $\mathcal{W} := \{ \pi_\lambda(t_k) W \}_{k \in [n]}$.
Then Theorem~\ref{thm: one layer general group}(a) asserts that $\mathcal{W}$ is a totally symmetric ECTFF.
As detailed below, there are many choices of $\lambda$ and $\mu$ for which $\mathcal{W}$ is an EITFF.
We begin with some illustrative examples, which we encourage the reader to study carefully.

\begin{example}
\label{ex: 5 5 2 redux}
In the context described above, take $\lambda = (3,2)$ and $\mu = (2,2)$.
Their Young diagrams are shown below, where the red box belongs to $\lambda$ but not $\mu$, and where the only removable box of $\mu$ is colored blue.
\begin{center}
\begin{ytableau}
~ & ~ & *(ltpink) ~  \\
~ &  *(ltblue) ~ \\
\end{ytableau}
\end{center}
We will show that the $\pi_\mu$-isotypic component $W = \operatorname{im} \Psi_{\lambda, \mu} \leq V_\lambda$ is isoclinic with its image $\pi_\lambda(s_4) W = \operatorname{im} \pi_\lambda(s_4) \Psi_{\lambda, \mu}$.
Then Theorem~\ref{thm: one layer general group} will imply the orbit $\mathcal{W}$ is a totally symmetric~$\operatorname{EITFF}_{\mathbb{R}}(d_\lambda,d_\mu,5)$.

By the hook length formula, there are $d_\mu = \frac{4!}{3 \cdot 2 \cdot 2 \cdot 1} = 2$ standard tableaux with shape $\mu$, and $d_\lambda = \frac{5!}{4 \cdot 3 \cdot 2 \cdot 1 \cdot 1} = 5$ standard tableaux with shape $\lambda$, all shown below.
\begin{center}
\begin{tabular}{cc}
$R_1$ & $R_2$ \\[5 pt]
\hspace{5 pt}
\begin{ytableau}
1 & 2 \\
3 & *(ltblue) 4 \\
\end{ytableau}
\hspace{5 pt}
&
\hspace{5 pt}
\begin{ytableau}
1 & 3 \\
2 & *(ltblue) 4 \\
\end{ytableau}
\hspace{5 pt}
\end{tabular}

\vspace{20 pt}

\begin{tabular}{ccccc}
\hspace{5 pt}
\begin{ytableau}
1 & 2 & *(ltpink) 3 \\
4 & *(ltblue) 5 \\
\end{ytableau}
\hspace{5 pt}
&
\hspace{5 pt}
\begin{ytableau}
1 & 2 & *(ltpink) 4  \\
3 & *(ltblue) 5 \\
\end{ytableau}
\hspace{5 pt}
&
\hspace{5 pt}
\begin{ytableau}
1 & 3 & *(ltpink) 4  \\
2 & *(ltblue) 5 \\
\end{ytableau}
\hspace{5 pt}
&
\hspace{5 pt}
\begin{ytableau}
1 & 2 & *(ltpink) 5  \\
3 & *(ltblue) 4 \\
\end{ytableau}
\hspace{5 pt}
&
\hspace{5 pt}
\begin{ytableau}
1 & 3 & *(ltpink) 5  \\
2 & *(ltblue) 4 \\
\end{ytableau}
\hspace{5 pt} \\[10 pt]
$T_1$ & $T_2$ & $T_3$ & $T_4$ & $T_5$
\end{tabular}
\end{center}
To express $\Psi_{\lambda,\mu} \colon V_\mu \to V_\lambda$ as a matrix, observe that $R_1^\lambda = T_4$ and $R_2^\lambda = T_5$.
(These are exactly the $\lambda$-tableaux with $n=5$ in the red box, and they necessarily have $n-1=4$ in the blue box.)
With the bases ordered as above, we have
\[
\Psi_{\lambda,\mu} = \left[ \begin{array}{cc}
0 & 0 \\
0 & 0 \\
0 & 0 \\
1 & 0 \\
0 & 1 
\end{array} \right].
\]
Then the cross-Gram $\Psi_{\lambda,\mu}^\top \pi_\lambda(s_4) \Psi_{\lambda,\mu}$ is the bottom-right $2\times 2$ submatrix of 
\[
\pi_\lambda(s_4) =
\left[ \begin{array}{rrrrr}
1 & 0 & 0 & 0 & 0 \\[5 pt]
0 & -\tfrac{1}{2} & 0 & \tfrac{\sqrt{3}}{2} & 0 \\[5 pt]
0 & 0 & -\tfrac{1}{2} & 0 & \tfrac{\sqrt{3}}{2} \\[5 pt]
0 & \tfrac{\sqrt{3}}{2} & 0 & \cellcolor{ltgreen} \tfrac{1}{2} & \cellcolor{ltgreen} 0 \\[5 pt]
0 & 0 & \tfrac{\sqrt{3}}{2} & \cellcolor{ltgreen} 0 & \cellcolor{ltgreen} \tfrac{1}{2} \\[5 pt]
\end{array} \right],
\]
where we have applied~\eqref{eq:Young's orthogonal form}.
Explicitly, the corner highlighted in green is determined by $\pi_\lambda(s_4) v_{T_4}$ and $\pi_\lambda(s_4) v_{T_5}$.
In~\eqref{eq:Young's orthogonal form}, we have $s_4 T_4 = T_2$ and $D_{T_4}(5,4) = 2$, so that 
\[
\pi_\lambda(s_4) v_{T_4} 
= \tfrac{1}{D_{T_4}(5,4)} v_{T_4} + \sqrt{ 1 - \tfrac{1}{D_{T_4}(5,4)^2} } v_{s_4 T_4}
= \tfrac{1}{2} v_{T_4} + \tfrac{\sqrt{3}}{2} v_{T_2}.
\]
Likewise, $s_4 T_5 = T_3$ and $D_{T_5}(5,4) = 2$, so that 
\[
\pi_\lambda(s_4) v_{T_5} 
= \tfrac{1}{D_{T_5}(5,4)} v_{T_5} + \sqrt{ 1 - \tfrac{1}{D_{T_5}(5,4)^2} } v_{s_4 T_5}
= \tfrac{1}{2} v_{T_5} + \tfrac{\sqrt{3}}{2} v_{T_2}.
\]
Thus, the cross-Gram matrix is
\[
\Psi_{\lambda,\mu}^\top \pi_\lambda(s_4) \Psi_{\lambda,\mu}
=
\left[ \begin{array}{rr}
\cellcolor{ltgreen} \tfrac{1}{2} & \cellcolor{ltgreen} 0 \\[5 pt]
\cellcolor{ltgreen} 0 & \cellcolor{ltgreen} \tfrac{1}{2} \\[5 pt]
\end{array} \right],
\]
where the diagonal entries are determined by the axial distance from the red box to the blue box.
Since the cross-Gram is a multiple of a unitary, the corresponding spaces are isoclinic, and the orbit is a totally symmetric $\operatorname{EITFF}(5,2,5)$.
In fact, it is the one from Example~\ref{ex: 5 2 5}.
\end{example}

\begin{example}
\label{ex: 16 6 6}
Next, consider $\lambda = (3,2,1)$ and $\mu = (3,1,1)$.
Their Young diagrams are shown below, where the red box belongs to $\lambda$ but not $\mu$, and where the removable boxes of $\mu$ are colored blue.
\begin{center}
\begin{ytableau}
~ & ~ & *(ltblue) ~  \\
~ &  *(ltpink) ~ \\
*(ltblue) ~
\end{ytableau}
\end{center}
Considering the following hook lengths, we have 
$d_\lambda = \frac{6!}{5\cdot 3 \cdot 3} = 16$ and $d_\mu = \frac{5!}{5 \cdot 2 \cdot 2} =6$.
\begin{center}
\begin{ytableau}
5 & 3 & 1 \\
3 & 1 \\
1
\end{ytableau}
\qquad
\begin{ytableau}
5 & 2 & 1 \\
2 \\
1
\end{ytableau}
\end{center}
As in the last example, we will show $W:= \operatorname{im} \Psi_{\lambda, \mu} \leq V_\lambda$ is isoclinic with $\pi_\lambda(s_5) W = \operatorname{im} \pi_\lambda(s_5) \Psi_{\lambda, \mu}$, and it will follow that the orbit of $W$ under $\pi_\lambda(S_6)$ is a totally symmetric~$\operatorname{EITFF}_{\mathbb{R}}(16,6,6)$.

The standard tableaux with shape $\mu$ are shown below, along with their embeddings in~$\operatorname{Tab}(\lambda)$.
There are $10$ additional standard tableaux with shape $\lambda$ that are not displayed, but they will not matter in this example.
\begin{center}
\begin{tabular}{cccccc}
$R_1$ & $R_2$ & $R_3$ & $R_4$ & $R_5$ & $R_6$ \\[5 pt]
\hspace{5 pt}
\begin{ytableau}
1 & 2 & *(ltblue) 3 \\
4 \\
*(ltblue) 5
\end{ytableau}
\hspace{5 pt}
&
\hspace{5 pt}
\begin{ytableau}
1 & 2 & *(ltblue) 4 \\
3 \\
*(ltblue) 5
\end{ytableau}
\hspace{5 pt}
&
\hspace{5 pt}
\begin{ytableau}
1 & 3 & *(ltblue) 4 \\
2 \\
*(ltblue) 5
\end{ytableau}
\hspace{5 pt}
&
\hspace{5 pt}
\begin{ytableau}
1 & 2 & *(ltblue) 5 \\
3 \\
*(ltblue) 4
\end{ytableau}
\hspace{5 pt}
&
\hspace{5 pt}
\begin{ytableau}
1 & 3 & *(ltblue) 5 \\
2 \\
*(ltblue) 4
\end{ytableau}
\hspace{5 pt}
&
\hspace{5 pt}
\begin{ytableau}
1 & 4 & *(ltblue) 5 \\
2 \\
*(ltblue) 3
\end{ytableau}
\hspace{5 pt} \\[25 pt]
\hspace{5 pt}
\begin{ytableau}
1 & 2 & *(ltblue) 3 \\
4 & *(ltpink) 6 \\
*(ltblue) 5
\end{ytableau}
\hspace{5 pt}
&
\hspace{5 pt}
\begin{ytableau}
1 & 2 & *(ltblue) 4 \\
3 & *(ltpink) 6 \\
*(ltblue) 5
\end{ytableau}
\hspace{5 pt}
&
\hspace{5 pt}
\begin{ytableau}
1 & 3 & *(ltblue) 4 \\
2 & *(ltpink) 6 \\
*(ltblue) 5
\end{ytableau}
\hspace{5 pt}
&
\hspace{5 pt}
\begin{ytableau}
1 & 2 & *(ltblue) 5 \\
3 & *(ltpink) 6 \\
*(ltblue) 4
\end{ytableau}
\hspace{5 pt}
&
\hspace{5 pt}
\begin{ytableau}
1 & 3 & *(ltblue) 5 \\
2 & *(ltpink) 6 \\
*(ltblue) 4
\end{ytableau}
\hspace{5 pt}
&
\hspace{5 pt}
\begin{ytableau}
1 & 4 & *(ltblue) 5 \\
2 & *(ltpink) 6 \\
*(ltblue) 3
\end{ytableau}
\hspace{5 pt} \\[20 pt]
$T_1$ & $T_2$ & $T_3$ & $T_4$ & $T_5$ & $T_6$
\end{tabular}
\end{center}
For each $i \in [6]$, $R_i^\lambda = T_i$ and $\Psi_{\lambda,\mu} v_{R_i} = v_{T_i}$ , so the cross-Gram matrix has entries
\[
\big[ \Psi_{\lambda,\mu}^\top \pi_\lambda(s_5) \Psi_{\lambda,\mu} \big]_{ij} 
= \big \langle \Psi_{\lambda,\mu}^\top \pi_\lambda(s_5) \Psi_{\lambda,\mu} v_{R_j}, v_{R_i} \big\rangle
= \big\langle \pi_\lambda(s_5) v_{T_j}, v_{T_i} \big\rangle,
\qquad i,j \in [6].
\]
When computing $\pi_\lambda(s_5) v_{T_j}$ with~\eqref{eq:Young's orthogonal form}, notice that $s_5 T_j \notin\{T_i : i \in [6] \}$ since $[s_5 T_j]_{22} = 5$ and $[T_i]_{22} = 6$ for $i,j \in [6]$.
Applying~\eqref{eq:Young's orthogonal form}, we find
\begin{align*}
\big[ \Psi_{\lambda,\mu}^\top \pi_\lambda(s_5) \Psi_{\lambda,\mu} \big]_{ij} 
= \big\langle \pi_\lambda(s_5) v_{T_j}, v_{T_i} \big\rangle
&= \Big\langle \tfrac{1}{D_{T_j}(6,5)} v_{T_j} + \sqrt{ 1 - \tfrac{1}{D_{T_j}(6,5)^2} }v_{s_5 T_j}, v_{T_i} \Big\rangle \\
&= \frac{\delta_{ij}}{D_{T_j}(6,5)}.
\end{align*}
Thus, the cross-Gram is diagonal, and each diagonal entry is determined by an axial distance from the red box to one of the two blue boxes.
Since these two axial distances are opposites, the diagonal has constant absolute value:
\[
\Psi_{\lambda,\mu}^\top \pi_\lambda(s_5) \Psi_{\lambda,\mu}
= \left[ \begin{array}{rrrrrr}
\tfrac{1}{2} & 0 & 0 & 0 & 0 & 0 \\
0 & \tfrac{1}{2} & 0 & 0 & 0 & 0 \\
0 & 0 & \tfrac{1}{2} & 0 & 0 & 0 \\
0 & 0 & 0 & -\tfrac{1}{2} & 0 & 0 \\
0 & 0 & 0 & 0 & -\tfrac{1}{2} & 0 \\
0 & 0 & 0 & 0 & 0 & -\tfrac{1}{2}
\end{array} \right].
\]
In particular, $W$ and $\pi_\lambda(s_5)W$ are isoclinic.
By Theorem~\ref{thm: one layer general group}, the orbit of $W$ under $\pi_\lambda(S_6)$ is a totally symmetric~$\operatorname{EITFF}(16,6,6)$.
\end{example}

\begin{figure}[b]
\begin{center}
\begin{tabular}{ccc}
%CASE I
\hspace{10 pt}
\begin{tikzpicture}

\draw[thick] (0,0) -- (1,0) -- (1,1) -- (0,1) -- cycle;

\filldraw[thick,fill=ltblue] (1,0) rectangle (1-0.2,0+0.2);
\filldraw[thick,fill=ltpink] (1,1) rectangle (1+0.2,1-0.2);

\draw[<->] (-0.2,0+0.02) -- (-0.2,1-0.02);

\node[] at (-0.5,0.5) {$a$};

\draw[<->] (0+0.02,1+0.2) -- (1-0.02,1+0.2);

\node[] at (0.5,1.5) {$b$};

\end{tikzpicture}
\hspace{10 pt}
&
%CASE II
\hspace{10 pt}
\begin{tikzpicture}

\draw[thick] (0,0) -- (1,0) -- (1,1) -- (0,1) -- cycle;

\filldraw[thick,fill=ltblue] (1,0) rectangle (1-0.2,0+0.2);
\filldraw[thick,fill=ltpink] (0,0) rectangle (0+0.2,0-0.2);

\draw[<->] (-0.2,0+0.02) -- (-0.2,1-0.02);

\node[] at (-0.5,0.5) {$b$};

\draw[<->] (0+0.02,1+0.2) -- (1-0.02,1+0.2);

\node[] at (0.5,1.5) {$a$};

\end{tikzpicture}
\hspace{10 pt}
&
%CASE III
\hspace{10 pt}
\begin{tikzpicture}

\draw[thick] (0,0) -- (1,0) -- (1,1) -- (2,1) -- (2,2) -- (0,2) -- cycle;

\filldraw[thick,fill=ltblue] (1,0) rectangle (1-0.2,0+0.2);
\filldraw[thick,fill=ltblue] (2,1) rectangle (2-0.2,1+0.2);
\filldraw[thick,fill=ltpink] (1,1) rectangle (1+0.2,1-0.2);

\draw[<->] (-0.2,0+0.02) -- (-0.2,1-0.02);
\draw[<->] (-0.2,1+0.02) -- (-0.2,2-0.02);

\node[] at (-0.5,0.5) {$c$};
\node[] at (-0.5,1.5) {$a$};

\draw[<->] (0+0.02,2+0.2) -- (1-0.02,2+0.2);
\draw[<->] (1+0.02,2+0.2) -- (2-0.02,2+0.2);

\node[] at (0.5,2.5) {$b$};
\node[] at (1.5,2.5) {$c$};

\end{tikzpicture} 
\hspace{10 pt} \\[5 pt]
(i) & (ii) & (iii)
\end{tabular}
\end{center}

\caption{
Illustration of the Young diagrams for cases (i)--(iii) of Theorem~\ref{thm: single layer}, where the red box belongs to $\lambda$ but not $\mu$, and where the removable boxes of $\mu$ are colored blue.
}
\label{fig: single layer Young}
\end{figure}
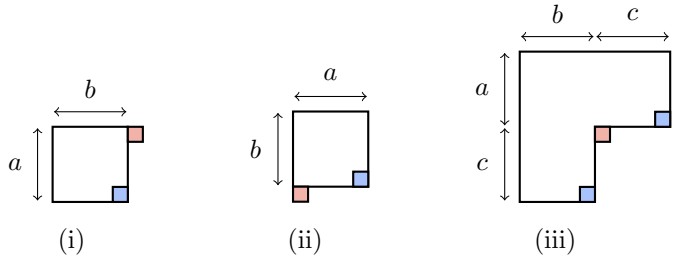

The following theorem extends Examples~\ref{ex: 5 5 2 redux} and~\ref{ex: 16 6 6} to an infinite family of totally symmetric EITFFs.

\begin{theorem}
\label{thm: single layer}
Select partitions $\lambda \vdash n$ and $\mu \in \lambda^\downarrow$, and let $W \leq V_\lambda$ be the $\pi_\mu$-isotypic component of $\pi_\lambda \big|_{S_{n-1}}$.
Assume $W \neq V_\lambda$.
Choose any transversal $\{ t_k \}_{k\in [n]}$  for $[n]$ in $S_n$ with $t_k n = k$ for each $k$.
Then the orbit $\mathcal{W} := \{ \pi_\lambda(t_k) W \}$ is a totally symmetric and real $\operatorname{ECTFF}$.
Furthermore, it is equi-isoclinic if and only if one of the following holds (cf.\ Figure~\ref{fig: single layer Young}):
	\begin{itemize}
	\item[(i)]
	for some integers $a \geq 2$ and $b\geq 1$, $\mu = (b^a)$ is depicted by an $a\times b$ rectangle, and $\lambda = (b+1,b^{a-1})$ adds a box to the first row,
	
	\smallskip
	\item[(ii)]
	for some integers $a \geq 2$ and $b\geq 1$, $\mu = (a^b)$ is 
	depicted by a $b\times a$ rectangle, and $\lambda = (a^b,1)$ adds a box to the bottom,
	
	\smallskip
	\item[(iii)]
	for some integers $a,b \geq 1$ and $c \geq 2$, $\mu = \big( (b+c)^a,b^c \big)$ is 
	depicted by an $(a+c)\times (b+c)$ rectangle with the bottom right $c\times c$ corner removed, and $\lambda = \big( (b+c)^a,b+1,b^{c-1} \big)$ adds a box in the corner.
	\end{itemize}
In that case, $\mathcal{W}$ is a totally symmetric $\operatorname{EITFF}_{\mathbb{R}}(d,r,n)$ with parameters given as follows in cases:
	\begin{itemize}
	\item[(i)--(ii)]
	$\displaystyle
	d = \frac{a}{a+b} rn,
	\qquad
	r =  (ab)! \prod_{k=0}^{a-1} \frac{ k! }{ (k+b)! } = (ab)! \prod_{k=0}^{b-1} \frac{ k! }{ (k+a)! },
	\qquad
	n = {ab+1}$,

	\smallskip
	\item[(iii)]
	\begin{align*}
	d &= \frac{c^2}{(a+c)(b+c)}rn, \\
	r &= (ab+ac+bc)! \prod_{k=0}^{c-1} \frac{ (k!)^2 }{ (a+k)!(b+k)! } \prod_{\ell=0}^{a-1} \frac{ (2c+\ell)! }{ (2c+b+\ell)! } \\
	&= (ab+ac+bc)! \prod_{k=0}^{c-1} \frac{ (k!)^2 }{ (a+k)!(b+k)! } \prod_{\ell=0}^{b-1} \frac{ (2c+\ell)! }{ (2c+a+\ell)! }, \\
	n &= ab+ac+bc+1.
	\end{align*}
	\end{itemize}
\end{theorem}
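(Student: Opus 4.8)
The plan is to run the single-layer machine of Theorem~\ref{thm: one layer general group} with $G = S_n$ acting on $X = [n]$, base point $x_0 = n$, stabilizer $H = S_{n-1}$, and $\rho = \pi_\lambda$. Since $\pi_\lambda$ is absolutely irreducible and real (it is given in Young's orthogonal form), and since $W$ is an isotypic component of $\pi_\lambda\big|_{S_{n-1}}$ and hence $\pi_\lambda(H)$-invariant, Theorem~\ref{thm: one layer general group} at once exhibits $\mathcal{W}$ as a real tight fusion frame with $\operatorname{Aut}(\mathcal{W}) \supseteq S_n$, i.e.\ totally symmetric; and because $S_n$ acts doubly transitively (a fortiori doubly homogeneously) on $[n]$, part~(a) upgrades this to an ECTFF. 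It then remains to decide equi-isoclinism. By transversal-independence and~\eqref{eq: action on subspaces}, the subspace $\pi_\lambda(s_{n-1})W$ is exactly the neighbour $W_{n-1}$ of $W = W_n$ in $\mathcal{W}$, so $\mathcal{W}$ is an EITFF precisely when this single pair is isoclinic: the forward implication is just the definition of EITFF, while the reverse is Theorem~\ref{thm: one layer general group}(b) applied with $g = s_{n-1} \in S_n \setminus S_{n-1}$. Everything thus reduces to one cross-Gram operator $\Gamma := \Psi_{\lambda,\mu}^\top \pi_\lambda(s_{n-1}) \Psi_{\lambda,\mu}$, and $\mathcal{W}$ is an EITFF if and only if $\Gamma^\top \Gamma$ is a scalar.

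The heart of the matter is computing $\Gamma$ in the basis $\{v_T : T \in \operatorname{Tab}(\lambda),\, T_{kl} = n\}$ of $W$ supplied by~\eqref{eq: Sn branching isotypic component}, where $(k,l) := \lambda - \mu$. Young's orthogonal form~\eqref{eq:Young's orthogonal form} writes $\pi_\lambda(s_{n-1})v_T$ as a diagonal term $\tfrac{1}{D_T(n,n-1)}v_T$ plus an off-diagonal term on $v_{s_{n-1}T}$. The key observation is that $s_{n-1} = (n-1\ n)$ interchanges the entries $n-1$ and $n$, so $(s_{n-1}T)_{kl} = n-1 \neq n$; hence $s_{n-1}T$ is not one of the basis tableaux of $W$, and the off-diagonal term is killed upon applying $\Psi_{\lambda,\mu}^\top$. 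Therefore $\Gamma$ is diagonal with entries $1/D_T(n,n-1)$, and $\Gamma^\top\Gamma$ is scalar exactly when $|D_T(n,n-1)|$ is constant over all $T \in \operatorname{Tab}(\lambda)$ with $T_{kl} = n$. Now $D_T(n,n-1)$ equals the superdiagonal $c := l-k$ of the removed box minus the superdiagonal of the box holding $n-1$; deleting $n$ from $T$ yields a standard tableau of shape $\mu$ whose maximal entry $n-1$ occupies a removable box of $\mu$, and conversely every removable box of $\mu$ is attained (fill the complementary shape by any standard tableau and place $n-1$ in the chosen corner). So $\mathcal{W}$ is equi-isoclinic if and only if the removed box $\lambda-\mu$ is equidistant, in superdiagonal, from all removable boxes of $\mu$.

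What remains is to classify the admissible pairs $(\lambda,\mu)$, subject to $W \neq V_\lambda$, i.e.\ $\lambda$ not a rectangle. Let $m$ be the number of removable boxes of $\mu$, equivalently its number of distinct parts. If $m=1$ then $\mu$ is a rectangle and the equidistance condition is vacuous; its two addable corners, extending the first row (case~(i)) or appending a new bottom row (case~(ii)), give the two families of Figure~\ref{fig: single layer Young}, which are transposes of one another and so produce identical parameters. If $m \geq 3$ the condition is impossible, since three or more distinct superdiagonals cannot be equidistant from a single value. The decisive case is $m=2$: writing $\mu = (p^a,q^c)$ with $p>q$, its removable corners have superdiagonals $p-a$ and $q-a-c$, while its addable corners sit at superdiagonals $p$, $q-a$, and $-(a+c)$. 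Equidistance forces $\lambda-\mu$ onto the midpoint superdiagonal $\tfrac12\big((p-a)+(q-a-c)\big)$, and matching this against the three admissible addable superdiagonals leaves only the middle corner $q-a$, which occurs exactly when $p = q+c$. Setting $b := q$ recovers $\mu = \big((b+c)^a, b^c\big)$ and $\lambda = \big((b+c)^a, b+1, b^{c-1}\big)$, i.e.\ case~(iii); the constraint $W \neq V_\lambda$ rules out $c=1$ (which makes $\lambda$ a rectangle), forcing $c \geq 2$.

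Finally, the parameters come from the hook length formula. Here $n = |\mu|+1$ and $r = \dim W = d_\mu$ are immediate, the displayed products for $r$ being the hook evaluations for the rectangular (cases~(i)--(ii)) and near-rectangular (case~(iii)) shapes~$\mu$. For $d = d_\lambda$ one writes $d_\lambda/d_\mu = n\cdot\big(\prod h_\mu\big)/\big(\prod h_\lambda\big)$ and tracks how inserting the box $\lambda-\mu$ alters hook lengths: only hooks in that box's row and column change, and the resulting ratio telescopes to $a/(a+b)$ in cases~(i)--(ii) and to $c^2/\big((a+c)(b+c)\big)$ in case~(iii), yielding the stated relations $d = \tfrac{a}{a+b}rn$ and $d = \tfrac{c^2}{(a+c)(b+c)}rn$. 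I expect the conceptual crux to be the equidistance classification of the third paragraph, in particular pinning down the unique $m=2$ family; by contrast, the case~(iii) hook bookkeeping is the most laborious step but entirely routine.
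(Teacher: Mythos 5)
Your proposal is correct and follows essentially the same route as the paper's proof: reduce to the single pair $W$, $\pi_\lambda(s_{n-1})W$ via Theorem~\ref{thm: one layer general group}(b), show the cross-Gram $\Psi_{\lambda,\mu}^\top\pi_\lambda(s_{n-1})\Psi_{\lambda,\mu}$ is diagonal with entries $1/D_{R^\lambda}(n,n-1)$ because $s_{n-1}$ moves $n$ out of the $\lambda-\mu$ box, translate equi-isoclinism into the condition that all removable boxes of $\mu$ lie at axial distance $\pm m$ from $\lambda-\mu$, and classify by the number of removable boxes (your midpoint-of-superdiagonals phrasing of the two-box case is an equivalent repackaging of the paper's sign argument). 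The only cosmetic difference is that you leave the constraint $a\geq 2$ in cases (i)--(ii) implicit in the blanket remark that $\lambda$ must not be a rectangle, whereas the paper states it explicitly; both treatments of the hook-length bookkeeping are equally terse.
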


As we will see, the requirement $W \neq V_\lambda$ precludes taking $a=1$ in (i) and (ii), as well as $c=1$ in (iii).
Table~\ref{tbl: Sn params} lists some small parameters of totally symmetric EITFFs whose existence follows from Theorem~\ref{thm: single layer}.
Fusion synthesis matrices for the first ten columns are included as ancillary files with the arXiv version of this paper.
As in Theorem~\ref{thm: one layer general group}, the subspace ensemble $\mathcal{W}$ in Theorem~\ref{thm: single layer} does not depend on the choice of transversal $\{t_k \}_{k \in [n]}$.
Choosing $t_k = (1\ 2\ \cdots \ n)^k$ shows $\mathcal{W}$ is \textit{harmonic} in the sense of~\cite{FIJM:23}.

\begin{remark}
\label{rem: nice parameters}
For some choices of $a,b,c$, the parameter formulas in Theorem~\ref{thm: single layer} are particularly nice.
If $a=1$ in (i) or (ii), then $(d,r,n) = (b,1,b+1)$, and the EITFF consists of the lines spanned by elements of a regular simplex in $\mathbb{R}^b$.
If $a=2$ in (i) or (ii), then $(d,r,n) = (C_{b+1},C_b,2b+1)$, where $C_b := \tfrac{1}{b+1} \binom{2b}{b}$ is a \textit{Catalan number}.
In particular, $d$ is not divisible by~$r$ in this case unless $b\in\{1,4\}$, since the recurrence relation $C_k = \tfrac{2(2k-1)}{k+1} C_{k-1}$ implies
\[
\tfrac{d}{r} = \tfrac{C_{b+1}}{C_b} = \tfrac{2(2b+1)}{b+2} = 4 - \tfrac{6}{b+2}.
\]
(In fact, this construction accounts for infinitely many choices of coset $\tfrac{d}{r} + \mathbb{Z}$.)
Next, if $b=2$ in (i) or (ii), then it is easier to describe the parameters of the Naimark complementary EITFF, namely, $(nr-d,r,n) = (C_{a+1},C_a,2a+1)$.
Finally, if $a=b=1$ in (iii), then 
$(d,r,n) = \Big( \tfrac{2c^2}{c+1} \binom{2c}{c}, \binom{2c}{c}, 2c+2 \Big)$.
\end{remark}

\begin{table}
\begin{center}
\begin{tabular}{c|rrrrrrrrrrrrr}
$d$ & 5 & 14 & 16 & 42 & 90 & 132 & 168 & 210 & 429 & 448 & 1430 & 2100 & 2112 \\
$r$ & 2 & 5 & 6 & 14 & 20 & 42 & 56 & 42 & 132 & 70 & 429 & 252 & 660 \\
$n$ & 5 & 7 & 6 & 9 & 8 & 11 & 9 & 10 & 13 & 10 & 15 & 12 & 12 \\[3 pt]
$\alpha$ & $\frac{1}{2}$ & $\frac{1}{2}$ & $\frac{1}{2}$ & $\frac{1}{2}$ & $\frac{1}{3}$ & $\frac{1}{2}$ & $\frac{1}{2}$ & $\frac{1}{3}$ & $\frac{1}{2}$ & $\frac{1}{4}$ & $\frac{1}{2}$ & $\frac{1}{5}$ & \rule[-1.8ex]{0pt}{0 pt} $\frac{1}{2}$  \\ \hline
$a$ & \rule{0pt}{2.5ex} 2 & 2 & 1 & 2 & 1 & 2 & 1 & 3 & 2 & 1 & 2 & 1 & 1 \\
$b$ & 2 & 3 & 1 & 4 & 1 & 5 & 2 & 3 & 6 & 1 & 7 & 1 & 3 \\
$c$ & & & 2 & & 3 & & 2 & & & 4 & & 5 & 2
\end{tabular}
\end{center}
\smallskip
\caption{
For each column above, Theorem~\ref{thm: single layer} constructs a totally symmetric $\operatorname{EITFF}_{\mathbb{R}}(d,r,n)$ with isoclinism parameter~$\alpha$.
If a parameter $c$ is given, apply (iii); otherwise, apply (i).
}
\label{tbl: Sn params}
\end{table}

Example~\ref{ex: 5 5 2 redux} illustrates Theorem~\ref{thm: single layer}(i) in the case where $a=b=2$.
Likewise, Example~\ref{ex: 16 6 6} is an instance of Theorem~\ref{thm: single layer}(iii), where $a=b=1$ and $c=2$.
These examples serve as inspiration for the following proof.
(Later, we will generalize other ideas in the proof below as Theorem~\ref{thm: even odd}.)

\begin{proof}[Proof of Theorem~\ref{thm: single layer}]
This construction is an instance of Theorem~\ref{thm: one layer general group}, which implies $\mathcal{W}$ is a totally symmetric ECTFF, and it is equi-isoclinic if and only if the subspaces $W$ and $\pi_\lambda(s_{n-1})W$ are isoclinic.
We test isoclinism with a cross-Gram matrix.
The columns of the $\operatorname{Tab}(\mu) \times \operatorname{Tab}(\lambda)$ matrix $\Psi_{\lambda,\mu}$ form an orthonormal basis for $W$, and the columns of $\pi_\lambda(s_{n-1}) \Psi_{\lambda,\mu}$ form an orthonormal basis for $\pi_\lambda(s_{n-1})W$.
Then $\Psi_{\lambda,\mu}^\top \pi_\lambda(s_{n-1}) \Psi_{\lambda,\mu}$ is a cross-Gram matrix, and for $R,S \in \operatorname{Tab}(\mu)$ the $(S,R)$ entry is
\[
 \langle \Psi_{\lambda,\mu}^\top \pi_\lambda(s_{n-1}) \Psi_{\lambda,\mu} v_R, v_S \rangle
= \langle \pi_\lambda(s_{n-1}) v_{R^\lambda}, v_{S^\lambda} \rangle.
\]
Here, \eqref{eq:Young's orthogonal form} gives
\[
\pi_\lambda(s_{n-1}) v_{R^\lambda}
= \frac{1}{D_{R^\lambda}(n,n-1)} v_{R^\lambda} + \sqrt{ 1 - \frac{1}{D_{R^\lambda}(n,n-1)^2} } v_{s_{n-1} R^\lambda }.
\]
To compute its inner product with $v_{S^\lambda}$, observe that $\delta_{R^\lambda,S^\lambda} = \delta_{R,S}$, and $s_{n-1} R^\lambda  \neq S^\lambda$ since these tableaux differ in the $\lambda-\mu$ entry.
Therefore,
\begin{equation}
\label{eq: cross-Gram}
\big \langle \Psi_{\lambda,\mu}^\top \pi_\lambda(s_{n-1}) \Psi_{\lambda,\mu} v_R, v_S \big \rangle
= \frac{1}{D_{R^\lambda}(n,n-1)} \delta_{R,S},
\qquad
R,S \in \operatorname{Tab}(\mu).
\end{equation}
In particular, the cross-Gram matrix is diagonal.
Its product with its adjoint is a multiple of the identity
if and only if the diagonal has constant absolute value, if and only if $| D_{R^\lambda}(n,n-1)|$ takes the same value for all $R \in \operatorname{Tab}(\mu)$.
To interpret this condition in terms of the underlying Young diagrams, observe that $n$ appears in box $(k,l):=\lambda - \mu$ of $R^\lambda$, while $n-1$ necessarily appears in a removable box of $\mu$.
Furthermore, every removable box of $\mu$ occurs in this way for some $R \in \operatorname{Tab}(\mu)$ by~\eqref{eq: removable boxes get n}.
Overall, $\mathcal{W}$ is equi-isoclinic if and only if 
there is an integer $m \geq 1$ such that every removable box of $\mu$ has axial distance $m$ or $-m$ to $(k,l)$.

As we now explain, this condition is satisfied for each of (i)--(iii).
In (i) and (ii), $\mu$ is a rectangle with only one removable box, and the condition is trivially satisfied.
In (iii), $\mu$ has exactly two removable boxes.
The first is at position $(a,b+c)$, with axial distance $-c$ from $(k,l) = (a+1,b+1)$.
The second is at position $(a+c,b)$, with axial distance $c$ from $(k,l)$.
Hence, each of (i)--(iii) yields a totally symmetric $\operatorname{EITFF}(d,n,r)$, where the given values of $d=d_\lambda$ and $r=d_\mu$ can be verified by straightforward (if tedious) application of the Hook length formula.

Conversely, suppose $\lambda$ and $\mu$ are chosen so that $\mathcal{W}$ is equi-isoclinic, and let $m$ be the common absolute value of the axial distance to any removable box of $\mu$ from $(k,l) = \lambda - \mu$.
Then every removable box of $\mu$ appears on one of two superdiagonals: $l-k-m$ or $l-k+m$.
Any superdiagonal contains at most one removable box of $\mu$, so $\mu$ has at most two removable boxes.
We now argue in cases.
First suppose $\mu$ has only one removable box.
Then $\mu$ is a constant sequence, and its Young diagram is a rectangle.
There are only two places to add a box and produce the Young diagram of $\lambda \in \mu^\uparrow$.
Depending on the choice, $(\lambda,\mu)$ is as in (i) or (ii), where the condition $a \geq 2$ follows from the constraint $W \neq V_\lambda$.
Now suppose $\mu$ has exactly two removable boxes.
Then $\mu$ is a sequence with exactly two values, and we can write $\mu = (a^{b+d},b^c)$ for some $a,b,c,d\geq 1$.
Its Young diagram has removable boxes at $(a,b+d)$ and $(a+c,b)$, as shown in blue below (with some extra red boxes).
\begin{center}
\begin{tikzpicture}

\draw[thick] (0,0) -- (1,0) -- (1,1) -- (2,1) -- (2,2) -- (0,2) -- cycle;

\filldraw[thick,fill=ltblue] (1,0) rectangle (1-0.2,0+0.2);
\filldraw[thick,fill=ltblue] (2,1) rectangle (2-0.2,1+0.2);
\filldraw[thick,fill=ltpink] (1,1) rectangle (1+0.2,1-0.2);
\filldraw[thick,fill=ltpink] (0,0) rectangle (0+0.2,0-0.2);
\filldraw[thick,fill=ltpink] (2,2) rectangle (2+0.2,2-0.2);

\draw[<->] (-0.2,0+0.02) -- (-0.2,1-0.02);
\draw[<->] (-0.2,1+0.02) -- (-0.2,2-0.02);

\node[] at (-0.5,0.5) {$c$};
\node[] at (-0.5,1.5) {$a$};

\draw[<->] (0+0.02,2+0.2) -- (1-0.02,2+0.2);
\draw[<->] (1+0.02,2+0.2) -- (2-0.02,2+0.2);

\node[] at (0.5,2.5) {$b$};
\node[] at (1.5,2.5) {$d$};

\end{tikzpicture} 
\end{center}
There are three possibilities for $(k,l) = \lambda - \mu$, shown in red above.
If $(k,l) = (1,b+d+1)$ were at the top, then the unequal axial distances would both be positive.
If $(k,l) = (a+c+1,1)$ were at the bottom, then the unequal axial distances would both be negative.
Neither can happen, so $(k,l) = (a+1,b+1)$ is in the corner.
Then the axial distances to the removable boxes are $-d$ and $c$.
These must be opposites, so $d=c$.
Finally, the constraint $W \neq V_\lambda$ ensures $c \geq 2$: when $c=1$, $\lambda$ has a rectangular Young diagram with just one removable box, and the branching rule implies $\pi_\lambda \big|_{S_{n-1}} \cong \pi_\mu$, so that $\dim V_\lambda = \dim V_\mu = \dim W$.
Overall, $(\lambda,\mu)$ is as in (iii).
This completes the proof.
\end{proof}

\section{Total symmetry: Multiple layers}
\label{sec: total symmetry multiple layers}

Next we consider totally symmetric subspace ensembles that arise from the multiple-layer construction of Theorem~\ref{thm: multiple layers general group}, obtaining additional infinite families of totally symmetric EITFFs in Theorems~\ref{thm: three parts} and~\ref{thm: four parts}.

To begin, we interpret Theorem~\ref{thm: multiple layers general group} for representations of the symmetric group, and find conditions under which the resulting subspace ensemble is equi-isoclinic.

\begin{theorem}
\label{thm: multiple layers}
Select a partition $\mu \vdash (n-1)$ and a subset $L \subseteq \mu^\uparrow$, and consider the representation $\pi_L := \bigoplus_{\lambda \in L} \pi_{\lambda}$ of $S_n$ on $V_L := \bigoplus_{\lambda \in L} V_{\lambda}$ with degree $d_L:=\sum_{\lambda \in L} d_{\lambda}$.
Next, let $W_L \leq V_L$ be the image of the isometry $\Psi_L \colon V_\mu \to V_L$ given by 
\[ \Psi_L(v) = \Big\{ \sqrt{d_{\lambda} / d_L } \,  \Psi_{\lambda,\mu} (v) \Big\}_{\lambda \in L}, \qquad v \in V_\mu. \]
Finally, choose any transversal $\{ t_k \}_{k\in [n]}$ for $[n]$ in $S_n$ with $t_k n = k$ for each $k$.
Then the orbit $\mathcal{W}_L := \{ \pi_L(t_k) W_L \}_{k\in [n]}$ is a totally symmetric $\operatorname{ECTFF}_{\mathbb{R}}(d_L,d_\mu,n)$.
Furthermore, $\mathcal{W}_L$ is equi-isoclinic if and only if $L$ satisfies
\begin{equation}
\label{eq: distance condition}
\exists \, \beta \geq 0 \text{ s.t.\ } 
\sum_{\lambda \in L} \frac{d_\lambda}{n d_\mu} \cdot \frac{ 1 }{ D\big( \lambda - \mu, (k,l) \big) } 
= 
\pm 
\beta
\qquad
\forall (k,l) \in \lfloor \mu \rfloor,
\end{equation}
in which case $\frac{ n d_\mu}{ d_L} \beta$ is the common isoclinism parameter of $\mathcal{W}_L$ and
\[
\beta = \sqrt{ \frac{d_L( n d_\mu - d_L)}{d_\mu^2 n^2 (n-1) } }.
\]
\end{theorem}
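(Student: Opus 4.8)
The plan is to recognize $\mathcal{W}_L$ as an instance of Theorem~\ref{thm: multiple layers general group} and then reduce the equi-isoclinic test to a single cross-Gram computation that essentially reuses the single-layer calculation \eqref{eq: cross-Gram}. First I would set $G = S_n$ acting naturally (hence faithfully) on $X = [n]$, with base point $x_0 = n$ and stabilizer $H = S_{n-1}$. The chosen representations $\{\pi_\lambda\}_{\lambda \in L}$ are pairwise inequivalent irreducibles because distinct partitions index inequivalent irreducibles of $S_n$. Since $\lambda \in \mu^\uparrow$ is equivalent to $\mu \in \lambda^\downarrow$, the branching rule makes $\pi_\mu$ a common constituent of each $\pi_\lambda\big|_{S_{n-1}}$, and \eqref{eq: explicit branching} identifies $\Psi_{\lambda,\mu}$ as an intertwining isometry. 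The weighted vertical concatenation $\Psi_L$ is then exactly the isometry appearing in Theorem~\ref{thm: multiple layers general group}, so that result applies verbatim: because the natural action of $S_n$ on $[n]$ is faithful and doubly transitive (hence doubly homogeneous), part~(a) gives that $\mathcal{W}_L$ is a totally symmetric, real $\operatorname{ECTFF}(d_L, d_\mu, n)$.

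For the equi-isoclinic criterion I would invoke part~(b) with the explicit choice $g = s_{n-1} = (n-1\ n) \in S_n \setminus S_{n-1}$, reducing the question to whether $W_L$ and $\pi_L(s_{n-1}) W_L$ are isoclinic. The relevant cross-Gram operator factors as $\Psi_L^\top \pi_L(s_{n-1}) \Psi_L = \sum_{\lambda \in L} \tfrac{d_\lambda}{d_L} \Psi_{\lambda,\mu}^\top \pi_\lambda(s_{n-1}) \Psi_{\lambda,\mu}$, and each summand is diagonal by \eqref{eq: cross-Gram}, with $(R,R)$ entry $1/D_{R^\lambda}(n, n-1)$ for $R \in \operatorname{Tab}(\mu)$. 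Since $n$ lands in the added box $\lambda - \mu$ while $n-1$ occupies whatever removable box $(k,l)$ of $\mu$ it sits in under $R$, this axial distance equals $D(\lambda-\mu, (k,l))$, which depends on $R$ only through that removable box. Summing over layers, the full cross-Gram is diagonal with $(R,R)$ entry $\sum_{\lambda \in L} \tfrac{d_\lambda}{d_L}\, D(\lambda-\mu,(k,l))^{-1}$, again depending on $R$ only through $(k,l)$.

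The next step is to translate constant modulus of this diagonal into \eqref{eq: distance condition}. By \eqref{eq: removable boxes get n} applied to $\mu$, every removable box $(k,l) \in \lfloor \mu \rfloor$ is realized as the home of $n-1$ for some standard $R$, so the spaces are isoclinic exactly when the displayed entry has the same absolute value across all $(k,l) \in \lfloor \mu \rfloor$. Factoring out the constant $\tfrac{n d_\mu}{d_L}$ rewrites this as the requirement that $\sum_{\lambda \in L} \tfrac{d_\lambda}{n d_\mu}\, D(\lambda - \mu, (k,l))^{-1}$ be $\pm \beta$ for a common $\beta \geq 0$, which is precisely \eqref{eq: distance condition}; in that case the common squared modulus, i.e.\ the isoclinism parameter, is $\tfrac{n^2 d_\mu^2}{d_L^2}\beta^2$. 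For the converse implication of the stated ``if and only if,'' note that $\pi_L(s_{n-1}) W_L$ is the ensemble member indexed by $n-1 \neq n$, so if $\mathcal{W}_L$ is an EITFF then this particular pair is forced to be isoclinic, returning us to the distance condition.

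Finally, the value of $\beta$ would follow not from the combinatorics but from the universal EITFF identity \eqref{eq: isoclinism parameter}: once $\mathcal{W}_L$ is known to be an $\operatorname{EITFF}_{\mathbb{R}}(d_L, d_\mu, n)$, its parameter must equal $(d_\mu n - d_L)/\big(d_L(n-1)\big)$, and equating this with $\tfrac{n^2 d_\mu^2}{d_L^2}\beta^2$ and solving for the nonnegative root yields the claimed formula for $\beta$. I expect the only delicate point to be the bookkeeping in the second paragraph---correctly identifying $D_{R^\lambda}(n,n-1)$ with the axial distance $D(\lambda-\mu,(k,l))$ and confirming via \eqref{eq: removable boxes get n} that every removable box of $\mu$ actually occurs---but this is inherited almost verbatim from the single-layer proof, so the genuinely new content is just the observation that superposing the diagonal single-layer cross-Grams keeps the operator diagonal with entries governed by the removable box of $\mu$ in which $n-1$ resides.
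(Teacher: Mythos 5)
Your proposal is correct and follows essentially the same route as the paper's proof: apply Theorem~\ref{thm: multiple layers general group} to get the totally symmetric ECTFF, reduce equi-isoclinism to the single pair $W_L$ and $\pi_L(s_{n-1})W_L$, compute the diagonal cross-Gram by superposing the single-layer formula~\eqref{eq: cross-Gram}, identify $D_{R^\lambda}(n,n-1)$ with $D(\lambda-\mu,(k,l))$ via~\eqref{eq: removable boxes get n}, and extract $\beta$ from~\eqref{eq: isoclinism parameter}. No gaps.
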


We discuss how Theorem~\ref{thm: multiple layers} relates to Theorem~\ref{thm: single layer} in Example~\ref{ex: single layer redux} below.
For simplicity, we do not impose $W_L \neq V_L$ in Theorem~\ref{thm: multiple layers}.
While the statement of Theorem~\ref{thm: multiple layers} could be modified to remove the global factor of $\tfrac{1}{n d_\mu}$ from the left-hand side of~\eqref{eq: distance condition}, this factor makes it easier to verify~\eqref{eq: distance condition} in practice.
Indeed, the hook length formula implies $\frac{ d_\lambda }{ n d_\mu }$ is the product of all hook lengths for $\mu$ divided by the product of all hook lengths for $\lambda$.
Many of these hook lengths are the same for $\mu$ and $\lambda \in \mu^\uparrow$ (and so they cancel out), and many others telescope away in the quotient. 
A version of Theorem~\ref{thm: multiple layers} was announced by the authors as Theorem~2 of~\cite{FIJM:ICASSP}, together with a promise to prove it in a later manuscript.
The formulation given above is more detailed, and its proof below settles our debt from~\cite{FIJM:ICASSP}.

\begin{proof}[Proof of Theorem~\ref{thm: multiple layers}]
This construction is an instance of Theorem~\ref{thm: multiple layers general group}, which implies $\mathcal{W}_L$ is a totally symmetric $\operatorname{ECTFF}(d,d_\mu,n)$, and it is an EITFF if and only if the subspaces $W_L$ and $\pi_L(s_{n-1}) W_L$ are isoclinic.
To determine isoclinism, we consider a cross-Gram matrix.
The operator $\Psi_L$ is an isometry onto $W_L$, and $\pi_L(s_{n-1}) \Psi_L$ is an isometry onto $\pi_L(s_{n-1}) W_L$.
Then $\Psi_L^\top \pi_L(s_{n-1}) \Psi_L \colon V_\mu \to V_\mu$ is a cross-Gram operator, where
\[ 
\Psi_L^\top \pi_L(s_{n-1}) \Psi_L = \sum_{\lambda \in L} \frac{d_{\lambda}}{d_L} \Psi_{\lambda,\mu}^\top \pi_{\lambda}(s_{n-1}) \Psi_{\lambda,\mu}. 
\]
By~\eqref{eq: cross-Gram}, the corresponding cross-Gram matrix has entries
\[
\big \langle \Psi_L^\top \pi_L(s_{n-1}) \Psi_L v_R, v_S \big \rangle
=
\frac{1}{d_L}
\sum_{\lambda \in L} \frac{d_{\lambda}}{D_{R^\lambda}(n,n-1) } \delta_{R,S},
\qquad
R,S \in \operatorname{Tab}(\mu).
\]
In particular, the cross-Gram matrix is diagonal.
Its product with its adjoint is a multiple of the identity if and only if its diagonal has constant absolute value, if and only if there exists $\beta \geq 0$ such that $\Big| \sum_{\lambda \in L} \frac{ d_\lambda }{ n d_\mu }\cdot \frac{1}{D_{R^\lambda}(n,n-1) } \Big| = \beta$ for every $R \in \operatorname{Tab}(\mu)$.
To see this is equivalent to~\eqref{eq: distance condition}, fix $R \in \operatorname{Tab}(\mu)$ and notice that $n$ always appears in the $\lambda - \mu$ box of $R^\lambda$, while $n-1$ appears in a removable box $(k,l) \in \lfloor \mu \rfloor$ whose location depends only on $R$ and not on $\lambda$.
Then $D_{R^\lambda}(n,n-1) = D\big(\lambda-\mu, (k,l) \big)$ for every $\lambda \in L$.
Furthermore, every removable box $(k,l) \in \lfloor \mu \rfloor$ occurs in this way for some $R \in \operatorname{Tab}(\mu)$ by~\eqref{eq: removable boxes get n}.
Overall, $\mathcal{W}_L$ is equi-isoclinic if and only if~\eqref{eq: distance condition} holds, where $\tfrac{ n d_\mu }{ d_L } \beta$ is the parameter of isoclinism for $W_L$ and $\pi_L(s_{n-1})W_L = \pi_L(t_{n-1}) W_L$.
Then the formula for $\beta$ follows from~\eqref{eq: isoclinism parameter}.
\end{proof}

\begin{example}
\label{ex: single layer redux}
Suppose $L = \{ \lambda \}$ is a singleton in Theorem~\ref{thm: multiple layers}.
Then $\mathcal{W}_L$ is precisely the ECTFF of Theorem~\ref{thm: single layer}.
Furthermore,~\eqref{eq: distance condition} is easily seen to be satisfied in each of Theorem~\ref{thm: single layer}(i)--(iii).
In (i) and (ii), $\mu$ has only one removable box, and so the quantity on the left-hand side of~\eqref{eq: distance condition} is trivially constant on $\lfloor \mu \rfloor$.
In (iii), the two removable boxes of $\mu$ are at opposite axial distances from $\lambda-\mu$, and so the respective quantities from the left-hand side of~\eqref{eq: distance condition} are opposites.
\end{example}

\begin{example}
\label{ex: orthogonal subspaces}
Let $\mu \vdash (n-1)$ be arbitrary, and take $L$ to be all of $\mu^\uparrow$ in Theorem~\ref{thm: multiple layers}.
Then $d_{\mu^\uparrow} = \sum_{\lambda \in \mu^\uparrow} d_\lambda = n d_\mu$, by the remark following Theorem~4 in~\cite{V:06}.
Consequently, the subspace ensemble $\mathcal{W}_{\mu^\uparrow}$ is an $\operatorname{ECTFF}(nd_\mu,d_\mu,n)$.
It follows that $\mathcal{W}_{\mu^\uparrow}$ consists of $n$ mutually orthogonal subspaces: since equality holds in~\eqref{eq: chordal welch}, the chordal distance between any two subspaces of $\mathcal{W}_{\mu^\uparrow}$ is exactly $r=\sqrt{d_\mu}$, and all the principal angles are right angles.
In particular, $\mathcal{W}_{\mu^\uparrow}$ is an EITFF.
\end{example}

As we will see in Theorem~\ref{thm: even odd} below, for any $\mu \vdash (n-1)$ there are either zero or exactly two choices of nonempty proper subsets $L \subset \mu^\uparrow$ that produce EITFFs in Theorem~\ref{thm: multiple layers}, and in the latter case the two choices are set complements.
The following explains the relation of the corresponding EITFFs.

\begin{lemma}
Given $\mu \vdash (n-1)$ and $L \subseteq \mu^\uparrow$, denote $L^c:=\mu^\uparrow \setminus L$.
Then the ECTFFs $\mathcal{W}_L$ and $\mathcal{W}_{L^c}$ produced by Theorem~\ref{thm: multiple layers} are Naimark complements.
In particular, $\mathcal{W}_L$ is equi-isoclinic if and only if $\mathcal{W}_{L^c}$ is equi-isoclinic.
\end{lemma}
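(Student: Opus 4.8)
The plan is to realize $\mathcal{W}_L$ and $\mathcal{W}_{L^c}$ as complementary pieces of the single ``saturated'' ensemble $\mathcal{W}_{\mu^\uparrow}$, whose fusion synthesis matrix is literally a unitary matrix. (I may assume $L$ is a proper, nonempty subset of $\mu^\uparrow$, the remaining cases being degenerate.) The crucial input is Example~\ref{ex: orthogonal subspaces}: taking $L = \mu^\uparrow$ in Theorem~\ref{thm: multiple layers} produces $n$ mutually orthogonal subspaces of dimension $d_\mu$, and since $\sum_{\lambda \in \mu^\uparrow} d_\lambda = n d_\mu = \dim V_{\mu^\uparrow}$, these subspaces exhaust $V_{\mu^\uparrow}$. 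Consequently the fusion synthesis matrix $M := \Phi^{\mu^\uparrow}$ of $\mathcal{W}_{\mu^\uparrow}$ is an $(n d_\mu) \times (n d_\mu)$ matrix whose columns form an orthonormal basis for $V_{\mu^\uparrow}$; that is, $M$ is a real orthogonal matrix, so $M^* M = I$.

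First I would fix the common transversal $\{ t_k \}_{k \in [n]}$ and note that $\pi_L$, $\pi_{L^c}$, and $\pi_{\mu^\uparrow}$ are all built from the same representations $\pi_\lambda$ on the same spaces $V_\lambda$. Decomposing $V_{\mu^\uparrow} = \bigoplus_{\lambda \in \mu^\uparrow} V_\lambda$ splits the rows of $M$ into blocks indexed by $\lambda$; let $M_L$ (resp.\ $M_{L^c}$) denote the sub-block of rows with $\lambda \in L$ (resp.\ $\lambda \in L^c$). The only bookkeeping point is that $\Psi_{\mu^\uparrow}$ and $\Psi_L$ use the same blocks $\Psi_{\lambda,\mu}$ but with global normalizations $\sqrt{d_\lambda/(n d_\mu)}$ and $\sqrt{d_\lambda/d_L}$, whose ratio is the $\lambda$-independent constant $\sqrt{n d_\mu / d_L}$. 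Since $\pi_L(t_k)$ is exactly the $L$-block of $\pi_{\mu^\uparrow}(t_k)$, this yields $\Phi^L = \sqrt{n d_\mu / d_L}\, M_L$ and likewise $\Phi^{L^c} = \sqrt{n d_\mu / d_{L^c}}\, M_{L^c}$.

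From here the conclusion is immediate. The fusion Gram matrices are $G_L = (\Phi^L)^* \Phi^L = \tfrac{n d_\mu}{d_L} M_L^* M_L$ and $G_{L^c} = \tfrac{n d_\mu}{d_{L^c}} M_{L^c}^* M_{L^c}$, while the unitarity of $M$ reads $M_L^* M_L + M_{L^c}^* M_{L^c} = M^* M = I$. Substituting $M_L^* M_L = \tfrac{d_L}{n d_\mu} G_L$ then gives
\[
G_{L^c} = \frac{n d_\mu}{d_{L^c}} \Bigl( I - \frac{d_L}{n d_\mu} G_L \Bigr).
\]
Since $r = d_\mu$ and $rn - d_L = n d_\mu - d_L = d_{L^c}$, the right-hand side is precisely the fusion Gram matrix of a Naimark complement of $\mathcal{W}_L$. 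By the unitary-equivalence characterization of fusion Gram matrices recorded in the preliminaries, $\mathcal{W}_{L^c}$ is therefore unitarily equivalent to the Naimark complement of $\mathcal{W}_L$ (and, by the symmetry of the roles of $L$ and $L^c$, vice versa). The ``in particular'' statement then follows from the fact, also recalled in the preliminaries, that any Naimark complement of an EITFF is again an EITFF, applied in both directions. I expect the main obstacle to be the scaling bookkeeping of the second paragraph, namely correctly reconciling the two normalizations and confirming that $\Phi^{\mu^\uparrow}$ is genuinely unitary via Example~\ref{ex: orthogonal subspaces}; once that is pinned down, the identity $M^* M = I$ does all of the remaining work.
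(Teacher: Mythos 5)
Your proposal is correct and follows essentially the same route as the paper: both decompose the fusion synthesis matrix of the saturated ensemble $\mathcal{W}_{\mu^\uparrow}$ into its $L$ and $L^c$ row blocks, use Example~\ref{ex: orthogonal subspaces} to see that this matrix is orthogonal with $d_{\mu^\uparrow} = n d_\mu$, and read off the Naimark relation $I = \tfrac{d_L}{n d_\mu}\Phi_L^\top\Phi_L + \tfrac{d_{L^c}}{n d_\mu}\Phi_{L^c}^\top\Phi_{L^c}$ from unitarity. Your scaling bookkeeping ($\Phi^L = \sqrt{n d_\mu/d_L}\,M_L$) matches the paper's normalization exactly.
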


\begin{proof}
Fix a transversal $\{ t_k \}_{k\in [n]}$ for $[n]$ in $S_n$, where $t_k n = k$ for each $k \in [n]$.
We continue in the notation of Theorem~\ref{thm: multiple layers}.
For each partition $\lambda$, we identify $V_\lambda \cong \mathbb{R}^{\operatorname{Tab}(\lambda)}$ and consider $\pi_\lambda$ to take values in the space of real $\operatorname{Tab}(\lambda) \times \operatorname{Tab}(\lambda)$ matrices.
Then $V_L \cong \mathbb{R}^{\cup_{\lambda \in L} \operatorname{Tab}(\lambda)}$, and $\Psi_L$ may be viewed as a $( \cup_{\lambda \in L} \operatorname{Tab}(\lambda) ) \times \operatorname{Tab}(\mu)$ matrix.
Analogously, by considering the disjoint union $\mu^\uparrow = L \cup L^c$, we may express
\[
\Psi_{\mu^\uparrow} = 
\left[ \begin{array}{c} 
\sqrt{ \tfrac{ d_L }{ d_{\mu^\uparrow} } } \Psi_L \\[8 pt]
\sqrt{ \tfrac{ d_{L^c} }{ d_{\mu^\uparrow} } } \Psi_{L^c} 
\end{array} \right].
\]
Then since $\pi_{\mu^\uparrow} = \pi_L \oplus \pi_{L^c}$, we have
\begin{align*}
\Phi_{\mu^\uparrow} &:=
\left[ \begin{array}{ccc}
\pi_{\mu^\uparrow}(t_1) \Psi_{\mu^\uparrow} & \cdots & \pi_{\mu^\uparrow}(t_n) \Psi_{\mu^\uparrow}
\end{array} \right] \\
&=
\left[ \begin{array}{ccc}
\sqrt{ \tfrac{ d_L }{ d_{\mu^\uparrow} } } \pi_L(t_1) \Psi_L & \cdots & \sqrt{ \tfrac{ d_L }{ d_{\mu^\uparrow} } } \pi_L(t_n) \Psi_L \\[10 pt]
\sqrt{ \tfrac{ d_{L^c} }{ d_{\mu^\uparrow} } } \pi_{L^c}(t_1) \Psi_{L^c} & \cdots & \sqrt{ \tfrac{ d_{L^c} }{ d_{\mu^\uparrow} } } \pi_{L^c}(t_n) \Psi_{L^c} 
\end{array} \right]
=: \left[ \begin{array}{ccc}
\sqrt{ \tfrac{ d_L }{ d_{\mu^\uparrow} } } \Phi_L \\[10 pt]
\sqrt{ \tfrac{ d_{L^c} }{ d_{\mu^\uparrow} } } \Phi_{L^c}
\end{array} \right],
\end{align*}
where $\Phi_{\mu^\uparrow}$, $\Phi_L$, and $\Phi_{L^c}$ are fusion synthesis matrices for $\mathcal{W}_{\mu^\uparrow}$, $\mathcal{W}_L$, and $\mathcal{W}_{L^c}$, respectively.
By Example~\ref{ex: orthogonal subspaces}, $\Phi_{\mu^\uparrow}$ is an orthogonal matrix and $d_{\mu^\uparrow} = n d_\mu$.
Thus,
\[
I = \Phi_{\mu^\uparrow}^\top \Phi_{\mu^\uparrow} 
= \tfrac{ d_L }{ n d_\mu } \Phi_L^\top \Phi_L + \tfrac{ d_{L^c} }{ n d_\mu } \Phi_{L^c}^\top \Phi_{L^c},
\]
so $\mathcal{W}_L$ and $\mathcal{W}_{L^c}$ are Naimark complements.
\end{proof}

\begin{example}
In Theorem~\ref{thm: single layer}, the EITFFs of types~(i) and~(ii) are Naimark complements after reversing the parameters $a$ and $b$.
For type~(iii), we have $|\mu^\uparrow| = 3$ and so a Naimark complement $\mathcal{W}_L$ is constructed using two layers in Theorem~\ref{thm: multiple layers}, namely, $L = \{ \mu + (1,b+c+1), \mu+(a+c+1,1) \}$.
\end{example}

%Subsection: Isoclinic partitions
\subsection{Isoclinic partitions}

\begin{definition}
We say a partition $\mu \vdash (n-1)$ is \textbf{isoclinic} if there exists a proper and nonempty subset $L \subset \mu^\uparrow$ for which the subspace ensemble $\mathcal{W}_L$ of Theorem~\ref{thm: multiple layers} is equi-isoclinic, hence an EITFF.
\end{definition}

\begin{lemma}
If $\mu \vdash (n-1)$ is an isoclinic partition, then so is its transpose $\mu'$.
\end{lemma}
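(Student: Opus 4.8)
The plan is to transfer the witnessing subset across the transpose map and check that it verifies the distance condition \eqref{eq: distance condition} of Theorem~\ref{thm: multiple layers}. Concretely, I would suppose $L \subset \mu^\uparrow$ is a proper nonempty subset for which $\mathcal{W}_L$ is equi-isoclinic, set $L' := \{ \lambda' : \lambda \in L\} \subseteq (\mu')^\uparrow$, and argue that $L'$ witnesses that $\mu'$ is isoclinic. The entire proof is a symmetry argument showing that \eqref{eq: distance condition} is invariant (up to a harmless global sign) under reflecting every Young diagram across its main diagonal.

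First I would record the relevant bookkeeping about transposition. Adding a box to $\mu$ in position $(i,j)$ to form $\lambda \in \mu^\uparrow$ corresponds, after reflection, to adding a box to $\mu'$ in position $(j,i)$; hence $\lambda \mapsto \lambda'$ is a bijection $\mu^\uparrow \to (\mu')^\uparrow$ carrying $\lambda - \mu = (i,j)$ to $\lambda' - \mu' = (j,i)$, so $L'$ is again proper and nonempty. Likewise transposition restricts to a bijection $\lfloor \mu \rfloor \to \lfloor \mu' \rfloor$ sending a removable box $(k,l)$ of $\mu$ to the removable box $(l,k)$ of $\mu'$. Finally, since the Young diagram of $\mu'$ is the reflection of that of $\mu$, the hook length formula gives $d_{\mu'} = d_\mu$ and $d_{\lambda'} = d_\lambda$ for each $\lambda \in \mu^\uparrow$.

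The crux is that axial distance negates under transposition: for any two box positions,
\[
D\big( (j,i), (l,k) \big) = (i-j) - (k-l) = - D\big( (i,j), (k,l) \big).
\]
Applying this with $(i,j) = \lambda - \mu$, the left-hand side of \eqref{eq: distance condition} for the pair $(\mu', L')$, evaluated at the removable box $(l,k) \in \lfloor \mu' \rfloor$, becomes
\[
\sum_{\lambda' \in L'} \frac{d_{\lambda'}}{n d_{\mu'}} \cdot \frac{1}{D\big( \lambda' - \mu', (l,k) \big)}
= - \sum_{\lambda \in L} \frac{d_{\lambda}}{n d_{\mu}} \cdot \frac{1}{D\big( \lambda - \mu, (k,l) \big)},
\]
using the dimension identities. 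By hypothesis the inner sum equals $\pm\beta$ for every $(k,l) \in \lfloor \mu \rfloor$, and its negative is again of the form $\pm\beta$; since $(k,l) \mapsto (l,k)$ ranges over all of $\lfloor \mu' \rfloor$, the pair $(\mu', L')$ satisfies \eqref{eq: distance condition} with the same constant $\beta$. By Theorem~\ref{thm: multiple layers} this makes $\mathcal{W}_{L'}$ equi-isoclinic, so $\mu'$ is isoclinic.

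I do not expect a real obstacle here: the statement is a reflection symmetry of a purely combinatorial condition, and the only care required is matching the two transpose bijections (on $\mu^\uparrow$ and on $\lfloor \mu \rfloor$) and observing that the overall sign flip is invisible to \eqref{eq: distance condition}, which constrains only the common absolute value through the $\pm\beta$.
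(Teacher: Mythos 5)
Your proposal is correct and follows essentially the same route as the paper: define $L' := \{\lambda' : \lambda \in L\}$, use $d_{\lambda'} = d_\lambda$, $d_{\mu'} = d_\mu$, the bijection $\lfloor \mu \rfloor \to \lfloor \mu' \rfloor$ given by $(k,l) \mapsto (l,k)$, and the sign flip of axial distances under transposition to see that~\eqref{eq: distance condition} is preserved up to a global sign that the $\pm\beta$ condition absorbs. No gaps.
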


\begin{proof}
We apply Theorem~\ref{thm: multiple layers}.
Since $\mu$ is isoclinic, there is a proper and nonempty subset $L \subset \mu^\uparrow$ for which~\eqref{eq: distance condition} holds.
Consider the proper and nonempty subset $L' := \{ \lambda' : \lambda \in L\} \subset (\mu')^\uparrow$.
For each $\lambda \in L$, if $\lambda - \mu =: (i,j)$ then $\lambda' - \mu' = (j,i)$ and $D\big( \lambda - \mu, (k,l) \big) = - D\big( \lambda' - \mu', (l,k) \big)$ for each $(k,l) \in \lfloor \mu \rfloor$.
Since $d_{\lambda'} = d_\lambda$ and $d_{\mu'} = d_\mu$, it follows that
\[
\sum_{\lambda \in L} \frac{d_\lambda}{n d_\mu} \cdot \frac{ 1 }{ D\big( \lambda - \mu, (k,l) \big) }
= -  \sum_{\lambda \in L} \frac{d_{\lambda'}}{n d_{\mu'}} \cdot \frac{ 1 }{ D\big( \lambda' - \mu', (l,k) \big) },
\qquad
(k,l) \in \lfloor \mu \rfloor.
\]
Then since $L$ satisfies~\eqref{eq: distance condition} and $\lfloor \mu' \rfloor = \{ (l,k) : (k,l) \in \lfloor \mu \rfloor \}$, the subspace ensemble $\mathcal{W}_{L'}$ corresponding to $L'$ in Theorem~\ref{thm: multiple layers} is equi-isoclinic.
\end{proof}

For any $\mu \vdash (n-1)$, every subset of $\mu^\uparrow$ delivers an ECTFF via Theorem~\ref{thm: multiple layers}.
However, the following theorem shows that at most two proper nonempty subsets of $\mu^\uparrow$ can deliver EITFFs.
Furthermore, there is no mystery about the identities of these sets, or even about the signs of the sums in~\eqref{eq: distance condition}.

\begin{figure}

\begin{center}
\begin{tikzpicture}

\draw[thick] (0,0) -- (1,0) -- (1,1) -- (2,1) -- (2,2) -- (3,2) -- (3,3) -- (4,3) -- (4,4) -- (0,4) -- cycle;

\filldraw[thick,fill=ltblue] (1,0) rectangle (1-0.2,0+0.2);
\filldraw[thick,fill=ltblue] (2,1) rectangle (2-0.2,1+0.2);
\filldraw[thick,fill=ltblue] (3,2) rectangle (3-0.2,2+0.2);
\filldraw[thick,fill=ltblue] (4,3) rectangle (4-0.2,3+0.2);

\filldraw[thick,fill=ltpink] (0,0) rectangle (0+0.2,0-0.2);
\filldraw[thick,fill=ltpink] (2,2) rectangle (2+0.2,2-0.2);
\filldraw[thick,fill=ltpink] (3,3) rectangle (3+0.2,3-0.2);
\filldraw[thick,fill=ltpink] (4,4) rectangle (4+0.2,4-0.2);

\node[right,blue] at (1+0.0,0+0.05) {\small{$(k_m,l_m)$}};
\node[right,blue] at (2+0.0,1+0.05) {\small{$(k_3,l_3)$}};
\node[right,blue] at (3+0.0,2+0.05) {\small{$(k_2,l_2)$}};
\node[right,blue] at (4+0.0,3+0.05) {\small{$(k_1,l_1)$}};

\node[right,red] at (0+0.1,0-0.35) {\small{$\lambda_{m+1}-\mu$}};
\node[right,red] at (2+0.1,2-0.35) {\small{$\lambda_3-\mu$}};
\node[right,red] at (3+0.1,3-0.35) {\small{$\lambda_2-\mu$}};
\node[right,red] at (4+0.2,4-0.15) {\small{$\lambda_1-\mu$}};

\node[] at (1.85+0,0.5+0) {$\cdot$};
\node[] at (1.85+0.07,0.5+0.07) {$\cdot$};
\node[] at (1.85+2*0.07,0.5+2*0.07) {$\cdot$};

\end{tikzpicture} 
\end{center}

\caption{Let $\mu$ be a partition with $m$ distinct parts.
Then we label $\lfloor \mu \rfloor = \{ (k_1,l_1),\dotsc,(k_m,l_m) \}$ by descending superdiagonal order, and $\mu^\uparrow = \{ \lambda_1,\dotsc,\lambda_{m+1}\}$ by descending superdiagonal order of differences with $\mu$.}

\label{fig: label order}

\end{figure}
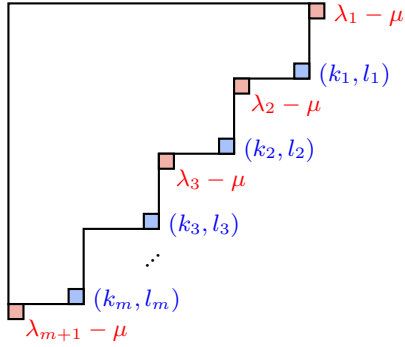

\begin{theorem}
\label{thm: even odd}
Suppose $\mu \vdash (n-1)$ is an isoclinic partition. 
Enumerate $\lfloor \mu \rfloor = \{ (k_1,l_1),\dotsc,(k_m,l_m)\}$ and 
$\mu^\uparrow = \{ \lambda_1,\dotsc,\lambda_{m+1} \}$ as in Figure~\ref{fig: label order}.
Then the following hold:
	\begin{itemize}
	\item[(a)]
	there are exactly two choices of nonempty proper subset $L \subset \mu^\uparrow$ for which the subspace ensemble $\mathcal{W}_L$ of Theorem~\ref{thm: multiple layers} is equi-isoclinic, namely, 
	\[
	L_0:=\{ \lambda_p : p \in [m+1] \text{ is even}\}
	\qquad
	\text{and}
	\qquad
	L_1:=\{ \lambda_p : p \in [m+1] \text{ is odd}\},
	\]
	
	\medskip
	
	\item[(b)]
	for $\delta \in \{0,1\}$, let $\frac{ n d_\mu }{ d_{L_\delta} } \beta$ be the common isoclinism parameter of $\mathcal{W}_{L_\delta}$, where $\beta \geq 0$;
	then
	\[
	\sum_{\lambda \in L_\delta} \frac{ d_\lambda }{ n d_\mu } \cdot \frac{ 1 }{ D \big( \lambda - \mu, (k_q,l_q) \big) } = (-1)^{q+\delta} \beta,
	\qquad
	q \in [m].
	\]
	\end{itemize}

\end{theorem}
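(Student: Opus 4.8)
The plan is to fix a nonempty proper $L\subset\mu^\uparrow$ for which $\mathcal{W}_L$ is equi-isoclinic and to recover both the set $L$ and the signs in \eqref{eq: distance condition} from the staircase boundary of $\mu$. Writing $S\subseteq[m+1]$ for the index set of $L$, I set $a_p:=d_{\lambda_p}/(n d_\mu)>0$, let $x_p$ be the content (superdiagonal) of the addable box $\lambda_p-\mu$ and $y_q$ the content of the removable box $(k_q,l_q)$, and define the rational function $\Phi_S(y):=\sum_{p\in S}a_p/(x_p-y)$. Since $D(\lambda_p-\mu,(k_q,l_q))=x_p-y_q$, the left-hand side of \eqref{eq: distance condition} at $(k_q,l_q)$ is exactly $\Phi_S(y_q)$, so equi-isoclinism says $\Phi_S(y_q)=\epsilon_q\beta$ for signs $\epsilon_q\in\{\pm1\}$ and a common $\beta\ge0$. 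Three inputs drive the argument: the contents strictly interlace as $x_1>y_1>x_2>y_2>\cdots>y_m>x_{m+1}$ (this is precisely the labeling of Figure~\ref{fig: label order}); every $a_p$ is positive; and the zero-sum identity $\sum_{p\in[m+1]}a_p/(x_p-y_q)=0$ holds for each $q$, which is exactly the orthogonality of $\mathcal{W}_{\mu^\uparrow}$ from Example~\ref{ex: orthogonal subspaces}. A degree count shows $\Phi_S$, whose numerator has degree $\le|S|-1\le m-1$, cannot vanish at all $m$ points $y_q$, so $\beta>0$.

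The engine is a monotonicity observation: because every $a_p>0$, one has $\Phi_S'(y)=\sum_{p\in S}a_p/(x_p-y)^2>0$, so $\Phi_S$ is strictly increasing on every interval free of its poles $\{x_p:p\in S\}$, while at each pole it drops from $+\infty$ to $-\infty$. By interlacing, the open interval $(y_{q+1},y_q)$ contains the single content $x_{q+1}$ and nothing else. If $q+1\notin S$ this interval is pole-free, so $\Phi_S(y_{q+1})<\Phi_S(y_q)$; as both values lie in $\{\pm\beta\}$ with $\beta>0$, this forces $\epsilon_q=+1$ and $\epsilon_{q+1}=-1$. The remaining case $q+1\in S$ is the main obstacle, since then a pole sits between $y_{q+1}$ and $y_q$ and monotonicity gives nothing directly. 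I resolve it by passing to the complement: the zero-sum identity gives $\Phi_{S^c}(y_q)=-\Phi_S(y_q)$ at every $y_q$, and since $q+1\notin S^c$ the monotonicity argument applies to $\Phi_{S^c}$ and yields $\epsilon_q=-1$, $\epsilon_{q+1}=+1$. Either way $\epsilon_q=-\epsilon_{q+1}$, so the signs alternate. The same pole-free/complement dichotomy at the two ends—using that $\Phi_S$ is positive to the left of all its poles and negative to their right—pins $\epsilon_1$ to membership of $1$ and $\epsilon_m$ to membership of $m+1$. Consequently $\epsilon_q=(-1)^{q+\delta}$ for a single $\delta\in\{0,1\}$.

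Finally I read off the set from the signs. The dichotomies above say exactly that for $2\le j\le m$ one has $j\in S$ iff $\epsilon_{j-1}=-1$, with the endpoint statements $1\in S\iff\epsilon_1=+1$ and $m+1\in S\iff\epsilon_m=-1$. Substituting $\epsilon_q=(-1)^{q+\delta}$ collapses every one of these into the single congruence $j\in S\iff j\equiv\delta\pmod2$, so $S=L_\delta$. This shows the only possible equi-isoclinic choices are $L_0$ and $L_1$. Because $\mu$ is isoclinic, at least one of these works; since $L_0^c=L_1$, Lemma~\ref{lem: complements} shows the other works too, so there are exactly two, giving (a). For (b), applying the sign formula to $L_\delta$ gives $\Phi_{L_\delta}(y_q)=(-1)^{q+\delta}\beta$, which is the stated identity once $\beta$ is identified with the constant in the isoclinism parameter; the two values of $\beta$ for $L_0$ and $L_1$ in fact agree, since $d_{L_0}+d_{L_1}=n d_\mu$ makes the formula for $\beta$ in Theorem~\ref{thm: multiple layers} invariant under $L\mapsto L^c$.
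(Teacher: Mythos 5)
Your proof is correct, and it runs on the same engine as the paper's: positivity of the dimensions $d_{\lambda_p}$, the interlacing of addable and removable boxes encoded in Figure~\ref{fig: label order}, a monotonicity inequality comparing the quantity in~\eqref{eq: distance condition} at consecutive removable boxes, and passage to the complement $L^c$ to handle the obstructed case. The differences are in packaging rather than substance. Where the paper proves the term-by-term inequality~\eqref{eq: even odd 1} by a case analysis on the signs of the axial distances and then derives a contradiction from ``three distinct reals cannot share one absolute value,'' you encode the same inequality as strict monotonicity of the rational function $\Phi_S(y)=\sum_{p\in S}a_p/(x_p-y)$ on pole-free intervals; and where the paper transfers the no-adjacent-terms property from $L^c$ to $L$ via Lemma~\ref{lem: complements}, you use the pointwise identity $\Phi_{S^c}(y_q)=-\Phi_S(y_q)$, which you correctly extract from the orthogonality in Example~\ref{ex: orthogonal subspaces} (the cross-Gram of $\mathcal{W}_{\mu^\uparrow}$ is diagonal with those sums as entries, so orthogonality forces each sum to vanish). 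Your version buys a more streamlined logic: the sign pattern $\epsilon_q=(-1)^{q+\delta}$ and the identification $S=L_\delta$ come out of a single local dichotomy at each gap $(y_{q+1},y_q)$ and at the two ends, so parts (a) and (b) are proved simultaneously rather than in the paper's two passes; the degree-count argument for $\beta>0$ is a worthwhile addition, since your dichotomies genuinely need it (the paper's contradictions do not). The only points resting on unstated but standard facts are the strict interlacing of contents $x_1>y_1>x_2>\cdots>y_m>x_{m+1}$ (which the paper also uses implicitly) and the translation $D(\lambda_p-\mu,(k_q,l_q))=x_p-y_q$; both are fine.
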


\begin{proof}
For (a), let $L \subset \mu^\uparrow$ be a nonempty and proper subset of $\mu^\uparrow$ such that $\mathcal{W}_L$ is equi-isoclinic, i.e.,~\eqref{eq: distance condition} holds.
We will prove that the complement $L^c:=\mu^\uparrow \setminus L$ contains no adjacent terms from the sequence $\lambda_1,\dotsc,\lambda_{m+1}$.
Since the Naimark complement $\mathcal{W}_{L^c}$ is also equi-isoclinic, it will follow that $L$ contains no adjacent terms from this sequence either; thus, the sequence $\lambda_1,\dotsc,\lambda_{m+1}$ alternates between elements of $L$ and $L^c$, and $\{ L, L^c \} = \{ L_0, L_1 \}$.

To begin, we fix $p \in [m+1]$ and prove the crucial inequality
\begin{equation}
\label{eq: even odd 1}
\frac{ d_{\lambda_p} }{ D\big(\lambda_p - \mu, (k_{q-1},l_{q-1}) \big) } > \frac{ d_{\lambda_p} }{ D\big(\lambda_p - \mu, (k_{q},l_{q}) \big) },
\qquad
1 < q \neq p.
\end{equation}
To see this, 
fix $1 < q \neq p$ and observe that
\[
{D\big(\lambda_p - \mu, (k_{q-1},l_{q-1}) \big)} < {D\big(\lambda_p - \mu, (k_{q},l_{q}) \big)},
\qquad
q > 1.
\]
If $p > q$, then ${D\big(\lambda_p - \mu, (k_{q-1},l_{q-1}) \big)} < {D\big(\lambda_p - \mu, (k_{q},l_{q}) \big)} < 0$, and~\eqref{eq: even odd 1} follows.
Likewise, if $p \leq q-1$, then $0 < {D\big(\lambda_p - \mu, (k_{q-1},l_{q-1}) \big)} < {D\big(\lambda_p - \mu, (k_{q},l_{q}) \big)}$, and~\eqref{eq: even odd 1} follows again.
This proves the claim.

Now suppose $L^c$ contains one of the middle terms $\lambda_{q}$ with $1 < q < m+1$.
Then any $\lambda_p \in L$ has $p \neq q$, and summing over all such $p$ in~\eqref{eq: even odd 1} shows that
\begin{equation}
\label{eq: even odd 2}
\sum_{\lambda \in L} \frac{ d_{\lambda} }{ D\big(\lambda - \mu, (k_{q-1},l_{q-1}) \big) }
>
\sum_{\lambda \in L} \frac{ d_{\lambda} }{ D\big(\lambda - \mu, (k_{q},l_{q}) \big) },
\qquad
1 < q < m+1, \ \lambda_{q} \in L^c.
\end{equation}
If $L^c$ were to contain two adjacent middle terms $\lambda_q, \lambda_{q+1}$ with $1 < q < m$, it would follow that
\[
\sum_{\lambda \in L} \frac{ d_{\lambda} }{ D\big(\lambda - \mu, (k_{q-1},l_{q-1}) \big) }
>
\sum_{\lambda \in L} \frac{ d_{\lambda} }{ D\big(\lambda - \mu, (k_{q},l_{q}) \big) }
>
\sum_{\lambda \in L} \frac{ d_{\lambda} }{ D\big(\lambda - \mu, (k_{q+1},l_{q+1}) \big) }.
\]
This cannot occur since three distinct real numbers cannot share one absolute value in~\eqref{eq: distance condition}.
It remains to show that $L^c$ contains neither $\{ \lambda_1, \lambda_2 \}$ nor $\{ \lambda_m, \lambda_{m+1} \}$.

For the first case, suppose $\lambda_1,\dotsc,\lambda_q \in L^c$.
Then any $\lambda_p \in L$ has $p > q$, so that $D\big( \lambda_p - \mu, (k_q,l_q) \big) < 0$.
Summing over all $\lambda_p \in L$, we obtain
\begin{equation}
\label{eq: even odd 3}
\sum_{\lambda \in L} \frac{ d_{\lambda} }{ D\big(\lambda - \mu, (k_{q},l_{q}) \big) } < 0,
\qquad
\lambda_1,\dotsc,\lambda_q \in L^c.
\end{equation}
If $L^c$ were to contain both $\lambda_1$ and $\lambda_2$, then we could take 
$q=2$
in~\eqref{eq: even odd 1} and sum over all $\lambda_p \in L$ to conclude
\[
0 
> \sum_{\lambda \in L} \frac{ d_{\lambda} }{ D\big(\lambda - \mu, (k_1,l_1) \big) } 
> \sum_{\lambda \in L} \frac{ d_{\lambda} }{ D\big(\lambda - \mu, (k_2,l_2) \big) }.
\]
This is impossible since distinct negative numbers cannot share an absolute value in~\eqref{eq: distance condition}.

The story for the second case is similar.
If $\lambda_q,\dotsc,\lambda_{m+1} \in L^c$, then any $\lambda_p \in L$ has $p < q$, so that $D\big( \lambda - \mu, (k_{q-1},l_{q-1}) \big) > 0$.
Hence,
\begin{equation}
\label{eq: even odd 4}
\sum_{\lambda \in L} \frac{ d_{\lambda} }{ D\big(\lambda - \mu, (k_{q-1},l_{q-1}) \big) } > 0,
\qquad
\lambda_q,\dotsc,\lambda_{m+1} \in L^c.
\end{equation}
If $L^c$ contained both $\lambda_m$ and $\lambda_{m+1}$, and if $m$ was greater than $1$ (so that  we were not in the first case), then we could take $q=m$ in~\eqref{eq: even odd 1} and sum over all $\lambda_p \in L$ to conclude
\[
\sum_{\lambda \in L} \frac{ d_{\lambda} }{ D\big(\lambda - \mu, (k_{m-1},l_{m-1}) \big) } 
> \sum_{\lambda \in L} \frac{ d_{\lambda} }{ D\big(\lambda - \mu, (k_m,l_m) \big) }
> 0,
\]
which contradicts~\eqref{eq: distance condition}.
This completes the proof of (a).

For (b), we prove the case where $\delta = 0$; a similar argument applies when $\delta = 1$.
Take $L = L_0$ above, so that $L^c = L_1$.
It suffices to establish the sign of each sum
\[
c_q:=\sum_{\lambda \in L} \frac{ d_\lambda }{ n d_\mu } \cdot \frac{ 1 }{ D\big(\lambda - \mu, (k_{q},l_{q}) \big) },
\qquad q \in [m],
\]
since each $c_q = \pm \beta$ by~\eqref{eq: distance condition} and (a).
We have
$c_1 < 0$ by~\eqref{eq: even odd 3} since $\lambda_1 \in L^c$.
Next, if $3 \leq q \leq m$ is odd, then~\eqref{eq: even odd 2} implies $c_{q-1} > 0$ and $c_q < 0$.
So far we have established the sign of $c_q$ for every $q \in [m]$ except for $c_m$ when $m$ is even.
In that remaining case, we have $\lambda_{m+1} \in L^c$, and~\eqref{eq: even odd 4} yields $c_m > 0$.
\end{proof}

The remainder of this section is devoted to identifying isoclinic partitions with the following characterization, which follows immediately from Theorems~\ref{thm: multiple layers} and~\ref{thm: even odd}.

\begin{corollary}
\label{cor: isoclinic partition}
Given $\mu \vdash (n-1)$, enumerate $\lfloor \mu \rfloor = \{ (k_1,l_1),\dotsc,(k_m,l_m)\}$ and 
$\mu^\uparrow = \{ \lambda_1,\dotsc,\lambda_{m+1} \}$ as in Figure~\ref{fig: label order}.
Define $L_0:=\{ \lambda_p : p \in [m+1] \text{ is even}\}$ and $L_1:=\{ \lambda_p : p \in [m+1] \text{ is odd} \}$.
Then for either choice of $\delta \in \{0,1\}$, $\mu$ is an isoclinic partition if and only if
\begin{equation}
\label{eq: isoclinic partition}
\exists \, \beta \geq 0 \text{ s.t.\ } 
\sum_{\lambda \in L_\delta} \frac{ d_\lambda }{ n d_\mu } \cdot \frac{ 1 }{ D\big( \lambda - \mu, (k_q,l_q) \big) } = (-1)^{q+\delta} \beta
\qquad
\forall q \in [m].
\end{equation}
In that case, the subspace ensemble $\mathcal{W}_{L_\delta}$ from Theorem~\ref{thm: multiple layers} is a totally symmetric $\operatorname{EITFF}_{\mathbb{R}}(d_{L_\delta},d_\mu,n)$, and $\frac{ n d_\mu }{ d_{L_\delta} } \beta$ is its common parameter of isoclinism, where
\[
\beta = \sqrt{ \frac{d_{L_\delta}( n d_\mu-d_{L_\delta})}{d_\mu^2 n^2 (n-1) } }.
\]
\end{corollary}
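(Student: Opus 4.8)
The plan is to assemble the corollary directly from Theorem~\ref{thm: multiple layers} and Theorem~\ref{thm: even odd}, proving, for each fixed $\delta \in \{0,1\}$, the two directions of the asserted biconditional. The first thing I would record is that both $L_0$ and $L_1$ are automatically nonempty and proper subsets of $\mu^\uparrow$. Indeed, since $\mu \vdash (n-1)$ has at least one removable box, we have $m \geq 1$, so the index set $[m+1]$ contains both the odd value $1$ and the even value $2$; thus $L_1 \ni \lambda_1$ and $L_0 \ni \lambda_2$ are nonempty, and since $L_0$ and $L_1$ partition $\mu^\uparrow$ disjointly, neither equals all of $\mu^\uparrow$. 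This legitimizes using either $L_\delta$ as a witness in the definition of an isoclinic partition.

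For the reverse implication I would suppose \eqref{eq: isoclinic partition} holds for the fixed $\delta$. Its right-hand side $(-1)^{q+\delta}\beta$ equals $\pm \beta$ for every $q \in [m]$, so the set $L = L_\delta$ satisfies the distance condition \eqref{eq: distance condition} of Theorem~\ref{thm: multiple layers}. That theorem then forces $\mathcal{W}_{L_\delta}$ to be equi-isoclinic, and since $L_\delta$ is proper and nonempty, $\mu$ is an isoclinic partition by definition. I would emphasize that this direction is insensitive to the alternating sign structure: only the constancy of the absolute value across $\lfloor \mu \rfloor$ is needed.

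For the forward implication I would suppose $\mu$ is isoclinic, so by definition some proper nonempty $L \subset \mu^\uparrow$ yields an equi-isoclinic $\mathcal{W}_L$. Theorem~\ref{thm: even odd}(a) identifies the only two such subsets as precisely $L_0$ and $L_1$, so $L_\delta$ is one of them and $\mathcal{W}_{L_\delta}$ is equi-isoclinic; Theorem~\ref{thm: even odd}(b), applied with this same $\delta$, then supplies the exact sign $(-1)^{q+\delta}\beta$ of each sum, which is exactly \eqref{eq: isoclinic partition}. The concluding ``in that case'' clause---that $\mathcal{W}_{L_\delta}$ is a totally symmetric real $\operatorname{EITFF}(d_{L_\delta},d_\mu,n)$ with common isoclinism parameter $\frac{n^2 d_\mu^2}{d_{L_\delta}^2}\beta^2$ and the stated closed form for $\beta$---is read off directly from the corresponding conclusions of Theorem~\ref{thm: multiple layers}.

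Since both inputs are already established, I do not expect a serious obstacle; the corollary genuinely follows immediately, and the work is bookkeeping. The one point demanding care is that the statement asserts the equivalence for \emph{either} value of $\delta$, so one must invoke the full strength of Theorem~\ref{thm: even odd}(b)---its determination of the precise alternating signs, not merely the existence of a constant absolute value---to guarantee that \eqref{eq: isoclinic partition} holds with the specific sign $(-1)^{q+\delta}$ simultaneously for $\delta = 0$ and $\delta = 1$.
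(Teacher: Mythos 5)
Your proposal is correct and matches the paper's intent exactly: the paper offers no separate proof, stating only that the corollary ``follows immediately from Theorem~\ref{thm: multiple layers} and Theorem~\ref{thm: even odd},'' and your assembly of the two directions (the distance condition~\eqref{eq: distance condition} for the reverse implication, parts (a) and (b) of Theorem~\ref{thm: even odd} for the forward implication, plus the routine check that $L_0$ and $L_1$ are nonempty and proper) is precisely that assembly. Your closing remark about needing the exact signs from Theorem~\ref{thm: even odd}(b), not just the constant absolute value, is a fair and accurate observation.
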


\subsection{Examples}
We organize our search for isoclinic partitions in terms of the number $m$ of distinct parts.
When an isoclinic partition $\mu$ has $m=1$ or $m=2$ distinct parts, Theorem~\ref{thm: even odd} implies one of the two resulting nontrivial EITFFs must be constructed using a single layer, and so $\mu$ is described in Theorem~\ref{thm: single layer}.
For partitions with $m=3$ distinct parts, we have the following characterization.

\begin{theorem}
\label{thm: three parts}
For $a,b,c,e,f,g \in \mathbb{N}$, the partition $\mu := \big({(e+f+g)}^a,{(e+f)}^b,e^c\big)$ shown below (with some extra red boxes) is isoclinic if and only if 
\begin{equation}
\label{eq: three parts}
c = \frac{2af}{b+g} + f
\qquad
\text{and}
\qquad
e = \frac{(a+g)f}{b} - c.
\end{equation}
	
\begin{center}
\begin{tikzpicture}

\draw[thick] (0,0) -- (1,0) -- (1,1) -- (2,1) -- (2,2) -- (3,2) -- (3,3) -- (0,3) -- cycle;

\filldraw[thick,fill=ltblue] (1,0) rectangle (1-0.2,0+0.2);
\filldraw[thick,fill=ltblue] (2,1) rectangle (2-0.2,1+0.2);
\filldraw[thick,fill=ltblue] (3,2) rectangle (3-0.2,2+0.2);
\filldraw[thick,fill=ltpink] (1,1) rectangle (1+0.2,1-0.2);
\filldraw[thick,fill=ltpink] (3,3) rectangle (3+0.2,3-0.2);

\draw[<->] (-0.2,0+0.02) -- (-0.2,1-0.02);
\draw[<->] (-0.2,1+0.02) -- (-0.2,2-0.02);
\draw[<->] (-0.2,2+0.02) -- (-0.2,3-0.02);

\node[] at (-0.5,0.5) {$c$};
\node[] at (-0.5,1.5) {$b$};
\node[] at (-0.5,2.5) {$a$};

\draw[<->] (0+0.02,3+0.2) -- (1-0.02,3+0.2);
\draw[<->] (1+0.02,3+0.2) -- (2-0.02,3+0.2);
\draw[<->] (2+0.02,3+0.2) -- (3-0.02,3+0.2);

\node[] at (0.5,3.5) {$e$};
\node[] at (1.5,3.5) {$f$};
\node[] at (2.5,3.5) {$g$};

\end{tikzpicture} 
\end{center}
\end{theorem}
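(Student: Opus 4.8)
The plan is to specialize Corollary~\ref{cor: isoclinic partition} to the case $m=3$. First I would record the combinatorial data of $\mu=\big((e+f+g)^a,(e+f)^b,e^c\big)$. Ordered by descending content as in Figure~\ref{fig: label order}, the removable boxes lie at $(k_1,l_1)=(a,e+f+g)$, $(k_2,l_2)=(a+b,e+f)$, $(k_3,l_3)=(a+b+c,e)$, with contents $e+f+g-a>e+f-a-b>e-a-b-c$, while the added boxes $\lambda_p-\mu$ have contents $x_1=e+f+g$, $x_2=e+f-a$, $x_3=e-a-b$, $x_4=-a-b-c$. Choosing $\delta=1$ so that $L_1=\{\lambda_1,\lambda_3\}$, the relevant axial distances $D\big(\lambda_p-\mu,(k_q,l_q)\big)=x_p-(\text{content of }(k_q,l_q))$ are $a,\ a+b+g,\ a+b+c+f+g$ for $\lambda_1$ and $-(b+f+g),\ -f,\ c$ for $\lambda_3$ as $q=1,2,3$.

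Next I would compute the coefficients $w_p:=d_{\lambda_p}/(n d_\mu)$ that appear in \eqref{eq: isoclinic partition}. By the remark following Theorem~\ref{thm: multiple layers}, $w_p$ is the quotient of the hook-length products of $\mu$ and of $\lambda_p$; after the telescoping of that quotient (equivalently, by a residue computation at $z=x_p$ for the rational function $\prod_q(z-y_q)\big/\prod_{p'}(z-x_{p'})$, where the $y_q$ are the removable contents and the $x_{p'}$ the added contents), one gets $w_p=\prod_q(x_p-y_q)\big/\prod_{p'\ne p}(x_p-x_{p'})$. Only the two entries of $L_1$ are needed, and they reduce to $w_1=\frac{a(a+b+g)(a+b+c+f+g)}{(a+g)(a+b+f+g)(a+b+c+e+f+g)}$ and $w_3=\frac{cf(b+f+g)}{(a+b+f+g)(b+f)(c+e)}$, both positive.

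Finally I would eliminate $\beta$ from the three instances of \eqref{eq: isoclinic partition}. Since the prescribed signs are $+,-,+$, demanding that the $q=1$ and $q=3$ sums agree and that the $q=1$ and $q=2$ sums be opposite yields, after clearing denominators, two equations each prescribing the ratio $w_1/w_3$; substituting its known value from the previous step converts them into the polynomial identities $(a+b+g)(b+f)(c+e)=(a+g)f(a+b+c+e+f+g)$ and $(a+b+c+f+g)(b+f)(c+e)(2a+b+g)=c(b+2f+g)(a+g)(a+b+c+e+f+g)$. The heart of the argument, and the step I expect to be the main obstacle, is showing this pair is equivalent to \eqref{eq: three parts}; fortunately it telescopes. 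The first identity, using $(a+b+g)(b+f)-(a+g)f=b(a+b+f+g)$, collapses to $b(c+e)=(a+g)f$, which is the equation $e=\tfrac{(a+g)f}{b}-c$. Granting this, the factorization $b(a+b+f+g)+(a+g)f=(b+f)(a+b+g)$ reduces the second identity to $(a+b+c+f+g)f(2a+b+g)=c(b+2f+g)(a+b+g)$, and then $(b+2f+g)(a+b+g)-f(2a+b+g)=(b+g)(a+b+f+g)$ turns this into $c(b+g)=f(2a+b+g)$, i.e.\ $c=\tfrac{2af}{b+g}+f$. Every cancellation divides only by positive sums of the parameters $a,\dots,g$, so the implications reverse, giving the claimed equivalence.
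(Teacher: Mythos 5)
Your proposal is correct and follows essentially the same route as the paper: both specialize Corollary~\ref{cor: isoclinic partition} with $\delta=1$ to $L_1=\{\lambda_1,\lambda_3\}$, compute the same six axial distances and the same two ratios $d_{\lambda_p}/(n d_\mu)$, and then eliminate $\beta$ from the three sign conditions to arrive at the same pair of polynomial identities. The only differences are cosmetic: you obtain the dimension ratios via a residue/partial-fraction identity where the paper cancels hook lengths directly, and your telescoping factorizations replace the paper's intermediate observation that $c-f>0$; both reduce those identities to~\eqref{eq: three parts}.
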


\begin{remark}
Theorem~\ref{thm: three parts} produces at least one isoclinic partition (hence at least one totally symmetric real EITFF) for every choice of 
$a,f \in \mathbb{N}$ with $af > 1$.
Indeed, the following procedure yields positive integers 
$a,b,c,e,f,g \in \mathbb{N}$ that satisfy~\eqref{eq: three parts}:
	\begin{itemize}
	\item[\textsc{Step 1:}]
	Choose any $a,f \in \mathbb{N}$ with $af > 1$.
	\item[\textsc{Step 2:}]
	Choose any divisor $h \mid 2af$ with $h > 2$.
	\item[\textsc{Step 3:}]
	Choose any divisor $b \mid (a+h)f$ with $0 < b < \tfrac{h}{2}$.
	\item[\textsc{Step 4:}]
	Put $c := f + \tfrac{2af}{h}$, $e := \tfrac{ (a-b+h)f }{b} - c$, and $g := h-b$.
	\end{itemize}
In Step~2, such $h$ exists since $2af > 2$.
In Step~3, such $b$ exists since we could take ${b = 1}$.
To see that $e$ is a positive integer in Step~4, observe that $b < \tfrac{h}{2}$ implies $\tfrac{(a+h)f}{b} > \tfrac{2(a+h)f}{h} = f + c$, and so $c$ is less than the integer ${\tfrac{(a+h)f}{b} - f} = \tfrac{(a-b+h)f}{b}$.
Finally, $g > 0$ since $b < \tfrac{h}{2} < h$.
It is easy to check that the resulting parameters satisfy~\eqref{eq: three parts}.
Conversely, one can show that every choice of 
$a,b,c,e,f,g\in \mathbb{N}$ satisfying~\eqref{eq: three parts} occurs in this way.
\end{remark}

\begin{example}
In Theorem~\ref{thm: three parts}, 
taking $a=c=2$, $b=f=1$, and $e=g=3$ 
shows that $\mu = (7,7,4,3,3)$ is isoclinic.
Choosing $\delta = 1$ in Corollary~\ref{cor: isoclinic partition}
produces a totally symmetric $\operatorname{EITFF}_{\mathbb{R}}(d,r,n)$ with $d=10r$, $r=11\,660\,320\,672$, and $n=25$.
\end{example}

\begin{proof}[Proof of Theorem~\ref{thm: three parts}]
We apply Corollary~\ref{cor: isoclinic partition} with $\delta = 1$.
Enumerate $\lfloor \mu \rfloor = \{ (k_1,l_1),(k_2,l_2),(k_3,l_3) \}$ and $\mu^\uparrow = \{ \lambda_1,\lambda_2,\lambda_3,\lambda_4\}$ as in Figure~\ref{fig: label order}.
Taking $\delta = 1$ in~\eqref{eq: isoclinic partition}, the axial distances in the denominators are
\begin{align*}
D\big( \lambda_1 - \mu, (k_1,l_1) \big) &= a, & D\big( \lambda_3 - \mu, (k_1,l_1) \big) &= -(b+f+g), \\
D\big( \lambda_1 - \mu, (k_2,l_2) \big) &=  a+b+g, & D\big( \lambda_3 - \mu, (k_2,l_2) \big) &= -f, \\
D\big( \lambda_1 - \mu, (k_3,l_3) \big) &=  a+b+c+f+g, & D\big( \lambda_3 - \mu, (k_3,l_3) \big) &= c.
\end{align*}
By the hook length formula, each factor $\tfrac{ d_{\lambda} }{ n d_\mu }$ is the product of all hook lengths for $\mu$ divided by the product of all hook lengths for $\lambda$.
After applying the hook length formula and canceling common terms in the numerator and denominator, we find
\begin{align*}
\frac{d_{\lambda_1} }{ n d_\mu } &= \frac{ a(a+b+g)(a+b+c+f+g) }{ (a+g)(a+b+f+g)(a+b+c+e+f+g) }, \\[8 pt]
\frac{d_{\lambda_3} }{ n d_\mu } &= \frac{ c f (b+f+g) }{ (c+e)(b+f)(a+b+f+g) }.
\end{align*}
Notice the common factor of $a+b+f+g$ in the denominators.
With the elimination of this factor and~\eqref{eq: isoclinic partition} in mind, define
\[
y_p:= (a+b+f+g) \bigg( \frac{ d_{\lambda_1} }{n d_\mu} \cdot \frac{1}{ D\big( \lambda_1-\mu, (k_p,l_p) \big) } + \frac{ d_{\lambda_3} }{n d_\mu} \cdot \frac{1}{ D\big( \lambda_3-\mu, (k_p,l_p) \big) } \bigg),
\qquad
p \in [3].
\]
Explicitly, we compute
\begin{align*}
y_1 &=  \frac{ (a+b+g)(a+b+c+f+g) }{ (a+g)(a+b+c+e+f+g) } - \frac{ c f }{ (c+e)(b+f) } \\[8 pt]
y_2 &= \frac{ a(a+b+c+f+g) }{ (a+g)(a+b+c+e+f+g) } - \frac{ c (b+f+g) }{ (c+e)(b+f) } \\[ 8 pt]
y_3 &= \frac{ a(a+b+g) }{ (a+g)(a+b+c+e+f+g) } + \frac{ f (b+f+g) }{ (c+e)(b+f) }.
\end{align*}
By Corollary~\ref{cor: isoclinic partition}, $\mu$ is isoclinic if and only if $y_1 = y_3 = -y_2$.
After rearranging, we find $y_1 = y_3$ if and only if
\begin{equation}
\label{eq: three parts 1}
\frac{1}{(a+b+c+e+f+g)(a+g)} = \frac{f}{(a+b+g)(c+e)(b+f)},
\end{equation}
while $y_3 = -y_2$ if and only if 
\[
\frac{ (c-f)(b+f+g) }{ (c+e)(b+f) } = \frac{ a(2a+2b+c+f+2g) }{ (a+b+c+e+f+g)(a+g) }.
\]
In particular, if $\mu$ is isoclinic then 
$c - f > 0$.
Combining the above, we find $\mu$ is isoclinic if and only if 
$c-f > 0$, 
\eqref{eq: three parts 1} holds, and
\[
\frac{ (c-f)(b+f+g) }{ (c+e)(b+f) } = \frac{ af(2a+2b+c+f+2g) }{ (a+b+g)(c+e)(b+f) }.
\]
The latter equation may be rewritten as
\[
(c-f)\Big( a+b+g - \tfrac{a(c+f)}{c-f} \Big) \big( a+b+ f + g \big) = 0,
\]
and then (after canceling the positive factors and rearranging) as
\begin{equation}
\label{eq: three parts 3}
c = \frac{2af}{b+g} + f.
\end{equation}
Therefore, $\mu$ is isoclinic if and only if \eqref{eq: three parts 1} and \eqref{eq: three parts 3} hold.
After substituting \eqref{eq: three parts 3} and applying tedious simplification (perhaps with the aid of a computer algebra system), \eqref{eq: three parts 1} becomes
\[
e = \frac{(a+g)f}{b} - \frac{2af}{b+g} - f.
\]
Consequently, $\mu$ is isoclinic if and only if~\eqref{eq: three parts} holds.
\end{proof}

For isoclinic partitions with $m=4$ distinct parts, the algebra gets more difficult.
Things are simpler when the partition is symmetric, as below.

\begin{theorem}
\label{thm: four parts}
For $a,b,c,e \in \mathbb{N}$, the partition 
\[
\mu := \big({(a+b+c+e)}^a,{(a+b+c)}^b,{(a+b)}^c,a^e\big)\]
shown below (with some extra red boxes) is isoclinic if and only if 
\begin{equation}
\label{eq: four parts}
e = \frac{b^2}{c} + b.
\end{equation}

\begin{center}
\begin{tikzpicture}

\draw[thick] (0,0) -- (1,0) -- (1,1) -- (2,1) -- (2,2) -- (3,2) -- (3,3) -- (4,3) -- (4,4) -- (0,4) -- cycle;

\filldraw[thick,fill=ltblue] (1,0) rectangle (1-0.2,0+0.2);
\filldraw[thick,fill=ltblue] (2,1) rectangle (2-0.2,1+0.2);
\filldraw[thick,fill=ltblue] (3,2) rectangle (3-0.2,2+0.2);
\filldraw[thick,fill=ltblue] (4,3) rectangle (4-0.2,3+0.2);
\filldraw[thick,fill=ltpink] (1,1) rectangle (1+0.2,1-0.2);
\filldraw[thick,fill=ltpink] (3,3) rectangle (3+0.2,3-0.2);

\draw[<->] (-0.2,0+0.02) -- (-0.2,1-0.02);
\draw[<->] (-0.2,1+0.02) -- (-0.2,2-0.02);
\draw[<->] (-0.2,2+0.02) -- (-0.2,3-0.02);
\draw[<->] (-0.2,3+0.02) -- (-0.2,4-0.02);

\node[] at (-0.5,0.5) {$e$};
\node[] at (-0.5,1.5) {$c$};
\node[] at (-0.5,2.5) {$b$};
\node[] at (-0.5,3.5) {$a$};

\draw[<->] (0+0.02,4+0.2) -- (1-0.02,4+0.2);
\draw[<->] (1+0.02,4+0.2) -- (2-0.02,4+0.2);
\draw[<->] (2+0.02,4+0.2) -- (3-0.02,4+0.2);
\draw[<->] (3+0.02,4+0.2) -- (4-0.02,4+0.2);

\node[] at (0.5,4.5) {$a$};
\node[] at (1.5,4.5) {$b$};
\node[] at (2.5,4.5) {$c$};
\node[] at (3.5,4.5) {$e$};

\end{tikzpicture} 
\end{center}
\end{theorem}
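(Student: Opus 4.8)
The plan is to invoke Corollary~\ref{cor: isoclinic partition} and to lean heavily on the hypothesis that $\mu$ is symmetric ($\mu = \mu'$) in order to sidestep the hook length formula entirely. Since $\mu$ has $m = 4$ distinct parts, I would take $\delta = 0$, so that $L_0 = \{\lambda_2,\lambda_4\}$ involves only two layers. First I would record the geometry. The removable boxes $\lfloor \mu \rfloor = \{(k_1,l_1),\dotsc,(k_4,l_4)\}$ lie on superdiagonals $b+c+e,\ c,\ -c,\ -(b+c+e)$, while the added boxes $\lambda_2 - \mu$ and $\lambda_4 - \mu$ lie on superdiagonals $b+c$ and $-(b+c)$. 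Subtracting superdiagonals then yields the eight axial distances $D\big(\lambda_p - \mu,(k_q,l_q)\big)$ entering~\eqref{eq: isoclinic partition} for $p \in \{2,4\}$ and $q \in [4]$.

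The key simplification comes from symmetry. Because $\mu = \mu'$, the transpose map sends $\lambda_2 \mapsto \lambda_4$, so $d_{\lambda_2} = d_{\lambda_4}$ and the common factor $w := d_{\lambda_2}/(n d_\mu)$ pulls out of the sum in~\eqref{eq: isoclinic partition}; in particular the hook length formula is never needed. Writing $S_q$ for the left-hand side of~\eqref{eq: isoclinic partition} at $(k_q,l_q)$, the reflection of the superdiagonals forces $S_4 = -S_1$ and $S_3 = -S_2$ identically, so the cases $q = 3,4$ hold automatically once $q = 1,2$ do. Since one checks $S_1 < 0$ and $S_2 > 0$, the four constraints collapse to the single equation $-S_1 = S_2$, that is, after cancelling $w$,
\[
\frac{1}{e} + \frac{1}{2b+2c+e} = \frac{1}{b} - \frac{1}{b+2c}.
\]

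Finally I would clear denominators. Combining each side over a common denominator turns the displayed identity into $b(b+2c)(b+c+e) = c e (2b+2c+e)$, which is the quadratic
\[
c e^2 + (2c^2 - b^2) e - b(b+c)(b+2c) = 0.
\]
This factors as $\big(ce - b(b+c)\big)\big(e + b + 2c\big) = 0$, and since $e + b + 2c > 0$ for positive integers, the only admissible root is $ce = b(b+c)$, i.e.\ $e = \tfrac{b^2}{c} + b$, which is exactly~\eqref{eq: four parts}. The individual steps are routine; the one place demanding care is the reduction of the four constraints in~\eqref{eq: isoclinic partition} to a single equation, where I must verify both the antisymmetry $S_{5-q} = -S_q$ and the signs of $S_1,S_2$, so that a single $\beta \geq 0$ simultaneously realizes the alternating pattern $(-1)^{q+\delta}\beta$. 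The observation that the dimensional factors cancel---the genuine payoff of working with a symmetric partition---is what keeps the final algebra light, in contrast to the three-part case of Theorem~\ref{thm: three parts}.
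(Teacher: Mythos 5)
Your proposal is correct and follows essentially the same route as the paper: apply Corollary~\ref{cor: isoclinic partition} with $\delta=0$ and $L_0=\{\lambda_2,\lambda_4\}$, use $\lambda_4=\lambda_2'$ to cancel the dimension factors, compute the same eight axial distances, and reduce to the single equation $\tfrac{1}{e}+\tfrac{1}{2b+2c+e}=\tfrac{1}{b}-\tfrac{1}{b+2c}$, discarding the negative root of the resulting quadratic. Your treatment is if anything slightly more careful than the paper's (you verify the antisymmetry $S_{5-q}=-S_q$ and the signs of $S_1,S_2$ explicitly, and your extraneous root $e=-(b+2c)$ is the correct one, whereas the paper's stated second root appears to contain a typo), but the argument is the same.
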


\begin{remark}
Infinitely many positive integers 
$a,b,c,e$ 
satisfying~\eqref{eq: four parts} may be obtained as follows:
	\begin{itemize}
	\item[\textsc{Step 1:}]
	Choose any $b \in \mathbb{N}$.
	\item[\textsc{Step 2:}]
	Choose any positive divisor $c \mid b^2$.
	\item[\textsc{Step 3:}]
	Put $e:=\frac{b^2}{c} + b$.
	\item[\textsc{Step 4:}]
	Choose $a \in \mathbb{N}$ arbitrarily.
	\end{itemize}
Each choice yields an isoclinic partition, hence a totally symmetric and real EITFF.
\end{remark}

\begin{example}
In Theorem~\ref{thm: four parts}, taking $a=b=c=1$ and 
$e=2$
shows that $\mu = (5,3,2,1,1)$ is isoclinic.
Choosing $\delta = 0$ in Corollary~\ref{cor: isoclinic partition}
produces a totally symmetric $\operatorname{EITFF}_{\mathbb{R}}(42\,900,7700,13)$.
\end{example}

\begin{proof}[Proof of Theorem~\ref{thm: four parts}]
We apply Corollary~\ref{cor: isoclinic partition} with $\delta = 0$.
Enumerate 
\[
\lfloor \mu \rfloor = \{ (k_1,l_1),(k_2,l_2),(k_3,l_3),(k_4,l_4) \} 
\]
and $\mu^\uparrow = \{ \lambda_1,\lambda_2,\lambda_3,\lambda_4,\lambda_5\}$ as in Figure~\ref{fig: label order}.
Consider $L_0 = \{ \lambda_2,\lambda_4 \}$.
Since $\lambda_4 = \lambda_2'$, we have $d_{\lambda_4} = d_{\lambda_2}$, and the factor $\tfrac{ d_{\lambda} }{ n d_\mu }$ in~\eqref{eq: isoclinic partition} does not depend on $\lambda$.
Thus, $\mu$ is isoclinic if and only if
\[
\exists \, \gamma \geq 0 \text{ s.t.\ }
\frac{1}{D\big( \lambda_2 - \mu, (k_q, l_q) \big) }
+
\frac{1}{D\big( \lambda_4 - \mu, (k_q, l_q) \big) }
= (-1)^{q} \gamma
\qquad
\forall q \in [m].
\]
The axial distances in the denominators are
\begin{align*}
D\big( \lambda_2 - \mu, (k_1,l_1) \big) &= -e, & D\big( \lambda_4 - \mu, (k_1,l_1) \big) &= -(2b+2c+e), \\
D\big( \lambda_2 - \mu, (k_2,l_2) \big) &=  b, & D\big( \lambda_4 - \mu, (k_2,l_2) \big) &= -(b+2c), \\
D\big( \lambda_2 - \mu, (k_3,l_3) \big) &=  b+2c, & D\big( \lambda_4 - \mu, (k_3,l_3) \big) &= -b, \\
D\big( \lambda_2 - \mu, (k_4,l_4) \big) &=  2b+2c+e, & D\big( \lambda_4 - \mu, (k_4,l_4) \big) &= e.
\end{align*}
Substituting above, we find $\mu$ is isoclinic if and only if
\[
\frac{1}{e} + \frac{1}{2b+2c+e} = \frac{1}{b} - \frac{1}{b+2c}.
\]
After rearranging and solving for 
$e$, 
we conclude $\mu$ is isoclinic if and only if 
$e = \frac{b^2}{c} + b$ 
or 
$e = -\frac{b^2+2c^2+2bc}{c}$.
The latter is not possible since 
$b,c,e>0$.
\end{proof}

\begin{example}
Not every isoclinic partition with $m=4$ distinct parts takes the symmetric form in Theorem~\ref{thm: four parts}.
For example, $\mu = (12^2,7^3,5,4^3)$ is not symmetric, but one can check that it satisfies~\eqref{eq: isoclinic partition} and is therefore isoclinic.
More generally, for every integer $a \geq 1$, one can show the partition $\big( (12a)^2, (7a)^3, 5a, (4a)^3 \big)$ is isoclinic.
\end{example}

A symmetric isoclinic partition with $m=6$ distinct parts appears in~\cite{FIJM:ICASSP}.
The proof of its existence is complicated and involves elliptic curves.
We leave the following more general problem for future research.

\begin{problem}
Are there isoclinic partitions with $m$ distinct parts for every $m\geq 1$?
\end{problem}

\section*{Acknowledgments}
JWI was supported by a grant from the Simons Foundation, by NSF DMS 2220301, and by the Air Force Institute of Technology through the Air Force Office of Scientific Research Summer Faculty Fellowship Program, Contract Numbers FA8750-15-3-6003, FA9550-15-0001 and FA9550-20-F-0005.
JJ was supported by NSF DMS 2220320.
DGM was supported by NSF DMS 2220304.
The views expressed are those of the authors and do not reflect the official guidance or position of the United States Government, the Department of Defense,  the United States Air Force, or the United States Space Force.

\bigskip

\bibliographystyle{abbrv}
\bibliography{refs}

@article {CTX:15,
    AUTHOR = {Calderbank, Robert and Thompson, Andrew and Xie, Yao},
     TITLE = {On block coherence of frames},
   JOURNAL = {Appl. Comput. Harmon. Anal.},
  FJOURNAL = {Applied and Computational Harmonic Analysis. Time-Frequency
              and Time-Scale Analysis, Wavelets, Numerical Algorithms, and
              Applications},
    VOLUME = {38},
      YEAR = {2015},
    NUMBER = {1},
     PAGES = {50--71},
      ISSN = {1063-5203,1096-603X},
   MRCLASS = {94A12 (94A08)},
  MRNUMBER = {3273287},
MRREVIEWER = {Michael\ M.\ Dediu},
       DOI = {10.1016/j.acha.2014.03.003},
       URL = {https://doi.org/10.1016/j.acha.2014.03.003},
}

@book {CSST:10,
    AUTHOR = {Ceccherini-Silberstein, Tullio and Scarabotti, Fabio and
              Tolli, Filippo},
     TITLE = {Representation theory of the symmetric groups},
 PUBLISHER = {Cambridge University Press, Cambridge},
      YEAR = {2010},
     PAGES = {xvi+412},
      ISBN = {978-0-521-11817-0},
   MRCLASS = {20C30 (05E05 05E10)},
  MRNUMBER = {2643487},
MRREVIEWER = {David Hemmer},
       DOI = {10.1017/CBO9781139192361},
       URL = {https://doi.org/10.1017/CBO9781139192361},
}

@book {CSST:18,
    AUTHOR = {Ceccherini-Silberstein, Tullio and Scarabotti, Fabio and
              Tolli, Filippo},
     TITLE = {Discrete harmonic analysis},
 PUBLISHER = {Cambridge University Press, Cambridge},
      YEAR = {2018},
     PAGES = {xiii+573},
      ISBN = {978-1-107-18233-2},
   MRCLASS = {43-02 (05C75 11Txx 20-02 20G05 20K01 43A25)},
  MRNUMBER = {3791831},
       DOI = {10.1017/9781316856383},
       URL = {https://doi.org/10.1017/9781316856383},
}

@article {CHS:96,
    AUTHOR = {Conway, John H. and Hardin, Ronald H. and Sloane, Neil J. A.},
     TITLE = {Packing lines, planes, etc.: packings in {G}rassmannian
              spaces},
   JOURNAL = {Experiment. Math.},
  FJOURNAL = {Experimental Mathematics},
    VOLUME = {5},
      YEAR = {1996},
    NUMBER = {2},
     PAGES = {139--159},
      ISSN = {1058-6458,1944-950X},
   MRCLASS = {52C17 (65Y25)},
  MRNUMBER = {1418961},
MRREVIEWER = {Igor\ Rivin},
       URL = {http://projecteuclid.org/euclid.em/1047565645},
}

@unpublished{C:18,
	author={Jean Creignou},
	title={Constructions of {G}rassmannian Simplices},
	note = {arXiv:cs/0703036}
}

@article {DGM:86,
    AUTHOR = {Daubechies, Ingrid and Grossmann, A. and Meyer, Y.},
     TITLE = {Painless nonorthogonal expansions},
   JOURNAL = {J. Math. Phys.},
  FJOURNAL = {Journal of Mathematical Physics},
    VOLUME = {27},
      YEAR = {1986},
    NUMBER = {5},
     PAGES = {1271--1283},
      ISSN = {0022-2488,1089-7658},
   MRCLASS = {81D30 (46C10)},
  MRNUMBER = {836025},
MRREVIEWER = {Robin\ Hudson},
       DOI = {10.1063/1.527388},
       URL = {https://doi.org/10.1063/1.527388},
}

@article{DK:23,
	author = {Ulrich Dempwolff and William M. Kantor},
	title = {On 2-transitive sets of equiangular lines},
	journal = {Bull. Aust. Math. Soc.},
	volume = {107},
	number = {1},
	pages = {134-145},
	year = {2023},
	doi = {10.1017/S0004972722000661}
}

@article {DHST:08,
    AUTHOR = {Dhillon, I. S. and Heath, Jr., R. W. and Strohmer, T. and
              Tropp, J. A.},
     TITLE = {Constructing packings in {G}rassmannian manifolds via
              alternating projection},
   JOURNAL = {Experiment. Math.},
  FJOURNAL = {Experimental Mathematics},
    VOLUME = {17},
      YEAR = {2008},
    NUMBER = {1},
     PAGES = {9--35},
      ISSN = {1058-6458,1944-950X},
   MRCLASS = {51N15 (52C17 90C27)},
  MRNUMBER = {2410113},
       URL = {http://projecteuclid.org/euclid.em/1227031894},
}

@book {DM:96,
    AUTHOR = {Dixon, John D. and Mortimer, Brian},
     TITLE = {Permutation groups},
 PUBLISHER = {Springer-Verlag, New York},
      YEAR = {1996},
     PAGES = {xii+346},
      ISBN = {0-387-94599-7},
   MRCLASS = {20B05 (20-01 20B07)},
  MRNUMBER = {1409812},
MRREVIEWER = {Martin W. Liebeck},
       DOI = {10.1007/978-1-4612-0731-3},
       URL = {https://doi.org/10.1007/978-1-4612-0731-3},
}

@article {EKB:10,
    AUTHOR = {Eldar, Yonina C. and Kuppinger, Patrick and B\"{o}lcskei,
              Helmut},
     TITLE = {Block-sparse signals: uncertainty relations and efficient
              recovery},
   JOURNAL = {IEEE Trans. Signal Process.},
  FJOURNAL = {IEEE Transactions on Signal Processing},
    VOLUME = {58},
      YEAR = {2010},
    NUMBER = {6},
     PAGES = {3042--3054},
      ISSN = {1053-587X,1941-0476},
   MRCLASS = {94A12},
  MRNUMBER = {2730246},
       DOI = {10.1109/TSP.2010.2044837},
       URL = {https://doi.org/10.1109/TSP.2010.2044837},
}

@article {ET:06,
    AUTHOR = {Et-Taoui, B.},
     TITLE = {Equi-isoclinic planes of {E}uclidean spaces},
   JOURNAL = {Indag. Math. (N.S.)},
  FJOURNAL = {Koninklijke Nederlandse Akademie van Wetenschappen.
              Indagationes Mathematicae. New Series},
    VOLUME = {17},
      YEAR = {2006},
    NUMBER = {2},
     PAGES = {205--219},
      ISSN = {0019-3577},
   MRCLASS = {51M05},
  MRNUMBER = {2321381},
       DOI = {10.1016/S0019-3577(06)80016-9},
       URL = {https://doi.org/10.1016/S0019-3577(06)80016-9},
}

@article {ETR:09,
    AUTHOR = {Et-Taoui, Boumediene and Rouyer, Jo\"{e}l},
     TITLE = {On {$p$}-tuples of equi-isoclinic 3-spaces in the {E}uclidean
              space},
   JOURNAL = {Indag. Math. (N.S.)},
  FJOURNAL = {Koninklijke Nederlandse Akademie van Wetenschappen.
              Indagationes Mathematicae. New Series},
    VOLUME = {20},
      YEAR = {2009},
    NUMBER = {4},
     PAGES = {491--525},
      ISSN = {0019-3577},
   MRCLASS = {51M05},
  MRNUMBER = {2776896},
       DOI = {10.1016/S0019-3577(09)80022-0},
       URL = {https://doi.org/10.1016/S0019-3577(09)80022-0},
}

@book {FT:64,
    AUTHOR = {Fejes T\'{o}th, L.},
     TITLE = {Regular figures},
 PUBLISHER = {Macmillan, New York},
      YEAR = {1964},
     PAGES = {xi+339},
   MRCLASS = {50.90 (52.10)},
  MRNUMBER = {165423},
MRREVIEWER = {F.\ A.\ Sherk},
}

@article{FGLI:24,
	author = {Matthew Fickus and Enrique Gomez-Leos and Joseph W. Iverson},
	title = {{R}adon-{H}urwitz {G}rassmannian codes},
	JOURNAL = {IEEE Trans. Inform. Theory},
	VOLUME = {71},
      YEAR = {2025},
    NUMBER = {4},
     PAGES = {3203--3213},
      ISSN = {0018-9448,1557-9654},
   MRCLASS = {42C15 (94A12)},
  MRNUMBER = {4882466},
       DOI = {10.1109/tit.2025.3536324},
       URL = {https://doi.org/10.1109/tit.2025.3536324}
}

@article{FIJM:22,
	author = {Matthew Fickus and Joseph W. Iverson and John Jasper and Dustin G. Mixon},
	title = {Equi-isoclinic subspaces, covers of the complete graph, and complex conference matrices},
	JOURNAL = {Linear Algebra Appl.},
  FJOURNAL = {Linear Algebra and its Applications},
    VOLUME = {702},
      YEAR = {2024},
     PAGES = {240--249},
      ISSN = {0024-3795,1873-1856},
   MRCLASS = {05B20 (05B40 42C15 52C17)},
  MRNUMBER = {4792486},
MRREVIEWER = {Hieu\ Ha Van},
       DOI = {10.1016/j.laa.2024.08.002},
       URL = {https://doi.org/10.1016/j.laa.2024.08.002}
}

@article{FIJM:23,
	author = {Matthew Fickus and Joseph W. Iverson and John Jasper and Dustin G. Mixon},
	title = {Harmonic {G}rassmannian codes},
	journal = {Appl. Comput. Harmon. Anal.},
	volume = {65},
	year = {2023},
	pages = {1-39}
}

@INPROCEEDINGS{FIJM:ICASSP,
  author={Fickus, Matthew and Iverson, Joseph W. and Jasper, John and Mixon, Dustin G.},
  booktitle={ICASSP 2022 - 2022 IEEE International Conference on Acoustics, Speech and Signal Processing (ICASSP)}, 
  title={A Note on Totally Symmetric Equi-Isoclinic Tight Fusion Frames}, 
  year={2022},
  volume={},
  number={},
  pages={8987-8991},
  doi={10.1109/ICASSP43922.2022.9746835}}

@unpublished{FMW:21,
    author = {Fickus, Matthew and Mayo, Benjamin R. and Watson, Cody E.},
    title = {Certifying the novelty of equichordal tight fusion frames},
    note = {arXiv:2103.03192}
}

@unpublished{FM:15,
 	author={Fickus, Matthew and Mixon, Dustin G.},
	title = {Tables of the existence of equiangular tight frames},
	year = {2015},
	note = {arXiv:1504.00253}
 }

@manual{gap,
  Author       = {{The GAP Group}},
  Title        = {{GAP} -- {G}roups, {A}lgorithms, and {P}rogramming, Version 4.11.1},
  note         = {{\url{https://www.gap-system.org}}},
  Year         = {2021},
}

@article {GH:92,
    AUTHOR = {Godsil, C. D. and Hensel, A. D.},
     TITLE = {Distance regular covers of the complete graph},
   JOURNAL = {J. Combin. Theory Ser. B},
  FJOURNAL = {Journal of Combinatorial Theory. Series B},
    VOLUME = {56},
      YEAR = {1992},
    NUMBER = {2},
     PAGES = {205--238},
      ISSN = {0095-8956,1096-0902},
   MRCLASS = {05E30 (05E20)},
  MRNUMBER = {1186756},
MRREVIEWER = {Arjeh\ M.\ Cohen},
       DOI = {10.1016/0095-8956(92)90019-T},
       URL = {https://doi.org/10.1016/0095-8956(92)90019-T},
}

@article {IJM:20,
    AUTHOR = {Iverson, Joseph W. and Jasper, John and Mixon, Dustin G.},
     TITLE = {Optimal line packings from finite group actions},
   JOURNAL = {Forum Math. Sigma},
  FJOURNAL = {Forum of Mathematics. Sigma},
    VOLUME = {8},
      YEAR = {2020},
      pages = {e6},
      ISSN = {2050-5094},
   MRCLASS = {20C15 (20B99 42C15 52C17)},
  MRNUMBER = {4061976},
MRREVIEWER = {Chuanming\ Zong},
       DOI = {10.1017/fms.2019.48},
       URL = {https://doi.org/10.1017/fms.2019.48},
}

@article {IKM:21,
    AUTHOR = {Iverson, Joseph W. and King, Emily J. and Mixon, Dustin G.},
     TITLE = {A note on tight projective 2-designs},
   JOURNAL = {J. Combin. Des.},
  FJOURNAL = {Journal of Combinatorial Designs},
    VOLUME = {29},
      YEAR = {2021},
    NUMBER = {12},
     PAGES = {809--832},
      ISSN = {1063-8539},
   MRCLASS = {05B10 (42C15)},
  MRNUMBER = {4373595},
MRREVIEWER = {Jos\'{e} A. Armario},
       DOI = {10.1002/jcd.21804},
       URL = {https://doi.org/10.1002/jcd.21804},
}

@article{IM:DTI,
    AUTHOR = {Iverson, Joseph W. and Mixon, Dustin G.},
     TITLE = {Doubly transitive lines {I}: {H}igman pairs and roux},
   JOURNAL = {J. Combin. Theory Ser. A},
  FJOURNAL = {Journal of Combinatorial Theory. Series A},
    VOLUME = {185},
      YEAR = {2022},
     PAGES = {105540},
      ISSN = {0097-3165},
   MRCLASS = {42C15 (05C70)},
  MRNUMBER = {4321310},
       DOI = {10.1016/j.jcta.2021.105540},
       URL = {https://doi.org/10.1016/j.jcta.2021.105540},
}

@article{IM:DTII,
	author = {Joseph W. Iverson and Dustin G. Mixon},
	title = {Doubly transitive lines {II}: {A}lmost simple symmetries},
	journal = {Algebr. Comb.},
	fjournal = {Algebraic Combinatorics},
	volume = {7},
	pages = {37-76},
	year = {2024}
}

@INPROCEEDINGS{K:19,
  author={King, Emily J.},
  booktitle={2019 13th International conference on Sampling Theory and Applications (SampTA)}, 
  title={2- and 3-Covariant Equiangular Tight Frames}, 
  year={2019},
  volume={},
  number={},
  pages={},
  doi={10.1109/SampTA45681.2019.9030839}}

@article {LS:73a,
    AUTHOR = {Lemmens, P. W. H. and Seidel, J. J.},
     TITLE = {Equi-isoclinic subspaces of {E}uclidean spaces},
      NOTE = {Nederl. Akad. Wetensch. Proc. Ser. A {\bf 76}},
   JOURNAL = {Indag. Math.},
  FJOURNAL = {},
    VOLUME = {35},
      YEAR = {1973},
     PAGES = {98--107},
   MRCLASS = {50BXX},
  MRNUMBER = {313926},
MRREVIEWER = {H.\ W.\ Guggenheimer},
}

@article {RBSC:04,
    AUTHOR = {Renes, Joseph M. and Blume-Kohout, Robin and Scott, A. J. and
              Caves, Carlton M.},
     TITLE = {Symmetric informationally complete quantum measurements},
   JOURNAL = {J. Math. Phys.},
  FJOURNAL = {Journal of Mathematical Physics},
    VOLUME = {45},
      YEAR = {2004},
    NUMBER = {6},
     PAGES = {2171--2180},
      ISSN = {0022-2488},
   MRCLASS = {81P15},
  MRNUMBER = {2059685},
       DOI = {10.1063/1.1737053},
       URL = {https://doi.org/10.1063/1.1737053},
}

@book {S:77,
    AUTHOR = {Serre, Jean-Pierre},
     TITLE = {Linear representations of finite groups},
 PUBLISHER = {Springer-Verlag, New York-Heidelberg},
      YEAR = {1977},
     PAGES = {x+170},
      ISBN = {0-387-90190-6},
   MRCLASS = {20CXX},
  MRNUMBER = {450380},
MRREVIEWER = {W.\ Feit},
}

@article {T:04,
    AUTHOR = {Tropp, Joel A.},
     TITLE = {Greed is good: algorithmic results for sparse approximation},
   JOURNAL = {IEEE Trans. Inform. Theory},
  FJOURNAL = {Institute of Electrical and Electronics Engineers.
              Transactions on Information Theory},
    VOLUME = {50},
      YEAR = {2004},
    NUMBER = {10},
     PAGES = {2231--2242},
      ISSN = {0018-9448,1557-9654},
   MRCLASS = {94A12 (41A30 90C90)},
  MRNUMBER = {2097044},
       DOI = {10.1109/TIT.2004.834793},
       URL = {https://doi.org/10.1109/TIT.2004.834793},
}

@article{V:06,
	AUTHOR = {Vershik, A. M.},
     TITLE = {A new approach to the representation theory of the symmetric
              groups. {III}. {I}nduced representations and the
              {F}robenius-{Y}oung correspondence},
   JOURNAL = {Mosc. Math. J.},
  FJOURNAL = {Moscow Mathematical Journal},
    VOLUME = {6},
      YEAR = {2006},
    NUMBER = {3},
     PAGES = {567--585, 588},
      ISSN = {1609-3321,1609-4514},
       DOI = {10.17323/1609-4514-2006-6-3-567-585},
       URL = {https://doi.org/10.17323/1609-4514-2006-6-3-567-585}
}

@book {W:18,
    AUTHOR = {Waldron, Shayne F. D.},
     TITLE = {An introduction to finite tight frames},
    SERIES = {Applied and Numerical Harmonic Analysis},
 PUBLISHER = {Birkh\"{a}user/Springer, New York},
      YEAR = {2018},
     PAGES = {xx+587},
      ISBN = {978-0-8176-4814-5; 978-0-8176-4815-2},
   MRCLASS = {42-02 (20C15 41A58 42C15 42C40)},
  MRNUMBER = {3752185},
MRREVIEWER = {Bernhard G. Bodmann},
       DOI = {10.1007/978-0-8176-4815-2},
       URL = {https://doi.org/10.1007/978-0-8176-4815-2},
}

@article{Welch:74,
  title={Lower bounds on the maximum cross correlation of signals},
  author={Welch, Lloyd},
  journal={{IEEE} Trans. Inform. Theory},
 fjournal={IEEE Transactions on Information theory},
  volume={20},
  number={3},
  pages={397--399},
  year={1974},
  publisher={IEEE}
}

@phdthesis {Z:99,
    AUTHOR = {Zauner, Gerhard},
     TITLE = {Quantendesigns---Grundz\"{u}ge einer nichtkommutativen Designtheorie},
 school = {University of Vienna},
 address={Vienna, Austria},
      YEAR = {1999}
}

\end{document}